\definecolor{darkblue}{rgb}{0.0,0,0.7}
\newcommand{\darkblue}{\color{darkblue}}
\definecolor{darkred}{rgb}{0.68,0,0}
\newcommand{\darkred}{\color{darkred}}
\definecolor{darkgreen}{rgb}{0,.38,0}
\newcommand{\darkgreen}{\color{darkgreen}}
\newcommand{\defn}[1]{\emph{\darkblue #1}}
\newcommand{\defna}[1]{\emph{\darkred #1}}
\newcommand{\defnb}[1]{\emph{\darkblue #1}}
\newcommand{\defng}[1]{\emph{\darkgreen #1}}
\setlist[enumerate]{
	label=\textnormal{({\roman*})},
	ref={\roman*}}
\def\th@plain{%
	\thm@notefont{}
	\itshape 
}
\def\th@definition{%
	\thm@notefont{}
	\normalfont 
}
\def\fdsy@scale{1}
\newcommand\fdsy@mweight@normal{Book}
\newcommand\fdsy@mweight@small{Book}
\newcommand\fdsy@bweight@normal{Medium}
\newcommand\fdsy@bweight@small{Medium}
\DeclareFontFamily{U}{FdSymbolB}{}
\DeclareFontShape{U}{FdSymbolB}{m}{n}{
	<-7.1> s * [\fdsy@scale] FdSymbolB-\fdsy@mweight@small
	<7.1-> s * [\fdsy@scale] FdSymbolB-\fdsy@mweight@normal
}{}
\DeclareFontShape{U}{FdSymbolB}{b}{n}{
	<-7.1> s * [\fdsy@scale] FdSymbolB-\fdsy@bweight@small
	<7.1-> s * [\fdsy@scale] FdSymbolB-\fdsy@bweight@normal
}{}
\DeclareSymbolFont{fdrelations}{U}{FdSymbolB}{m}{n}
\DeclareMathSymbol{\lescc}{\mathrel}{fdrelations}{66}
\newtheorem{thm}{Theorem}[section]
\newtheorem{claim}[thm]{Claim}
\newtheorem{cor}[thm]{Corollary}
\newtheorem{prop}[thm]{Proposition}
\theoremstyle{definition}
\newtheorem{rem}[thm]{Remark}
\numberwithin{figure}{section}
\numberwithin{equation}{section}
\def\emp{\nothing}
\def\nn{\mathbb N}
\def\rr{\mathbb R}
\def\qqq{\mathbb Q}
\def\sm{\smallsetminus}
\def\Om{\Omega}
\def\la{\lambda}
\def\al{\alpha}
\def\be{\beta}
\def\vp{\varphi}
\def\cC{\mathcal C}
\def\cA{\mathcal A}
\def\cF{\mathcal F}
\def\cL{\mathcal L}
\def\cP{\mathcal P}
\def\cQ{\mathcal Q}
\def\ssu{\subset}
\def\<{\langle}
\def\>{\rangle}
\def\rK{{\text{\sc {K}}}}
\def\0{{\mathbf 0}}
\def\LL{{\mathbb{L}}}
\def\nothing{\varnothing}
\def\.{\hskip.06cm}
\def\ts{\hskip.03cm}
\def\bba{\textbf{\textit{a}}}
\def\bbz{\textbf{\textit{z}}}
\def\eeb{{\text{\bf e}}}
\def\di{{\small{\ts\diamond\ts}}}
\def\ze{{\zeta}}
\newcommand{\maj}{\mathrm{maj}}
\newcommand{\PP}{\operatorname{PP}}
\newcommand{\SPP}{\operatorname{SPP}}
\newcommand{\RPP}{\operatorname{RPP}}
\newcommand{\SSYT}{\operatorname{SSYT}}
\newcommand{\SYT}{\operatorname{{\rm SYT}}}
\newcommand{\ges}{\operatorname{{\geqslant}}}
\def\nin{\noindent}
\def\SP{{\textsc{\#P}}}
\def\SP{{\textsc{\#{}P}}}
\def\aF{\textrm{F}}
\def\aFr{\textrm{\em F}}
\def\leqs{\leqslant}
\def\A{A}
\def\fe{f}
\def\RGF{{{\cF}}}
\DeclareMathOperator{\Ec}{\mathcal{E}} 
\DeclareMathOperator{\Rb}{\mathbb{R}}
\def\qb{\textbf{\textit{q}}{}}
\newcommand{\en}{{f}}
\def\rb{\textbf{\textit{r}}\hskip-0.03cm{}}
\DeclareMathOperator{\lqr}{\langle\qb{}\., \rb \. \rangle}
\DeclareMathOperator{\db}{\mathbf{d}}
\title[Multivariate correlation inequalities]{Multivariate correlation inequalities for $P$-partitions}
\date{\today}
\author[Swee Hong Chan \and Igor Pak]{\ Swee Hong Chan$^{\star}$ \ \. \and \ \. Igor~Pak$^{\di}$}
\thanks{\thinspace ${\hspace{-.45ex}}^\star$Department of Mathematics,
Rutgers University, Piscataway, NJ, 08854.
 \.  Email: \ts \texttt{sweehong.chan@rutgers.edu}}
\thanks{\thinspace ${\hspace{-.66ex}}^\di$Department of Mathematics,
UCLA, Los Angeles, CA, 90095. \.  Email: \ts \texttt{{pak}@math.ucla.edu}}
\begin{document}

\begin{abstract}
Motivated by the Lam--Pylyavskyy inequalities for Schur functions,
we give a far reaching multivariate generalization of Fishburn's
correlation inequality for the number of linear extensions of posets.
We then give a multivariate generalization of the Daykin--Daykin--Paterson
inequality proving log-concavity of the order polynomial of a poset.
We also prove a multivariate $P$-partition version of the cross-product
inequality by Brightwell--Felsner--Trotter.   The proofs are based on
a multivariate generalization of the Ahlswede--Daykin inequality.
\end{abstract}

	\maketitle

\section{Introduction} \label{s:intro}

Arguably, \defng{linear extensions} play as much a central role in poset theory
as \defng{standard Young tableaux} \ts in algebraic combinatorics.  While the former
combinatorial objects obviously generalize the latter, this connection is yet
to be fully explored.  In fact, the development in the two areas seem to move
along parallel tracks as we explain below.

The story of this paper is an interplay between these two areas of combinatorics,
which makes both the motivation and presentation of the results
somewhat less accessible.  To mitigate this, we include two separate (and almost
completely non-overlapping) versions of the introduction addressing audiences
with different background (see also~$\S$\ref{ss:finrem-origin}).

The results themselves are postponed to later sections and assume fluency
in both areas.  While the reader may choose to read only the results
that are closer to their interests, reading both sides of the story
can enhance the experience.  To help navigate between the areas,
we include detailed notation and some background in Section~\ref{s:notation}.

\smallskip

\subsection*{Poset theoretic perspective}
Our first result (Theorem~\ref{t:main-LE}) is a self-dual generalization of
the remarkable \defna{Fishburn's correlation inequality} \ts (Theorem~\ref{t:Fish})
for the numbers of linear extensions of poset order ideals.
We  further extend it  to a correlation inequality for order polynomials,
and then even further to their $q$-analogues and multivariate $\qb$-analogues
(Theorems~\ref{t:main-OP-multi} and~\ref{t:main-K}).
To understand the proofs it is worth examining the historical background
and motivation behind earlier results.  

Following up on the works by
Harris (1960) and Kleitman (1966), Fortuin--Kasteleyn--Ginibre introduced
the celebrated \defn{FKG inequality} \ts \cite{FKG}.  This correlation inequality
was further generalized in a series of papers, most notably by Ahlswede--Daykin
\cite{AD}, who proved a very general \defn{AD inequality} \ts (Theorem~\ref{thm:AD}),
which is also called the \defng{four functions theorem} \cite[$\S$6.1]{AS}.
This result is so general that it has an elementary albeit somewhat involved
proof by induction (ibid.).  For the many followup investigations of
correlation inequalities, see e.g.\ \cite[$\S$15]{AB}, \cite[$\S$5]{Pak-what}, and
earlier overviews in \cite{FS,Gra,Win}.

In a direct application to posets, Shepp \cite{She} was able to use the FKG
inequality and a clever limit argument to prove the \defn{XYZ inequality} 
(see e.g.\ \cite[$\S$6.4]{AS}),
the most remarkable correlation inequality for linear extensions of posets, conjectured
earlier by Rival and others.  This brings us to Fishburn \cite{Fish1},
who established the \defn{Fishburn's inequality} \ts (Theorem~\ref{t:Fish})
as a tool in his proof of the strict version of the XYZ inequality.
We note that Shepp's limit argument does not imply  the strict version,
so Fishburn's proof uses the AD inequality instead.

Motivated by enumerative applications and Fishburn's work, Bj\"orner \cite{Bjo11}
proved the \defn{$q$-FKG inequality} \ts generalizing the FKG inequality.
Christofides \cite{Chri} then found the \defn{$q$-AD~inequality}, answering
Bj\"orner's question.  In a joint work with Panova \cite{CPP},
we employed Bj\"orner's $q$-FKG inequality to obtain $q$-analogues of
inequalities for order polynomials of interest in enumerative combinatorics.

In our most recent paper \cite{CP3}, we find several correlation inequalities
whose proof required the \emph{combinatorial atlas} technique and does not have
a natural $q$-analogue.  Among other results, we proved a series of upper bounds
on correlation inequalities (when they are written in the form of a ratio $\ge 1$),
in some cases serving as a counterpart to the Fishburn's inequality.

The generality of our upper bounds in \cite{CP3} and the self-dual nature
of related results on Young tableaux naturally leads to our self-dual generalization of
Fishburn's inequality.  Just like the original proofs by Shepp and Fishburn,
our proof is via posets' order polynomials, which naturally arise in this
setting.  Curiously, to prove our main theorem (Theorem~\ref{t:main-OP-multi}),
we use a multivariate generalization (Theorem~\ref{thm:multi-q-AD}) of Christofides's
$q$-AD inequality.

At this point one would want to compare our results (notably Theorem~\ref{t:main-K}),
to those by Lam and Pylyavskyy \cite{LP07}, which are closely related and partly
inspired this paper.  They also prove a multivariate correlation inequality
for order preserving maps on posets, which in some cases coincides with ours
(cf.\ Corollary~\ref{c:LP-RPP2} and Remark~\ref{r:LP-gen}).  Unfortunately,
their meet and join operations on order ideals are noncommutative and are
therefore distinct from the more traditional definitions that we use.  Thus,
while the results in \cite{LP07} might appear similar and even more general
at a first glance (partially because they use the same notation),
in full generality the similarity is misleading.

Now, Lam--Pylyavskyy's \defng{Cell Transfer Theorem} \cite[Thm~3.6]{LP07} has
a more general setting given by certain functions on poset's Hasse diagram.
When it comes to skew Young diagrams, this allows the authors to recover
the same reverse plane partitions results that we do, as well as semistandard
Young tableaux results.  We also recover their correlation inequality for
Schur functions by making additional arguments (Section~\ref{s:Schur}).

To summarize the comparison, neither result implies the other.
Our meet and join notions are more standard leading to a self-dual
generalization of Fishburn's inequality using a more standard tool
(generalized AD inequality). On the other hand, the Lam--Pylyavskyy's
ad hoc definitions allow them to recover the same Young tableaux results
with an advantage of their proof giving an explicit combinatorial
injection (cf.~$\S$\ref{ss:finrem-SP}).

We give two applications of the multivariate AD~inequality to poset
inequalities.  First, we prove a multivariate cross-product inequality
for order preserving maps on posets (Theorem~\ref{thm:CPC-OP}),
giving a variation on the \defng{cross-product inequality} \ts by
Brightwell--Felsner--Trotter \cite{BFT}.  This result is new
even for the usual (unweighted) setting.  Note that the
cross-product inequality remains a conjecture in full
generality (Remark~\ref{r:CPC}).

Finally, we give a multivariate extension of the \defn{Daykin--Daykin--Paterson
\ts {\rm (DDP)} inequality} (Theorem~\ref{t:DDP}), which was originally
conjectured by Graham in \cite{Gra},
and proved in \cite{DDP} by an ingenuous direct injection.\footnote{This
injection eluded us in the first version \cite{CPP}, when we were not aware
of \cite{DDP} and proved an asymptotic version of the DDP~inequality
which we called \emph{Graham's conjecture}.}
In fact, Graham originally suggested that the DDP inequality could be
proved by the AD~inequality (see Remark~\ref{r:Graham}).  We provide
such a proof in~$\S$\ref{ss:DDP-old}.  Then, motivated by the structure
of the multivariate AD~inequality, we give a multivariate generalization
of the DDP inequality (Theorem~\ref{t:DDP-multi}).   We conclude with
a multivariate log-concavity of the order polynomial
(Corollary~\ref{c:DPP-multi}), generalizing our recent joint
result with Panova \cite{CPP}.

\smallskip

\subsection*{Algebraic combinatorics perspective}
Our main result is a generalization of the remarkable \defna{Lam--Pylyavskyy
correlation inequality} \ts (Theorem~\ref{t:P-Schur}) for Schur functions
and reverse plane partitions to a self-dual (multivariate) correlation
inequalities for general posets (Theorems~\ref{t:main-OP-multi}
and~\ref{t:main-K}).  Specializations of our main result give
correlation inequalities for $q$-analogues of the number standard Young
tableaux for both straight and skew shapes, which generalize Bj\"orner's
inequality (Corollary~\ref{c:Bjo}).

To understand the proofs it is worth examining the historical background
and motivation behind earlier results.  The study of inequalities for the
symmetric functions goes back to Newton (1707), who proved the
log-concavity \. $\eeb_k^2 \, \ge \, \eeb_{k+1} \. \eeb_{k-1}$ \.
of elementary symmetric polynomials \. $\eeb_k(x_1,\ldots,x_n)$, for all \ts $x_i \in \rr$.
We refer to \cite{Mac,EC} for a thorough treatment of symmetric functions.

Over the past century, symmetric functions have received a great deal of attention
due to their connections and applications in representation theory, as well as a
host of other fields (enumerative algebraic geometry, integrable probability, etc.)
With many identities came inequalities, which were often proved by tools from other
areas.  We refer to \cite{Bre,Bre2,Sta2} for somewhat dated surveys
and to \cite{Bra,Huh} for a more recent overviews of positivity results.

Some recent highlights include inequalities for values of Schur functions
conjectured by Cuttler--Greene--Skandera \cite{CGS} and proved by Sra~\cite{Sra},
the log-concavity of normalized Schur polynomials by
Huh--Matherne--M\'esz\'aros--St.~Dizier \cite{HMMS}, and the
\emph{Schur positivity correlation inequality} \ts
by Lam--Postnikov--Pylyavskyy \cite{LPP07} (see Remark~\ref{r:LPP}).

Building on the ideas which go back to MacMahon (1915), Stanley introduced in
his thesis \cite{Sta-PP} the \defn{$P$-partition theory}, which is closely
related to the study of the order polynomial of posets, and to the \defn{major index}
\ts statistics on linear extensions \cite[$\S$3.15]{EC}.  Motivated by applications
to plane partitions, the study of $P$-partitions became an important subject of
its own.  The order polynomial of a poset turned out to coincide with the
Ehrhart polynomial of the order polytope (see e.g.\ \cite[$\S$4.6.2]{EC}).

The Lam--Pylyavskyy paper \cite{LP07} uses Stanley's $P$-partition theory to
obtain inequalities for the numbers of $P$-partitions with multivariate weights.
The authors presented an explicit combinatorial injection called the \defng{cell
transfer}, which proves inequalities in a very general setting.
As the main application they succeeded in establishing the monomial positivity
correlation inequality for Schur functions (Theorem~\ref{t:P-Schur}),
which was soon overshadowed by the stronger Schur positivity LPP correlation
inequality mentioned above. Their approach also extends to monotonicity of
quasisymmetric functions which arise from $P$-partitions \cite{LP08}.

In this paper, we take the core part of the Lam--Pylyavskyy general inequality
and generalize it in the direction which is more natural from the poset theoretic point
of view (Theorem~\ref{t:main-K}).  Since multivariate inequalities are uncommon in poset
theory, we give a multivariate extension of the AD~inequality, an important tool in the area.
We then show that our multivariate extension is strong enough to also imply the
above mentioned Lam--Pylyavskyy's monomial positivity.

Finally, we show that this multivariate approach can be used to prove new inequalities
for general posets. Notably, we prove a new cross-product inequality
(Theorem~\ref{thm:CPC-OP}), and extend DDP and CPP log-concave inequalities
for general posets (Theorem~\ref{t:DDP-multi} and Corollary~\ref{c:DPP-multi}).

\smallskip

\subsection*{Paper structure}
We start with a lengthy Section~\ref{s:notation} with the background
in both algebraic combinatorics and poset theory.  We encourage
the reader not to skip this section as we make some minor changes
in definitions and standard notation to accommodate partly contradictory
traditions in the two areas.

In the next two sections we present both known and new results in the
order of increasing generality, pointing out the implications between
results along the way.  These implications tend to be quick and
straightforward, and are included for clarity.  In general, we opted
for a complete and detailed presentation of all corollaries and special
cases as a way to fully explain connections between the results.

In a short Section~\ref{s:main}, we present results only about linear extensions
and standard Young tableaux.  While the results are easy consequences of
the $P$-partition results in Section~\ref{s:P-part}, the idea is to make
the linear extension's story completely self-contained.  Our most general
results (Theorems~\ref{t:main-OP-multi} and~\ref{t:main-K})
are given at the end of Section~\ref{s:P-part}.

We then proceed to the proofs. In Section~\ref{s:AD}, we give a self-contained
simple proof of the generalized Fishburn's inequality (Theorem~\ref{t:main-LE}) 
deducing it from its order polynomial generalization (Theorem~\ref{t:main-OP}), 
which is proved via the AD~inequality (Theorem~\ref{thm:AD}).  
This proof is based on Fishburn's approach \cite{Fish},
and is included here as a gentle introduction to our multivariate version.

In Section~\ref{s:AD-multi}, we present the multivariate AD~inequality
(Theorem~\ref{thm:multi-q-AD}).  This is the main tool of the paper,
which we use to prove our main results in a short Section~\ref{s:proof-main}.
In Section~\ref{s:Schur}, we give a new proof of the Lam--Pylyavskyy
inequality for Schur functions, also via the multivariate AD~inequality.

In Section~\ref{s:DDP}, we give a new proof and then a multivariate
generalization (Theorem~\ref{t:DDP-multi}) of the DPP~inequality.
We follow this with the cross-product inequality for $P$-partitions
(Theorem~\ref{thm:CPC-OP}) in Section~\ref{s:CPC}. We conclude
with final remarks and open problems in Section~\ref{s:finrem}.

\medskip

\section{Background, definitions and notation} \label{s:notation}

\subsection{Basic notations}\label{ss:notation-basic}
We use \ts $\nn=\{0,1,2,\ldots\}$, \ts $\nn_{\ge 1} = \{1,2,\ldots\}$,
\ts $[n]=\{1,2,\ldots,n\}$ \ts and \ts $\rr_+ = \{x\ge 0\}$.
To simplify the notation, for an element \ts $a\in X$,  we use
\ts $X-a$ \ts to denote the subset \ts $X\sm \{a\}$.  Similarly,
for a subset \ts $Y\subseteq X$, we write \ts $X-Y$ \ts in place
of more general \ts $X \sm Y$.

For variables \ts $\qb=(q_1,\ldots,q_n)$ \ts and a vector \ts $\bba=(a_1,\ldots,a_n)\in \nn^n$,
we write \ts $\qb^\bba:= q_1^{a_1}\ts \cdots \ts q_n^{a_n}$. For a polynomial \ts
$F\in \rr[z_1,\ldots,z_n]$, we write that \. $F\ge 0$ \. if \.
$F(z_1,\ldots,z_n)\ge 0$ \. for all \ts $\bbz\in \rr^n$.  For two polynomials
$F, G\in \rr[z_1,\ldots,z_n]$, we write \ts $F\ge G$ \ts if \ts $F-G\ge 0$.

For polynomials \ts $F,G\in \rr[z]$, we write \ts $F\geqslant_z G$ \ts if \ts $F-G\in \rr_+[z]$ \ts
is a polynomial with nonnegative coefficients.  For multivariate polynomials
\. $F,G\in \rr[z_1,\ldots,z_n]$, we define \. $F\geqslant_\bbz G$ \. analogously.
We drop the subscript in \ts $\geqslant$ \ts when the variables are clear.
Obviously, \ts $F\geqslant G$ \ts implies \ts $F\geq G$, but not vice versa,
e.g.\ \. $x^2+y^2\geq 2xy$ \. but \. $x^2+y^2\not\geqslant 2xy$.

\smallskip

\subsection{Posets}\label{ss:notation-posets}
We refer to \cite[Ch.~3]{EC} and \cite{Tro} for standard definitions and
notation.
Let \ts $\cP=(X,\prec)$ \ts be a \defnb{partially ordered set}
on the ground set $X$ of size $|X|=n$, and with the partial order~``$\prec$''.
A \defn{subposet} \ts is an induced poset \ts $(Y,\prec)$ \ts on the subset
\ts $Y\subseteq X$. For an element \ts $x \subseteq X$, we denote by \ts $\cP-x$ \ts
the subposet of $\cP$ on \ts $X-x$.

For a poset \ts $\cP=(X,\prec)$, denote by \ts $\cP^\ast = (X, \prec^\ast)$ \ts the
\defn{dual poset} \ts with \ts $x\prec^\ast y$ \ts if and only if \ts $y\prec x$,
for all \ts $x,y \in X$.
For posets \ts $\cP=(X,\prec_\cP)$ \ts and \ts $\cQ=(Y,\prec_\cQ)$,
the \defn{parallel sum} \. $\cP+\cQ=(Z,\prec)$ \. is the poset on the disjoint union \ts $Z=X \sqcup Y$,
where elements of $X$ retain the partial order of~$\cP$, elements of $Y$ retain
the partial order of~$\cQ$, and elements \ts $x \in X$ \ts and  \ts $y \in Y$ are incomparable.
Similarly, the \defnb{linear sum} \. $\cP\oplus \cQ = (Z,\prec)$, where \ts $x\prec y$ \ts for every
two elements \ts $x \in X$ \ts and  \ts $y \in Y$ \ts and other relations as in the parallel sum.

We use \ts $\cC_n$ \ts and \ts $\cA_n$ \ts to denote the $n$-element \defn{chain} \ts
and \defn{antichain}, respectively.  Clearly, \. $\cC_n = \cC_1 \oplus \ts \cdots \ts \oplus \cC_1$ \ts ($n\ts$~times)\.
and \. $\cA_n = \cC_1 + \ts \cdots \ts + \cC_1$ \ts ($n$ \ts times).

A \defn{lattice} \ts is a poset \ts $\LL=(\cL,\prec)$ \ts with meet \ts $x \vee y$ \ts
(least upper bound) and join \ts $x \wedge y$  (greatest lower bound) well defined,
for all \ts $x,y\in \cL$.  We also use \ts $(\cL,\vee,\wedge)$ \ts to denote the lattice
and the join and meet operations.  The lattice \ts $\LL=(\cL,\vee,\wedge)$ \ts
is \defn{distributive} \ts if it satisfied the distributive law:  \. $x\wedge (y \vee z) =
(x\wedge y) \vee (x\wedge z)$. Finally, for all \ts $X,Y \subseteq \cL$, we denote
\[
X \vee Y \, := \, \{ x\vee y \, : \, x\in X, \. y \in Y\} \quad \text{and} \quad
X \wedge Y \, := \, \{ x\wedge y \, : \, x\in X, \. y \in Y\}. \]

%

\smallskip

\subsection{Linear extensions and $P$-partitions}\label{ss:notation-LE}
A \defn{linear extension} of \ts $\cP$ \ts is a bijection \ts $L: X \to [n]$ \ts
that is order-preserving: \ts $x \prec y$ \ts implies \ts $L(x)<L(y)$, for all \ts
$x,y\in X$.
Denote by \ts $\Ec(\cP)$ \ts the set of linear extensions of \ts $\cP$, and let \ts
$e(\cP):=|\Ec(\cP)|$ \ts be the number of linear extensions.  Observe that \ts $e(\cP)=e(\cP^\ast)$ \ts
and \ts $e(\cP\oplus \cQ) = e(\cP)\cdot e(\cQ)$.

A subset \ts $A \subseteq X$ \ts is an \defn{upper ideal} \ts if
\ts $x \in A$ \ts and \ts $y \succ x$ \ts implies \ts $y \in A$.  Similarly,
a subset \ts $A \subseteq X$ \ts is a
\defn{lower ideal} \ts if \. $x \in A$ \ts and \ts $y \prec x$ \ts implies \ts $y \in A$.
We denote by \. $e(A)$ \. the number of linear extensions of the subposet $(A,\prec)$.

Let \ts $\cP=(X,\prec)$, where $X=\{x_1,\ldots,x_n\}$.  We will always assume that \ts $X$
\ts has a \defn{natural labeling}, i.e.\ \ts $L: x_i\to i$ \. is a linear extension.
A \defn{$\cP$-partition} \ts is an order preserving map \. $A: X\to \nn$, i.e.\
maps which satisfy \. $A(x)\le A(y)$ \. for all \. $x \prec y$.
Denote by \ts $\PP(\cP)$ \ts the set of $P$-partitions and let  \ts $\PP(\cP,t)$ \ts be
the set of $P$-partitions with values at most~$t$.\footnote{In \cite{Sta-PP,EC},
Stanley uses $P$-partitions to denote order-reversing rather than order-preserving
maps.  We adopt this version for clarity and to unify the notation.  Displeased readers
can always think of dual posets.  }

Let \. $\Omega(\cP,t):=|\PP(\cP,t)|$ \. be the
number of $\cP$-partitions.  This is the \defn{order polynomial}
corresponding to the poset~$\cP$.\footnote{A standard definition for order polynomial
is  \ts $\Omega(\cP,t-1)$ \ts as the values in the $\cP$-partition are traditionally~$\ge 1$.
We adopt this version to simplify the notation and hope this does not lead to confusion.}
It is well-known and easy to see that
\begin{equation}\label{eq:OP-asy}
\Omega(\cP,t) \ \sim \ \frac{e(\cP)\, t^n}{n!}   \quad \text{as} \ \ t \to \infty\ts,
\quad \text{where \ \. $|X|=n$.}
\end{equation}


Denote \ts $|A|:= \sum_{x\in X} A(x)$ \ts the sum of the entries in a $\cP$-partition.
Let
\begin{equation}\label{eq:OP-q-def}
\Omega_q(\cP,t) \, := \, \sum_{A\in \PP(\cP,t)} \. q^{|A|}\..
\end{equation}
Stanley showed, see \cite[Thm~3.15.7]{EC}, that there is a statistics \. $\maj: \Ec(\cP) \to \nn$,
such that
\begin{equation}\label{eq:Sta-PP}
\Omega_q(\cP,\infty) \ = \ \frac{1}{(1-q)(1-q^2)\cdots (1-q^n)} \, \sum_{A\in \Ec(\cP)} \. q^{\maj(A)}\..
\end{equation}
More generally, let
\begin{equation}\label{eq:OP-bq-def}
\Omega_\qb(\cP,t) \, := \, \sum_{A\in \PP(\cP,t)} \. q_1^{A(x_1)} \ts \cdots \. q_n^{A(x_n)}\..
\end{equation}
We call this GF the \defn{multivariate order polynomial}.  Note that Stanley gave a generalization
of \eqref{eq:Sta-PP} for \ts $\Omega_\qb(\cP,\infty)$ \ts which we will not need,
see \cite[Thm~3.15.5]{EC}. Finally, for $N\geq 0$, define
\begin{equation}\label{eq:OP-rK-def}
\rK_\bbz(\cP,N) \, := \, \sum_{A\in \PP(\cP,N)} \. z_0^{m_0(A)} \ts \cdots \. z_N^{m_N(A)}\.,
\end{equation}
where \ts $m_i(A):= |A^{-1}(i)|$ \ts is the number of values $i$ in the  $\cP$-partition~$A$.

\smallskip

\subsection{Young diagrams and Young tableaux} \label{ss:notation-SYT}
We refer to \cite{Mac,Sag01} and  \cite[Ch.~7]{EC} for standard definitions and notation.
Let \ts $\la=(\la_1,\ldots,\la_\ell)$ \ts be an \defn{integer partition} \ts of~$n$,
write \ts $\la \vdash n$, where \. $\la_1 \ge \la_2 \ge \ldots \ge \la_\ell> 0$ \
and \. $\la_1+\ldots + \la_\ell=n$.  Let \ts $\ell(\la):=\ell$ \ts denotes the number of parts.
A \defn{conjugate partition} \ts $\la'=(\la_1',\la_2',\ldots)$ \ts is defined by
\ts $\la_j' = |\{i\.: \. \la_i\ge j\}|$.

A \defn{Young diagram} \ts is the set of squares \. $\big\{(i,j)\in \nn^2 \. : \. 1\le j \leq  \la_i,
\ts 1\le i \le \ell\big\}$.  In a mild abuse of notation, we use \ts $\la$ \ts to also
denote the corresponding Young diagram, and refer to it as the \defn{straight shape}.
Let \ts $\mu=(\mu_1,\mu_2,\ldots)$ \ts be a partition such that \.
$\mu_i \le \la_i$ \. for all \ts $0\le i \le \ell$.  The difference of Young diagrams
is denoted by \. $\la/\mu$ \. and called the \defn{skew Young diagram} \ts
of shape \ts $\la/\mu$, or simply
the \defn{skew shape}~$\la/\mu$.  We use \ts $|\la/\mu|$ \ts for the size, i.e.\
the number of squares in~$\la/\mu$.

A \defn{standard Young tableau} \ts of shape $\la/\mu$ \ts
is a bijection \ts $A: \la/\mu\to [n]$ \ts which increases in rows and columns: \.
$A(i,j) < A(i+1,j)$ \ts and \ts $A(i,j) <  A(i,j+1)$ \ts whenever these are defined.
Denote by \ts $\SYT(\la/\mu)$ \ts the set of standard Young tableaux of shape~$\la/\mu$.
We note that \ts $|\SYT(\la)|$ \ts can be computed by the
\defng{hook-length formula}, see e.g.\ \cite[$\S$7.21]{EC}.  Similarly,
the number \ts $|\SYT(\la/\mu)|$ \ts  can be computed by the
\defng{Aitken--Feit determinant formula}, see e.g.~\cite[$\S$7.16]{EC}.

Let poset \ts $\cP_{\la/\mu}=(\la/\mu,\prec)$ \ts
be defined by \. $(i,j) \preccurlyeq (i',j')$ \. if \ts $i\leq i'$ \ts
and \ts $j \leq j'$. 
 For example, \ts $P_{31/11} \simeq C_2$ \ts
and \ts $\cP_{321/21} \simeq A_3$.  The set of linear extensions \ts
$\Ec(\cP_{\la/\mu})$ \ts is in bijection with \ts $\SYT(\la/\mu)$,
so \ts $e(\cP_{\la/\mu}) = |\SYT(\la/\mu)|$.

\smallskip

\subsection{Schur functions and reverse plane partitions} \label{ss:notation-Schur}
Let \ts $A: \la/\mu \to \nn$ \ts be a function which increases in rows and columns.
In this context, function \ts $A$ \ts is called a \defn{reverse plane partition}.\footnote{Note
that reverse plane partitions for \ts $\la/\mu$ \ts are actually $\cP_{\la/\mu}$ -- partitions.
This is another notational compromise we make between the areas. }
Let \ts $\RPP(\la/\mu)$ \ts denote the set of reverse plane partition of shape \ts $\la/\mu$.
We think of \ts $A$ \ts as a Young tableau with integers written in squares of~$\la/\mu$.
If \ts $A\in \RPP(\la/\mu)$ \ts is also increasing in columns and has all entries \ts $\ge 1$,
it is called a \defn{semistandard Young tableau}.  The set of such tableaux is denoted
\ts $\SSYT(\la/\mu)$.  We use \ts $\RPP(\la/\mu,t)$ \ts and \ts $\SSYT(\la/\mu,t)$ \ts
to denote reverse plane partitions and semistandard Young tableaux with entries $\le t$.

\defn{Schur polynomial} \ts is a symmetric polynomial associated with the skew shape \ts $\la/\mu$ \ts
and can be defined as
\begin{equation}\label{eq:Schur-def}
s_{\la/\mu}(z_1,\ldots,z_N) \, = \, \sum_{A \ts \in \ts \SSYT(\la/\mu, \ts N)} \. z_1^{m_1(A)} \ts \cdots \. z_N^{m_N(A)}\.,
\end{equation}
where \ts $m_i(A)=|A^{-1}(i)|$ \ts is the number of $i$'s in~$A$.  \defn{Schur functions} \ts are the stable
limits of Schur polynomials as $n\to \infty$.  They form a linear basis in the space of all symmetric
functions.

For reverse plane partitions, observe the connection to the order polynomial:
\begin{equation}\label{eq:Schur-OP-RPP}
\Om(\la/\mu,t) \ := \ \Om(\cP_{\la/\mu}\ts,t) \ = \, \sum_{A \ts \in \ts \RPP(\la/\mu, \ts t)} \. t^{|A|}\..
\end{equation}

In similar manner, consider the following multivariate GF for the reverse plane partitions:
$$
\RGF_{\la/\mu}(z_0,z_1,\ldots,z_N) \, = \, \sum_{A \ts \in \ts  \RPP(\la/\mu, \ts N)} \. z_0^{m_0(A)} \. z_1^{m_1(A)} \. \cdots \. z_N^{m_N(A)}\.,
$$
Note the notation above, we have \. $\RGF_{\la/\mu}(z_0,z_1,\ldots, z_N) = \rK_\bbz\big(\cP_{\la/\mu},N\big)$.


\medskip

\section{Linear extensions} \label{s:main}
%
%
%

\subsection{Fishburn's inequality}\label{ss:main-Fish}
We start with the following fundamental inequality:

\smallskip

\begin{thm}[{\rm \defn{Fishburn's inequality} \cite{Fish1}}{}]
\label{t:Fish}
Let \ts $\cP=(X,\prec)$ \ts be a finite poset, and let \ts $A,B \ssu X$ \ts be lower ideals of~$P$.
Then:
\begin{equation}\label{eq:Fish}
	  \frac{e(A \cup B) \.\cdot\. e(A\cap B)}{e(A) \.\cdot\. e(B)}  \
\geq \ \frac{|A \cup B|! \. \cdot \. |A \cap B|!}{|A|! \. \cdot \.  |B|!}\,.
\end{equation}
\end{thm}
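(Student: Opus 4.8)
The plan is to follow Fishburn's strategy and first prove the stronger ``supermultiplicativity'' of the order polynomial,
\begin{equation}\label{eq:fish-OP-sm}
\Omega(A\cup B,t)\cdot\Omega(A\cap B,t)\ \ge\ \Omega(A,t)\cdot\Omega(B,t)\qquad\text{for all }t\ge 0,
\end{equation}
and then to divide \eqref{eq:fish-OP-sm} by $\Omega(A,t)\,\Omega(B,t)$ and let $t\to\infty$. Since $|A|+|B|=|A\cup B|+|A\cap B|$, the asymptotics in \eqref{eq:OP-asy} make both sides of \eqref{eq:fish-OP-sm} have the same degree in $t$, so comparing the leading coefficients turns \eqref{eq:fish-OP-sm} into exactly \eqref{eq:Fish}.

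To prove \eqref{eq:fish-OP-sm} I would first reduce to the case $X=A\cup B$, since all four posets involved are subposets of $A\cup B$. Writing $C:=A\cap B$ (again a lower ideal), $P:=A\setminus B$ and $Q:=B\setminus A$, the key observation is that $A$ and $B$ being \emph{lower} ideals forces every element of $P$ to be incomparable to every element of $Q$: if $p\in P$, $q\in Q$, then $p\prec q$ would put $p$ into the lower ideal $B$ (which contains $q$), and $q\prec p$ would put $q$ into $A$, both absurd. Hence every comparable pair of $X$ lies inside $A$ or inside $B$, so a map $h\colon X\to\{0,1,\dots,t\}$ is order-preserving iff $h|_A$ and $h|_B$ are; restriction then identifies $\PP(X,t)$ with the set of pairs $(f,g)\in\PP(A,t)\times\PP(B,t)$ satisfying $f|_C=g|_C$.

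Next, for $\phi\in\PP(C,t)$ let $a(\phi)$ and $b(\phi)$ count the order-preserving extensions of $\phi$ to $A$ and to $B$, respectively. Summing over $\phi\in\PP(C,t)$ gives $\Omega(A,t)=\sum_\phi a(\phi)$, $\Omega(B,t)=\sum_\phi b(\phi)$, $\Omega(A\cap B,t)=\sum_\phi 1$, and, by the gluing identification above, $\Omega(A\cup B,t)=\sum_\phi a(\phi)\,b(\phi)$. Now $\PP(C,t)$, ordered by pointwise comparison of values, is a sublattice of $\{0,1,\dots,t\}^C$ (coordinatewise $\max$ and $\min$ of order-preserving maps are order-preserving), hence a finite distributive lattice $L$; moreover $a$ and $b$ are order-\emph{reversing} on $L$, because $C$ is a lower ideal, so extending $\phi$ across $P$ or $Q$ only imposes lower bounds on the new values and a pointwise larger $\phi$ has weakly fewer extensions. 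Finally I would apply the AD inequality (Theorem~\ref{thm:AD}) on $L$ with $\alpha:=a$, $\beta:=b$, $\gamma$ the constant function $1$, $\delta:=ab$, and with both subsets in the conclusion taken to be $L$ (so $L\vee L=L\wedge L=L$): the pointwise hypothesis $\alpha(\phi)\beta(\psi)\le\gamma(\phi\vee\psi)\,\delta(\phi\wedge\psi)$ reads $a(\phi)\,b(\psi)\le a(\phi\wedge\psi)\,b(\phi\wedge\psi)$, which holds since $a,b$ are order-reversing and $\phi\wedge\psi\le\phi,\psi$; and the conclusion $\alpha(L)\beta(L)\le\gamma(L)\delta(L)$ is precisely \eqref{eq:fish-OP-sm}.

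The step I expect to require the most care is the correct setup of the AD inequality: because the extension counts $a(\phi),b(\phi)$ \emph{decrease} in $\phi$ rather than increase, the product $ab$ must be placed on the meet side ($\delta$) and not the join side ($\gamma$), and getting this wrong reverses the final inequality. This monotonicity, together with the validity of the gluing bijection, is exactly where the hypothesis that $A$ and $B$ are \emph{lower} ideals (not arbitrary subsets) is used, so both consequences are worth isolating and checking carefully.
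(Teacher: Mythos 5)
Your proof is correct, but it establishes the order-polynomial inequality $\Omega(A\cup B,t)\,\Omega(A\cap B,t)\ge\Omega(A,t)\,\Omega(B,t)$ by a genuinely different route than the paper; the subsequent limit $t\to\infty$ via \eqref{eq:OP-asy} coincides with the paper's derivation of \eqref{eq:Fish-prob} from \eqref{eq:LP-OP1}. For the order-polynomial step, the paper proves Theorem~\ref{t:main-OP} by applying the AD inequality on the lattice $\cL=\PP(\cP,t)$ of all $\cP$-partitions of the ambient poset, taking $\alpha,\beta,\gamma,\delta$ to be the \emph{indicator} functions of the conditions that $T$ vanishes on $A$, on $B$, on $A\cap B$, on $A\cup B$, respectively (the maximality conditions on $C,D$ being vacuous when $C=D=\varnothing$); the AD hypothesis then reduces to a one-line max/min computation. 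You instead work on the smaller lattice $L=\PP(A\cap B,t)$, feeding AD the extension-count functions $a,b$, the constant $1$, and the product $ab$ --- in effect the FKG inequality for the uniform measure and two order-reversing functions. Your route requires the extra structural lemmas that the paper's setup bypasses (the incomparability of $A\setminus B$ and $B\setminus A$, the gluing identity $\Omega(A\cup B,t)=\sum_\phi a(\phi)b(\phi)$, and the order-reversing property of $a$ and $b$), all of which you state and verify correctly, including the delicate point that $ab$ must sit on the $\delta$ (meet) side of AD precisely because $a,b$ decrease. The paper's indicator-function formulation has the advantage of generalizing directly to the self-dual Theorem~\ref{t:main-LE} with nonempty upper ideals $C,D$, and to the multivariate Theorems~\ref{t:main-OP-multi} and~\ref{t:main-K}, which is why the paper adopts it; your version is a tidy, self-contained proof of Fishburn's inequality per se, but would need substantial reworking to reach those refinements.
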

\smallskip

Using the notation
$$
\en(P) \. := \. \frac{e(P)}{|X|!} \.,
$$
Fishburn's inequality can be rewritten in a more concise form
as a correlation inequality for probabilities:
\begin{equation}\label{eq:Fish-prob}
	  \en(A \cup B) \.\cdot\. \en(A\cap B) \
\geq \en(A) \.\cdot\. \en(B)\,.
\end{equation}
The original proof of Fishburn's inequality uses the \defng{AD inequality}.  Note that it is tight
for the antichain \ts $\cP=\cA_n$.

\smallskip

\subsection{Bj\"{o}rner's inequality}\label{ss:main-Bjo}
For a skew Young diagram \ts $|\la/\mu|=n$, we similarly denote
$$
\fe(\la/\mu) \, := \, f(\cP_{\la/\mu}) \, = \, \frac{|\SYT(\la/\mu)|}{n!}\..
$$
Now \eqref{eq:Fish-prob} gives:

\smallskip

\begin{cor}[{\rm \defn{Bj\"{o}rner's inequality} \cite[$\S$6]{Bjo11}}{}]\label{c:Bjo}
Let \ts $\mu$ \ts and \ts $\nu$ \ts be Young diagrams.  Then:
	\begin{equation}\label{eq:Bjo}
	\fe(\mu\vee \nu) \. \cdot \.  \fe(\mu\wedge \nu) \ \ge \ \fe(\mu) \. \cdot \. \fe(\nu)\ts,
	\end{equation}
	where \ts $\vee$ \ts and \ts $\wedge$ \ts refer to the union and intersection of the Young diagrams.
\end{cor}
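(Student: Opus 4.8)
The plan is to deduce Corollary~\ref{c:Bjo} directly from Fishburn's inequality in its probabilistic form~\eqref{eq:Fish-prob}, by choosing the poset $\cP$ and the lower ideals $A,B$ appropriately. Concretely, I would take a straight shape $\la$ large enough to contain both $\mu$ and $\nu$ as sub-Young-diagrams (say $\la_i = \max(\mu_i,\nu_i)$ for all $i$, so that $\mu\vee\nu\subseteq\la$), and work inside the poset $\cP = \cP_\la$ on the cells of $\la$ with the coordinatewise order $(i,j)\preccurlyeq(i',j')$ iff $i\le i'$ and $j\le j'$. The key observation is that a sub-Young-diagram $\mu\subseteq\la$ is precisely a \emph{lower ideal} of $\cP_\la$: if a cell $(i,j)$ lies in $\mu$ and $(i',j')\preccurlyeq(i,j)$, then $i'\le i$ and $j'\le j$, so $\mu_{i'}\ge\mu_i\ge j\ge j'$ and $(i',j')\in\mu$. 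Conversely every lower ideal of $\cP_\la$ has this staircase shape, hence is a sub-Young-diagram of $\la$.

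Next I would record the lattice-theoretic compatibility: under this identification of sub-Young-diagrams of $\la$ with lower ideals of $\cP_\la$, the union $\mu\vee\nu$ (the diagram with rows $\max(\mu_i,\nu_i)$) corresponds exactly to the set-theoretic union $A\cup B$ of the corresponding ideals, and the intersection $\mu\wedge\nu$ (rows $\min(\mu_i,\nu_i)$) corresponds to $A\cap B$. This is immediate from the fact that the lattice of lower ideals of any poset, ordered by inclusion, has join $=$ union and meet $=$ intersection, combined with the bijection above. Then, applying~\eqref{eq:Fish-prob} with this $\cP$, $A\leftrightarrow\mu$, $B\leftrightarrow\nu$ gives
\[
\en(A\cup B)\cdot\en(A\cap B)\ \ge\ \en(A)\cdot\en(B).
\]
Finally I would translate back using the identification $e(\cP_{\la/\emptyset\text{-shaped ideal }\mu}) = e(\cP_\mu) = |\SYT(\mu)|$, recalling from $\S$\ref{ss:notation-SYT} that linear extensions of $\cP_{\mu}$ are in bijection with standard Young tableaux of shape $\mu$, so that $\en$ applied to the ideal $\mu$ equals $\fe(\mu) = |\SYT(\mu)|/|\mu|!$, and likewise for $\nu$, $\mu\vee\nu$, $\mu\wedge\nu$. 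Substituting yields~\eqref{eq:Bjo}.

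The only genuine point requiring care — and the step I expect to be the main (minor) obstacle — is the bookkeeping that the subposet $(A,\prec)$ induced on a lower ideal $A$ of $\cP_\la$, when $A$ has the shape of a Young diagram $\mu$, is isomorphic as a poset to $\cP_\mu$ itself, and hence $e(A) = |\SYT(\mu)|$ and the ground-set size $|A| = |\mu|$ is the same in both descriptions; this must hold simultaneously for all four diagrams $\mu,\nu,\mu\vee\nu,\mu\wedge\nu$ viewed inside the \emph{same} ambient $\la$. Once this is checked the corollary is just a relabeling of Theorem~\ref{t:Fish}. One should also note the tightness remark carries over: equality in~\eqref{eq:Bjo} holds e.g.\ when the relevant poset is an antichain, matching the case recorded after Theorem~\ref{t:Fish}.
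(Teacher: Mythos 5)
Your proposal is correct and matches the paper's approach: the paper derives Corollary~\ref{c:Bjo} directly from Fishburn's inequality in its probabilistic form~\eqref{eq:Fish-prob} (``Now \eqref{eq:Fish-prob} gives:''), leaving implicit precisely the identification you spell out, namely that sub-Young-diagrams of $\lambda := \mu\vee\nu$ are exactly the lower ideals of $\cP_\lambda$, with union and intersection of ideals matching $\vee$ and $\wedge$ of diagrams. Your bookkeeping of the induced subposets and sizes is accurate, so the argument goes through.
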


\smallskip

Bj\"{o}rner's proof used another Fishburn's result combined with the some calculations
using the hook-length formula.  The following result has an ambiguous status of being
nominally new, yet it easily follows from the LP inequality (see $\S$\ref{ss:P-part-LP} below).

\smallskip

\begin{cor}[{\rm \defn{generalized Bj\"{o}rner's inequality}}{}]\label{c:Bjo-gen}
Let \ts $\mu/\al$ \ts and \ts $\nu/\be$ \ts be skew Young diagrams.  Then:
	\begin{equation}\label{eq:Bjo-gen}
	\fe(\mu/\al \ts \vee \ts \nu/\be) \. \cdot \.  \fe(\mu/\al \ts \wedge \ts \nu/\be) \ \ge \ \fe(\mu/\al) \. \cdot \. \fe(\nu/\be)\ts.
	\end{equation}
\vskip.18cm
\nin
	where \, $\mu/\al \. \vee \. \nu/\be \ts := \ts (\mu\vee \nu)/(\al\vee \be)$ \, and \,
$\mu/\al \. \wedge \. \nu/\be \ts := \ts (\mu\wedge \nu)/(\al\wedge \be)$.
\end{cor}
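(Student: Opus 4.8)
The plan is to derive \eqref{eq:Bjo-gen} from the self-dual generalization of Fishburn's inequality (Theorem~\ref{t:main-LE}); equivalently, \eqref{eq:Bjo-gen} is the unweighted specialization of the Lam--Pylyavskyy inequality treated in $\S$\ref{ss:P-part-LP}. Just as Bj\"orner's inequality \eqref{eq:Bjo} can be read off from \eqref{eq:Fish-prob} applied to the lower ideals $\mu,\nu$ of a large rectangle poset, the skew version needs the self-dual upgrade, the reason being that a skew shape $\la/\rho$ is neither a lower nor an upper ideal of a grid poset but \emph{is} the intersection of one of each. Concretely, I would fix a square $R$ large enough to contain $\mu,\nu,\al,\be$ and work in the cell poset $\cR:=\cP_R$ with the coordinatewise order. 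In $\cR$ the diagrams $\mu$, $\nu$, $\mu\vee\nu=\mu\cup\nu$, $\mu\wedge\nu=\mu\cap\nu$ are lower ideals; the complements $R\setminus\al$, $R\setminus\be$, $R\setminus(\al\vee\be)=(R\setminus\al)\cap(R\setminus\be)$, $R\setminus(\al\wedge\be)=(R\setminus\al)\cup(R\setminus\be)$ are upper ideals; and for any $\rho\subseteq\la\subseteq R$ the induced subposet of $\cR$ on the cell set $\la\cap(R\setminus\rho)$ is literally $\cP_{\la/\rho}$, so that $\fe\big(\la\cap(R\setminus\rho)\big)=\fe(\la/\rho)$.

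With $A_1=\mu$, $A_2=\nu$, $B_1=R\setminus\al$, $B_2=R\setminus\be$, this dictionary rewrites the four factors of \eqref{eq:Bjo-gen} as $\fe(A_1\cap B_1)$, $\fe(A_2\cap B_2)$, $\fe\big((A_1\cup A_2)\cap(B_1\cap B_2)\big)$ and $\fe\big((A_1\cap A_2)\cap(B_1\cup B_2)\big)$, all relative to the single poset $\cR$. I would then apply Theorem~\ref{t:main-LE} to the lower ideals $A_1,A_2$ and upper ideals $B_1,B_2$ of $\cR$; its conclusion is exactly $\fe\big((A_1\cup A_2)\cap(B_1\cap B_2)\big)\cdot\fe\big((A_1\cap A_2)\cap(B_1\cup B_2)\big)\ge\fe(A_1\cap B_1)\cdot\fe(A_2\cap B_2)$, which is \eqref{eq:Bjo-gen}.

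I do not expect a genuine obstacle: the entire content lies in Theorem~\ref{t:main-LE}, and this corollary is a matter of matching definitions. The only points needing care are that ``$\vee$'' and ``$\wedge$'' of skew shapes --- defined coordinatewise on the outer and inner partitions --- correspond under the dictionary to $(A_1\cup A_2)\cap(B_1\cap B_2)$ and $(A_1\cap A_2)\cap(B_1\cup B_2)$, and that the hypotheses $\al\subseteq\mu$ and $\be\subseteq\nu$ make all four regions honest skew shapes (setting $\al=\be=\varnothing$ recovers \eqref{eq:Bjo}). Should one prefer to bypass Theorem~\ref{t:main-LE}, the same conclusion follows by a limiting argument: the $P$-partition correlation inequality $\Omega\big(\cP_{(\mu\vee\nu)/(\al\vee\be)},t\big)\,\Omega\big(\cP_{(\mu\wedge\nu)/(\al\wedge\be)},t\big)\ge\Omega\big(\cP_{\mu/\al},t\big)\,\Omega\big(\cP_{\nu/\be},t\big)$, which holds for all $t$ as a special case of the results of $\S$\ref{ss:P-part-LP}, turns into \eqref{eq:Bjo-gen} upon dividing by $t^{2n}$ with $n=|\mu/\al|=|\nu/\be|$ --- the two sides having the same degree because $|\mu\vee\nu|+|\mu\wedge\nu|=|\mu|+|\nu|$ and $|\al\vee\be|+|\al\wedge\be|=|\al|+|\be|$ --- and letting $t\to\infty$ via \eqref{eq:OP-asy}.
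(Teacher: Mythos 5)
Your argument is in substance the same as the paper's, and the inequality you end up proving is indeed \eqref{eq:Bjo-gen}. The paper chooses the ambient poset to be $\cP_\la$ with $\la=\mu\vee\nu$ and takes $A=\al$, $B=\be$, $C=\la/\mu$, $D=\la/\nu$ in Theorem~\ref{t:main-LE}; your choice of a bigger rectangle $R$ is an inessential variant of the same thing, since the cells of $R\setminus\la$ are always discarded.

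However, there is a misstatement in how you invoke Theorem~\ref{t:main-LE}. You say you ``apply Theorem~\ref{t:main-LE} to the lower ideals $A_1,A_2$ and upper ideals $B_1,B_2$,'' i.e.\ with $A=A_1=\mu$, $C=B_1=R\setminus\al$, etc. But then the hypothesis $A\cap C=\varnothing$ of the theorem reads $\mu\cap(R\setminus\al)=\mu/\al=\varnothing$, which is exactly the opposite of what is true; likewise the conclusion of the theorem as stated involves $\en(X-A-C)$, and with your assignment $X-A_1-B_1=R\setminus(A_1\cup B_1)=\varnothing$, not $A_1\cap B_1$. What you must actually do is apply the theorem to the \emph{complements}: take $A=R\setminus B_1=\al$, $B=R\setminus B_2=\be$, $C=R\setminus A_1=R\setminus\mu$, $D=R\setminus A_2=R\setminus\nu$. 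Then $A\cap C=\al\cap(R\setminus\mu)=\varnothing$ precisely because $\al\subseteq\mu$ (and similarly $B\cap D=\varnothing$), and one finds $X-A-C=\mu\cap(R\setminus\al)=A_1\cap B_1$, $X-V=(A_1\cap A_2)\cap(B_1\cup B_2)$, $X-W=(A_1\cup A_2)\cap(B_1\cap B_2)$, which gives exactly the displayed inequality you wrote. So your reformulation is correct, but the route you describe to it would fail if taken literally; you need to record the complementation explicitly, together with the observation that $\al\subseteq\mu$, $\be\subseteq\nu$ is precisely the disjointness condition in the theorem. Finally, in the alternative limit argument at the end, the claim $n=|\mu/\al|=|\nu/\be|$ is not available and not needed: dividing both sides by $t^{|\mu/\al|+|\nu/\be|}$ and using \eqref{eq:OP-asy} works because $|(\mu\vee\nu)/(\al\vee\be)|+|(\mu\wedge\nu)/(\al\wedge\be)|=|\mu/\al|+|\nu/\be|$ by modularity of $|\cdot|$.
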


\smallskip

In contrast with Bj\"{o}rner's inequality, the generalized Bj\"{o}rner inequality does not
follow from Fishburn's inequality, at least not directly.

\smallskip

\subsection{Generalized Fishburn's inequality}\label{ss:main-dual}
Our first new result is a common generalization of both the Fishburn's and the
generalized Bj\"orner's inequalities.

\smallskip

\begin{thm}\label{t:main-LE}
Let \ts $\cP=(X,\prec)$ \ts be a finite poset. Let \ts $A,B \subseteq X$ \ts be lower ideals,
and let \ts $C,D \subseteq X$ \ts be upper ideals of~$\cP$, such that \.
$A \cap C = B \cap D = \varnothing$.
Then:
\begin{equation}\label{eq:main-LE}
\en(X-V) \.\cdot\. \en(X-W) \ \geq \ \en(X-A-C) \.\cdot\. \en(X-B-D)\,.
\end{equation}
where \. $V:=(A \cap B) \cup (C \cup D)$ \. and \. $W:=(A \cup B) \cup (C \cap D)$.
\end{thm}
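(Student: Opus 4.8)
The plan is to deduce \eqref{eq:main-LE} from its order polynomial refinement
\[
\Omega(X-A-C,t)\cdot\Omega(X-B-D,t)\ \le\ \Omega(X-V,t)\cdot\Omega(X-W,t)\qquad\text{for all }t\ge 0,
\]
(the statement called Theorem~\ref{t:main-OP} in the introduction), and then to let $t\to\infty$. The refinement will follow from a single application of the AD inequality (Theorem~\ref{thm:AD}) to a distributive lattice of $\cP$-partitions, along the lines of Fishburn's argument.

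Fix $t\ge 0$ and let $\Gamma:=\PP(\cP,t)$, the set of order-preserving maps $X\to\{0,1,\dots,t\}$ ordered pointwise. The pointwise maximum and minimum of two such maps are again order-preserving with values in $\{0,\dots,t\}$, so $\Gamma$ is a finite distributive lattice with join $f\vee g=\max(f,g)$ and meet $f\wedge g=\min(f,g)$ taken pointwise. For a lower ideal $I$ and an upper ideal $K$ of $\cP$ with $I\cap K=\varnothing$, set
\[
S_{I,K}\ :=\ \bigl\{\,f\in\Gamma\ :\ f|_I\equiv 0,\ \ f|_K\equiv t\,\bigr\}.
\]
Restriction to $X-I-K$ is a bijection $S_{I,K}\to\PP(X-I-K,t)$, with inverse given by extending a $\cP$-partition of the subposet on $X-I-K$ by $0$ on $I$ and by $t$ on $K$; checking the relation $x\prec y$ over the nine cases for the sets containing $x$ and $y$ shows this extension is order-preserving, the three potentially bad cases ($x\in X-I-K$ and $y\in I$; $x\in K$ and $y\in X-I-K$; $x\in K$ and $y\in I$) being impossible precisely because $I$ is a lower ideal, $K$ is an upper ideal, and $I\cap K=\varnothing$. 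Hence $|S_{I,K}|=\Omega(X-I-K,t)$. Since $A\cap C=B\cap D=\varnothing$, one has $(A\cap B)\cap(C\cup D)=\varnothing$ and $(A\cup B)\cap(C\cap D)=\varnothing$, so this applies to the four pairs $(A,C)$, $(B,D)$, $(A\cap B,\,C\cup D)$, $(A\cup B,\,C\cap D)$ (here $A\cap B$ and $A\cup B$ are lower ideals, while $C\cup D$ and $C\cap D$ are upper ideals), giving $|S_{A,C}|=\Omega(X-A-C,t)$, $|S_{B,D}|=\Omega(X-B-D,t)$, $|S_{A\cap B,\,C\cup D}|=\Omega(X-V,t)$ and $|S_{A\cup B,\,C\cap D}|=\Omega(X-W,t)$.

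Now apply Theorem~\ref{thm:AD} with $\alpha,\beta,\gamma,\delta$ the indicator functions of $S_{A,C}$, $S_{B,D}$, $S_{A\cap B,\,C\cup D}$, $S_{A\cup B,\,C\cap D}$, respectively. Its hypothesis $\alpha(f)\,\beta(g)\le\gamma(f\vee g)\,\delta(f\wedge g)$ reduces to the claim that $f\in S_{A,C}$ and $g\in S_{B,D}$ imply $f\vee g\in S_{A\cap B,\,C\cup D}$ and $f\wedge g\in S_{A\cup B,\,C\cap D}$, which is immediate: on $A\cap B$ both $f$ and $g$ vanish, so $f\vee g=0$ there; on $C\cup D$ one of $f,g$ equals $t$ and the other is $\le t$, so $f\vee g=t$ there; on $A\cup B$ one of $f,g$ equals $0$ and the other is $\ge 0$, so $f\wedge g=0$ there; and on $C\cap D$ both equal $t$, so $f\wedge g=t$ there. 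Summing over $\Gamma$, Theorem~\ref{thm:AD} gives the displayed refinement for every $t\ge 0$. Finally, by \eqref{eq:OP-asy} we have $\Omega(\cQ,t)\sim\en(\cQ)\,t^{\,|\cQ|}$ as $t\to\infty$ for any poset $\cQ$; since $|A\cap B|+|A\cup B|=|A|+|B|$ and $|C\cap D|+|C\cup D|=|C|+|D|$, both sides of the refinement have the same degree $2|X|-|A|-|B|-|C|-|D|$ in $t$, so dividing by that power of $t$ and letting $t\to\infty$ yields exactly \eqref{eq:main-LE}.

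The one point requiring genuine thought is the encoding: choosing $\Gamma=\PP(\cP,t)$ and the subsets $S_{I,K}$ so that each $|S_{I,K}|$ equals the order polynomial of the correct subposet \emph{and} the pointwise lattice operations carry $X-V$ to $f\vee g$ and $X-W$ to $f\wedge g$ — only then does the AD monotonicity hypothesis collapse to the four one-line identities above. The remaining ingredients (the extension-by-constants bijection and the leading-term count in the limit) are routine, and it is there that the assumptions that $A,B$ are lower ideals, $C,D$ are upper ideals, and $A\cap C=B\cap D=\varnothing$ get used.
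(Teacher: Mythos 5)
Your proof is correct and takes essentially the same route as the paper: apply the AD inequality to the distributive lattice $\PP(\cP,t)$ with pointwise join and meet, using the four indicator functions of the sets of $\cP$-partitions vanishing on $A,B,A\cap B,A\cup B$ and equal to $t$ on $C,D,C\cup D,C\cap D$, then let $t\to\infty$ via \eqref{eq:OP-asy}. You supply a few routine details the paper leaves implicit (the restriction bijection showing $|S_{I,K}|=\Omega(X-I-K,t)$, and the degree check $|A\cap B|+|A\cup B|=|A|+|B|$ ensuring both sides of the refinement have the same leading power of $t$), but the argument is the same.
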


\smallskip

Note that Fishburn's inequality (Theorem~\ref{t:Fish}) is a special case  \ts $C=D=\emp$, and that
Theorem~\ref{t:main-LE} is self-dual.  We prove the theorem using the AD inequality in Section~\ref{s:AD}.

\smallskip

\begin{proof}[Proof of  \, {\rm $[$Theorem~\ref{t:main-LE} \, $\Longrightarrow$ \, Corollary~\ref{c:Bjo-gen}$]$}]
Let \ts $\cP:= \cP_\la$, where \ts $\la:= \mu \vee \nu$.  In the notation of Theorem~\ref{t:main-LE}, we have \ts $X=\la$.
Consider the following four subsets of the Young diagram \ts $\la$:
\begin{equation}\label{eq:subs-Young}
A \. := \. \al\., \ \ B \. := \. \be\., \ \ C \. := \. \la/\mu\., \ \ D \. := \. \la/\nu\..
\end{equation}
Now observe that
$$X-A-C \. = \.  \mu/\alpha \ts, \ \  X-B-D \. = \.  \nu/\beta\., \ \  X -V \. = \.  (\mu \wedge \nu)/(\alpha \wedge  \beta)\ts,
 \ \  X -W \. = \.  (\mu \vee \nu)/(\alpha \vee \beta)
\ts.
$$
Thus, \eqref{eq:main-LE} implies \eqref{eq:Bjo-gen}, as desired.
\end{proof}

\medskip


\section{$P$-partitions}\label{s:P-part}

\subsection{Schur functions}\label{ss:P-part-LP}
The following \defn{LP inequality} \ts is the key result which inspired
this paper.

\smallskip

\begin{thm}[{\rm \defn{Lam--Pylyavskyy inequality for Schur polynomials} \cite[Thm~4.5]{LP07}}{}]
\label{t:P-Schur}
Let \ts $\mu/\al$ \ts and \ts $\nu/\be$ \ts be skew Young diagrams, and let \ts
$\bbz=(z_1,\ldots,z_N)$, where \ts $N\ge \ell(\mu),\ell(\nu)$.
Then:
	\begin{equation}\label{eq:LP1}
	s_{\mu\vee \nu}(\bbz) \. \cdot \.  s_{\mu\wedge \nu}(\bbz) \, \
\ges_\bbz \, \ s_{\mu}(\bbz) \. \cdot \. s_{\nu}(\bbz)\,.
	\end{equation}
More generally, we have:
	\begin{equation}\label{eq:LP2}
	s_{\mu/\al \. \vee \. \nu/\be}(\bbz) \. \cdot \.  s_{\mu/\al \. \wedge \. \nu/\be}(\bbz) \, \
\ges_\bbz \, \ s_{\mu/\al}(\bbz) \. \cdot \. s_{\nu/\be}(\bbz)\.,
	\end{equation}

\vskip.18cm
\nin
	where \. $\mu/\al \. \vee \. \nu/\be \ts := \ts (\mu\vee \nu)/(\al\vee \be)$ \. and \.
$\mu/\al \. \wedge \. \nu/\be \ts := \ts (\mu\wedge \nu)/(\al\wedge \be)$.
\end{thm}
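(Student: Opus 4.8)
The plan is to realize Schur polynomials as specializations of the multivariate order polynomial $\rK_\bbz(\cP,N)$ attached to the poset $\cP_{\mu\vee\nu}$, and then invoke the multivariate $P$-partition correlation inequality (Theorem~\ref{t:main-K}) together with the set-theoretic bookkeeping already used in the deduction of Corollary~\ref{c:Bjo-gen} from Theorem~\ref{t:main-LE}. First I would recall from $\S$\ref{ss:notation-Schur} that for a skew shape $\la/\mu$ one has $s_{\la/\mu}(z_1,\dots,z_N) = \sum_{A\in\SSYT(\la/\mu,N)} z_1^{m_1(A)}\cdots z_N^{m_N(A)}$, and that $\SSYT$ entries are exactly $\cP_{\la/\mu}$-partitions with values in $\{1,\dots,N\}$ that are \emph{strictly} increasing down columns. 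The strictness along columns is the one genuine subtlety: a $\cP_{\la/\mu}$-partition is only weakly increasing in both directions, so $s_{\la/\mu}$ is not literally $\rK_\bbz(\cP_{\la/\mu},N)$. The standard fix is the usual bijection $A(i,j)\mapsto A(i,j)+i$ (shift row $i$ down by $i$), which converts a semistandard filling with entries in $\{1,\dots,N\}$ into a reverse plane partition (weakly increasing both ways) with entries in a shifted range; under this map the monomial weight $z_1^{m_1}\cdots z_N^{m_N}$ is tracked by relabelling the variables $z_0,\dots,z_{N'}$ appearing in $\rK_\bbz$. So the first real step is to check that this column-shift is compatible with taking $\vee$ and $\wedge$ of the shapes $\mu/\al$ and $\nu/\be$ — i.e. that shifting commutes with intersection and union of Young diagrams in the relevant sense — so that the four shapes $\mu/\al$, $\nu/\be$, $(\mu\vee\nu)/(\al\vee\be)$, $(\mu\wedge\nu)/(\al\wedge\be)$ all map to the four poset-ideal configurations demanded by Theorem~\ref{t:main-K}.

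Next, mirroring the proof of $[$Theorem~\ref{t:main-LE}$\Rightarrow$Corollary~\ref{c:Bjo-gen}$]$, I would set $\cP:=\cP_\la$ with $\la:=\mu\vee\nu$, take the four subsets of the Young diagram $X=\la$ given by $A:=\al$, $B:=\be$ (lower ideals) and $C:=\la/\mu$, $D:=\la/\nu$ (upper ideals), with $A\cap C=B\cap D=\varnothing$. Then $X-A-C=\mu/\al$, $X-B-D=\nu/\be$, $X-V=(\mu\wedge\nu)/(\al\wedge\be)$ and $X-W=(\mu\vee\nu)/(\al\vee\be)$ exactly as before. Applying the multivariate inequality of Theorem~\ref{t:main-K} to these ideals yields a $\ges_\bbz$-inequality between products of $\rK_\bbz$'s of the four sub-posets; translating back through the column-shift bijection and the variable relabelling turns this into precisely $s_{\mu/\al\.\vee\.\nu/\be}(\bbz)\cdot s_{\mu/\al\.\wedge\.\nu/\be}(\bbz)\ges_\bbz s_{\mu/\al}(\bbz)\cdot s_{\nu/\be}(\bbz)$, which is \eqref{eq:LP2}; specializing $\al=\be=\varnothing$ gives \eqref{eq:LP1}. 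A small point to verify is the hypothesis $N\ge\ell(\mu),\ell(\nu)$: it guarantees that no column of any of the four shapes has length exceeding $N$, so the shift keeps all entries inside the available range of variables and no monomials are lost.

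The main obstacle, as indicated, is entirely at the interface: reconciling the \emph{strict}-along-columns convention for semistandard tableaux (hence Schur functions) with the \emph{weak} monotonicity of $P$-partitions, and doing so uniformly across a meet/join pair of shapes so that the combinatorial atlas / AD-type inequality can be applied verbatim. Once the column-shift dictionary is set up carefully — in particular checking it is a weight-preserving bijection after the explicit variable substitution $z_i \leftrightarrow z_{i+\text{(row index)}}$, and that it respects $\vee$ and $\wedge$ — the rest is a direct transcription of the linear-extension argument. Everything downstream (the manipulation of the four ideals, the appeal to Theorem~\ref{t:main-K}, the degeneration to \eqref{eq:LP1}) is routine. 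I would also remark that, unlike Shepp's limit argument in the unweighted case, this route gives the coefficientwise strengthening $\ges_\bbz$ for free, since Theorem~\ref{t:main-K} is itself a coefficientwise statement.
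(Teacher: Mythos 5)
The central difficulty — reconciling the \emph{strict} column condition for semistandard tableaux with the \emph{weak} monotonicity underlying $\rK_\bbz$ — is exactly where your proposal breaks. The column-shift $A(i,j)\mapsto A(i,j)+i$ does convert an SSYT filling into a weakly increasing one, and it does commute with cellwise $\max$ and $\min$, but it is \emph{not} weight-compatible with $\rK_\bbz$. The polynomial $\rK_\bbz(\cP,N)$ tracks only the global multiplicities $m_k(A)=|A^{-1}(k)|$, while the shift sends an entry equal to $v$ in row $i$ to $v+i$; thus $m_k(A')=\sum_i |\{\ts j : A(i,j)=k-i\}|$ mixes contributions from different original values across different rows. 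There is no substitution $z_k\mapsto z_{\sigma(k)}$ in the $\rK_\bbz$-variables that can undo a \emph{row-dependent} shift, so the $\rK_\bbz$-inequality of Theorem~\ref{t:main-K} does not specialize to the Schur inequality \eqref{eq:LP2} through this dictionary. Your phrase ``the variable substitution $z_i\leftrightarrow z_{i+\text{(row index)}}$'' is precisely the symptom: it is not a substitution of the variables of $\rK_\bbz$ at all, but a cell-dependent relabelling, which would require a finer statistic (tracking value-by-row) that $\rK_\bbz$ does not record. So Theorem~\ref{t:main-K} alone cannot deliver \eqref{eq:LP2} via the shift.

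The paper's actual argument sidesteps the shift entirely. It works in the single lattice $\cL'=\RPP(\la,N)$ with $\la=\mu\vee\nu$, and for each of the four skew shapes $\pi/\tau$ it defines an indicator $\phi^{\pi/\tau}$ of those $T\in\RPP(\la,N)$ with $T\ge 1$ everywhere, $T\equiv 1$ on $\tau$, $T\equiv N$ on $\la/\pi$, and $T(i,j)<T(i+1,j)$ inside $\pi/\tau$ — i.e.\ the strictness is built into the indicator, not transformed away. One then sets $\ze=\phi^{\mu/\al}$, $\eta=\phi^{\nu/\be}$, $\xi=\phi^{\mu/\al\wedge\nu/\be}$, $\rho=\phi^{\mu/\al\vee\nu/\be}$ and checks the hypothesis \eqref{eq:AD-condition} of the multivariate AD inequality (Theorem~\ref{thm:multi-q-AD}) directly; the only nontrivial point is precisely the column-strictness of $S\vee T$ on $(\mu\wedge\nu)/(\al\wedge\be)$, which requires a short case analysis depending on whether a given cell lies in $(\mu\wedge\nu)/\al$ or in $(\mu\wedge\nu)/\be$. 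With modular functions $r_i(T)=m_i(T)$, the four sums $\ze_{\langle\bbz,\rb\rangle}(\cL')$, etc., become the Schur polynomials times monomial prefactors $z_1^{|\al|}z_N^{|\la|-|\mu|}$ and the like, which cancel across the two sides, giving \eqref{eq:LP2}. So your instinct to invoke the multivariate AD framework is right, and your bookkeeping for the four ideals $A=\al$, $B=\be$, $C=\la/\mu$, $D=\la/\nu$ matches the paper; the missing idea is that the strictness must be encoded in the AD test functions and re-verified against $\vee,\wedge$, rather than removed by a shift bijection, which is incompatible with the $\rK_\bbz$ weight.
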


\smallskip

The original proof is completely combinatorial and uses an explicit injection.
For completeness, we include a short argument showing how the LP inequality implies
the Bj\"orner's and the generalized Bj\"orner's inequality.

\smallskip

\begin{proof}[Proof of \. \eqref{eq:LP2} \. $\Longrightarrow$ \. \eqref{eq:Bjo-gen}]
Recall the following analogue of \eqref{eq:Sta-PP} for skew Schur functions:
\begin{equation}\label{eq:P-part}
s_{\la/\tau}(1,q,q^2,\ldots) \ = \ \frac{1}{(1-q)(1-q^2)\cdots (1-q^{|\la/\tau|})} \,\. \sum_{T \ts \in \ts \SYT(\la/\tau)} \. q^{\maj(T)}\.,
\end{equation}
where \ts $\maj: \SYT(\la/\tau) \to \nn$ \ts is the \emph{major index} \ts of a tableau, see e.g.\ \cite[Thm~7.19.11]{EC}.

Let \ts $n:=|\mu/\al|+|\nu/\be|$. Substituting \eqref{eq:P-part} into each of the four Schur
functions in the LP inequality \eqref{eq:LP2}, multiplying both sides by \. $(1-q)^{n}$ \.
and letting \ts $q\to 1$, gives the generalized Bj\"orner's inequality \eqref{eq:Bjo}.
\end{proof}

\begin{rem}  \label{r:LPP}
The following truly remarkable \defn{Lam--Postnikov--Pylyavskyy inequality} \ts
further extended \eqref{eq:LP2} and resolved several open problems in the area:
\begin{equation}\label{eq:LPP2}
	s_{\mu/\al \ts \vee \ts \nu/\be} \. \cdot \.  s_{\mu/\al \ts \wedge \ts \nu/\be} \, \
\ges_s \, \ s_{\mu/\al} \. \cdot \. s_{\nu/\be}\,.
\end{equation}
Here ``$\geqslant_s$'' stands for \defn{Schur positivity}, which is saying that
the difference is a nonnegative sum of Schur functions. Although we will not need
this extension, it does give a more conceptual proof of Bj\"orner's inequality.

In a different direction, Richards~\cite{Rich} gave an analytic generalization of \eqref{eq:LP1}
for real \ts $\la,\mu\in \rr^\ell$ \ts and the determinant definition of Schur
polynomials.  It would be natural to conjecture that \eqref{eq:LPP2} also generalizes
to this setting.
\end{rem}

\smallskip

\begin{proof}[Proof of \. \eqref{eq:LPP2} \. $\Longrightarrow$ \. \eqref{eq:Bjo}]
Recall that for all \ts $\mu \vdash k$, \ts $\nu \vdash n-k$, we have:
$$
s_\mu \cdot s_\nu \, = \, \sum_{\la\vdash n} \. c^{\la}_{\mu\nu} \ts s_{\la}
\qquad \text{and} \qquad
\chi^\mu \otimes \chi^\nu \uparrow_{S_k \times S_{n-k}}^{S_n} \ = \ \sum_{\la\vdash n} \. c^{\la}_{\mu\nu} \. \chi^\la\.,
$$
where \ts $c^{\la}_{\mu\nu}$ \ts are the \defn{Littlewood--Richardson coefficients},
see e.g.~\cite[$\S$4.9]{Sag01}. Equating dimensions in the second equality gives:
$$
f(\mu) \cdot f(\nu) \, = \,
\sum_{\la\vdash n} \. c^{\la}_{\mu\nu} \. f(\la)\ts.
$$
Thus \. $\vp: s_\la \to f(\la)$ \. is a ring homomorphism from the ring
of symmetric function to~$\qqq$ which maps Schur positive symmetric function
to~$\qqq_+$\ts.  Applying \ts $\vp$ \ts to the inequality \eqref{eq:LPP2}
for \ts $\al=\be=\emp$ \ts gives the desired inequality \eqref{eq:Bjo}.
\end{proof}

\smallskip

\subsection{RPP variation}\label{ss:P-part-LP-RPP-var}
The following RPP variation is an easy corollary of the LP inequality \eqref{eq:LP2}:

\smallskip

\begin{cor}\label{c:LP-RPP1}
Let \ts $\mu$ \ts and \ts $\nu$ \ts be Young diagrams and let \ts $t\ge 0$.  Then:
	\begin{equation}\label{eq:LP-RPP0}
\Om(\mu\vee \nu,t) \. \cdot \.  \Om(\mu\wedge \nu,t)  \
\geq  \ \Om(\mu,t) \. \cdot \.  \Om(\nu,t).
	\end{equation}
Similarly, for the $q$-statistics we have:
	\begin{equation}\label{eq:LP-RPP-q}
\Om_q(\mu\vee \nu,\infty) \. \cdot \.  \Om_q(\mu\wedge \nu,\infty)  \
\geqslant_q  \ \Om_q(\mu,\infty) \. \cdot \.  \Om_q(\nu,\infty).
	\end{equation}
More generally, we have:
	\begin{equation}\label{eq:LP-RPP1}
\Om_q(\mu\vee \nu,t) \. \cdot \.  \Om_q(\mu\wedge \nu,t)  \
\geqslant_q  \ \Om_q(\mu,t) \. \cdot \.  \Om_q(\nu,t).
	\end{equation}
\end{cor}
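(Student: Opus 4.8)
The plan is to prove the strongest of the three statements, \eqref{eq:LP-RPP1}, and read off the other two as degenerations. Setting $q=1$ in \eqref{eq:LP-RPP1} gives \eqref{eq:LP-RPP0}: for a fixed finite $t$ each $\Omega_q(\cP,t)$ is an honest polynomial in $q$, so the relation $\geqslant_q$ specializes to $\geq$ at $q=1$. Letting $t\to\infty$ in \eqref{eq:LP-RPP1} gives \eqref{eq:LP-RPP-q}: since $\PP(\cP_\mu,t)$ increases to $\PP(\cP_\mu)$ and a reverse plane partition $A$ with $|A|=k$ automatically has all entries $\le k$, each coefficient of $q^k$ stabilizes, so the (coefficientwise nonnegative) differences converge to a coefficientwise nonnegative power series.

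There is also a transparent direct derivation of \eqref{eq:LP-RPP-q} from \eqref{eq:LP2}, worth recording because it uses the Schur inequality literally. The column shift $A(i,j)\mapsto A(i,j)+(i-1)$ is a bijection from reverse plane partitions of shape $\mu$ to semistandard tableaux of shape $\mu$ with entries $\ge 0$, under which $|{\cdot}|$ increases by $c(\mu):=\sum_{(i,j)\in\mu}(i-1)=\sum_j\binom{\mu'_j}{2}$; hence $s_\mu(1,q,q^2,\dots)=q^{c(\mu)}\,\Omega_q(\cP_\mu,\infty)$ (equivalently this is \eqref{eq:P-part} together with the bijection $\Ec(\cP_\mu)\cong\SYT(\mu)$). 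In \eqref{eq:LP1} substitute $z_i\mapsto q^{i-1}$ for $i\le N$; this is a substitution of monomials with nonnegative coefficients, so $\geqslant_\bbz$ descends to $\geqslant_q$, and letting $N\to\infty$ the four principal specializations converge coefficientwise, giving $s_{\mu\vee\nu}(1,q,\dots)\,s_{\mu\wedge\nu}(1,q,\dots)\geqslant_q s_\mu(1,q,\dots)\,s_\nu(1,q,\dots)$. Because $\binom{\max(\mu'_j,\nu'_j)}{2}+\binom{\min(\mu'_j,\nu'_j)}{2}=\binom{\mu'_j}{2}+\binom{\nu'_j}{2}$ column by column, we have $c(\mu\vee\nu)+c(\mu\wedge\nu)=c(\mu)+c(\nu)$, so dividing through by that common power of $q$ (which preserves $\geqslant_q$) yields \eqref{eq:LP-RPP-q}.

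For the master inequality \eqref{eq:LP-RPP1} this Schur-specialization route breaks down: once entries are capped at $t$, no substitution $z_i\mapsto q^{e_i}$ turns $s_\mu(\bbz)$ into $\Omega_q(\cP_\mu,t)$, because after the column shift $\Omega_q(\cP_\mu,t)$ becomes a \emph{flagged} (staircase flag $b_i=t+i-1$) rather than ordinary Schur specialization, and already the plain counts disagree --- $|\RPP((2,1),1)|=5$, whereas $s_{(2,1)}(1^N)\in\{0,2,8,20,\dots\}$ for $N\ge 0$, so no number of variables works. Instead I would use the explicit cell-transfer injection of \cite{LP07} that \emph{proves} \eqref{eq:LP2}. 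That injection only redistributes the existing entries among the cells of the two input tableaux, so, run on reverse plane partitions, it gives an injection $\RPP(\mu)\times\RPP(\nu)\hookrightarrow\RPP(\mu\vee\nu)\times\RPP(\mu\wedge\nu)$ that preserves the total entry sum and does not raise the largest entry; it therefore restricts to an injection $\RPP(\mu,t)\times\RPP(\nu,t)\hookrightarrow\RPP(\mu\vee\nu,t)\times\RPP(\mu\wedge\nu,t)$, and summing $q^{|S|+|T|}$ over its domain and codomain is exactly \eqref{eq:LP-RPP1}.

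The only genuinely non-formal point --- hence the main obstacle --- is checking that the Lam--Pylyavskyy cell-transfer injection does not increase entries, so that it really does restrict to the entry-$\le t$ tableaux; everything else (the $q=1$ and $t\to\infty$ degenerations, preservation of $\geqslant$ under monomial substitution, the $c(\mu)$ bookkeeping) is routine. If one prefers not to reopen the combinatorics of \cite{LP07}, \eqref{eq:LP-RPP1} can instead be obtained by specializing $z_k\mapsto q^k$ in the multivariate inequality for $\rK_\bbz$ (Theorem~\ref{t:main-K}) proved later in the paper, since $\sum_k k\,m_k(A)=|A|$; but that is heavier than necessary at this stage.
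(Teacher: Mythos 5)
Your proposal is correct and essentially follows the paper's own route. The principal-specialization derivation of \eqref{eq:LP-RPP-q} from \eqref{eq:LP1}, together with the $c(\mu)$ bookkeeping, is exactly the proof the paper writes out (the paper's ``$\bbz\gets(q,q,\dots)$'' is a typo for the principal specialization $z_i\gets q^i$, which matches your $z_i\gets q^{i-1}$ and bijection $A(i,j)\mapsto A(i,j)+i$ up to the harmless shift into $\SSYT$ with entries $\ge 1$); and the deduction of \eqref{eq:LP-RPP1} that you set aside as ``heavier than necessary'' is in fact precisely the paper's: take $A\gets\mu$, $B\gets\nu$ in Lam--Pylyavskyy's Prop.~3.7 (Theorem~\ref{t:LP-poset}) to obtain Corollary~\ref{c:LP-RPP2}, then specialize $z_i\gets q^i$. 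In particular, the ``main obstacle'' you flag --- checking that the cell-transfer injection does not raise the largest entry --- is a non-issue: Prop.~3.7 is already stated for $\cP$-partitions with values in $\{0,\dots,N\}$, so the entry cap is built into the cited result and there is nothing to re-verify, and consequently this route is no heavier than the one you prefer. Your degenerations $q=1$ and $t\to\infty$ of \eqref{eq:LP-RPP1} are both fine and give the other two parts.
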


\smallskip

\begin{proof}[Proof of \. \eqref{eq:LP1} \. $\Longrightarrow$ \. \eqref{eq:LP-RPP-q}]
Setting \. $N\gets \infty$ \. and \. $\bbz = (z_1,z_2,\ldots) \gets (q,q,\ldots)$, we get:
\begin{equation}\label{eq:LP-RPP-Schur}
s_\la(q,q,\ldots) \, = \, \Om_q(\la,\infty) \cdot q^{n(\la)}\., \quad \text{where} \ \ \. n(\la) \. = \.
\sum_{(i,j)\in \la} \ts i\ts.
\end{equation}
Note that \. $n(\mu\vee \nu) + n(\mu\wedge \nu) = n(\mu) + n(\nu)$.  Substituting \eqref{eq:LP-RPP-Schur}
into \eqref{eq:LP1} and dividing both sides by \. $q^{n(\mu) + n(\nu)}$ \. gives \eqref{eq:LP-RPP-q}.
\end{proof}

\smallskip

\begin{cor}\label{c:LP-OP}
Let \ts $\cP=(X,\prec)$ \ts be a finite poset, let \ts $t\ge 0$,
and let \ts $A,B \ssu X$ \ts be lower ideals of~$P$.
Then:
\begin{equation}\label{eq:LP-OP1}
\Om(A\cup B,t) \. \cdot \. \Om(A\cap B,t) \ \geq \
\Om(A,t) \. \cdot \. \Om(B,t)\ts.
\end{equation}
More generally, we have:
\begin{equation}\label{eq:LP-OP2}
\Om_q(A\cup B,t) \. \cdot \. \Om_q(A\cap B,t) \ \geqslant_q \
\Om_q(A,t) \. \cdot \. \Om_q(B,t)\ts.
\end{equation}
\end{cor}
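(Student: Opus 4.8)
First I would set up the appropriate distributive lattice. A \ts $\cP$-partition \ts $f\colon X\to\{0,1,\dots,t\}$ \ts is the same datum as a weakly increasing chain \ts $I_1\subseteq I_2\subseteq\cdots\subseteq I_t$ \ts of lower ideals of \ts $\cP$, via \ts $I_j:=\{x\in X:f(x)\le j-1\}$. Let \ts $\Gamma$ \ts be the set of all such $t$-chains of lower ideals; it is a finite distributive lattice under componentwise inclusion, with join and meet given by componentwise union and intersection. For a lower ideal \ts $S\subseteq X$, let \ts $\Gamma_S\subseteq\Gamma$ \ts be the set of chains all of whose terms lie in \ts $S$. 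Since \ts $S$ \ts is itself a lower ideal, the lower ideals of the subposet on \ts $S$ \ts are precisely the lower ideals of \ts $\cP$ \ts contained in \ts $S$, so \ts $\Gamma_S$ \ts is in bijection with \ts $\PP(S,t)$ \ts and \ts $|\Gamma_S|=\Om(S,t)$; moreover \ts $\Gamma_S$ \ts is a sublattice of \ts $\Gamma$. Finally, \ts $\rho(m):=t\ts|X|-\sum_j|I_j|$ \ts is a nonnegative, modular weight on \ts $m=(I_j)\in\Gamma$ \ts (modularity being \ts $|I|+|I'|=|I\cup I'|+|I\cap I'|$), and a short computation gives \ts $\sum_{m\in\Gamma_S}q^{\rho(m)}=q^{\ts t(|X|-|S|)}\,\Om_q(S,t)$ (the shift merely compensating for measuring against \ts $X$ \ts rather than \ts $S$).

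Next I would feed this into the AD~inequality. Since \ts $A$, \ts $B$, \ts $A\cup B$, \ts $A\cap B$ \ts are all lower ideals, I can apply the AD~inequality (Theorem~\ref{thm:AD}) on \ts $\Gamma$ \ts with \ts $\alpha=\mathbf 1_{\Gamma_A}$, \ts $\beta=\mathbf 1_{\Gamma_B}$, \ts $\gamma=\mathbf 1_{\Gamma_{A\cup B}}$, \ts $\delta=\mathbf 1_{\Gamma_{A\cap B}}$. The four-functions hypothesis is immediate: if \ts $m=(I_j)\in\Gamma_A$ \ts and \ts $m'=(I'_j)\in\Gamma_B$, then \ts $m\vee m'=(I_j\cup I'_j)$ \ts has all terms in \ts $A\cup B$ \ts and \ts $m\wedge m'=(I_j\cap I'_j)$ \ts has all terms in \ts $A\cap B$, so \ts $\alpha(m)\,\beta(m')\le 1=\gamma(m\vee m')\,\delta(m\wedge m')$. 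Theorem~\ref{thm:AD} then yields \ts $\Om(A,t)\,\Om(B,t)\le\Om(A\cup B,t)\,\Om(A\cap B,t)$, which is~\eqref{eq:LP-OP1}. Running the identical step with Christofides's $q$-analogue of the AD~inequality (the single-variable case of Theorem~\ref{thm:multi-q-AD}) and the modular grading \ts $\rho$ \ts gives
\[
q^{\ts t(2|X|-|A|-|B|)}\,\Om_q(A,t)\,\Om_q(B,t)\ \leqslant_q\ q^{\ts t(2|X|-|A\cup B|-|A\cap B|)}\,\Om_q(A\cup B,t)\,\Om_q(A\cap B,t);
\]
since \ts $|A|+|B|=|A\cup B|+|A\cap B|$ \ts the two powers of \ts $q$ \ts agree, and cancelling them gives~\eqref{eq:LP-OP2} (which specializes to~\eqref{eq:LP-OP1} at \ts $q=1$).

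I expect the only real work to be the lattice setup, not the inequality: the point is to use the chains-of-ideals model — equivalently, to order \ts $\cP$-partitions by the opposite of the entrywise order — so that the four sets \ts $\Gamma_S$ \ts are genuine sublattices of the correct cardinalities \ts $\Om(S,t)$ \ts and so that \ts $\Gamma_A$ \ts and \ts $\Gamma_B$ \ts close up under \ts $\vee$ \ts and \ts $\wedge$ \ts into exactly \ts $\Gamma_{A\cup B}$ \ts and \ts $\Gamma_{A\cap B}$, making the four-functions hypothesis trivial. (One could use the entrywise order instead, but then \ts $\vee$ \ts must be matched with \ts $A\cap B$, which is less transparent. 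Alternatively, once they are available one may simply quote Theorems~\ref{t:main-OP} and~\ref{t:main-OP-multi}, which contain Corollary~\ref{c:LP-OP} as their case \ts $C=D=\varnothing$ \ts applied to the dual poset \ts $\cP^\ast$, using \ts $\Om(\cP,t)=\Om(\cP^\ast,t)$ \ts and the invariance of \ts $\geqslant_q$ \ts under coefficient-reversal.)
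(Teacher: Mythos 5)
Your proof is correct and rests on the same engine the paper uses: the AD inequality (and Christofides's $q$-analogue) applied to the distributive lattice of $\cP$-partitions. The reparametrization you chose is worth noting, though. The paper proves the more general Theorem~\ref{t:main-OP} on the lattice $\cL=\PP(\cP,t)$ with entrywise max/min, using indicator functions of the events ``$T\equiv 0$ on a lower ideal, $T\equiv t$ on an upper ideal'', which compute $\Om(X-A-C,t)$; extracting Corollary~\ref{c:LP-OP} from that theorem therefore needs a complementation/dual-poset step (equivalently the substitution $A,B\gets\varnothing$, $C\gets X-A$, $D\gets X-B$), which the paper's remark ``special case when $C=D=\varnothing$'' leaves to the reader. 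Your chains-of-ideals lattice $\Gamma$ is exactly $\cL$ with the order reversed, but your choice of $\Gamma_S$ (chains contained in $S$, i.e.\ $\cP$-partitions equal to $t$ off $S$) counts $\Om(S,t)$ directly, makes the four relevant sets sublattices closed under $\vee$ and $\wedge$ in the obvious way, and lands you on the corollary with no dualization. The modular weight $\rho(m)=t|X|-\sum_j|I_j|=|f|$ is the right grading for $\Om_q$, and the observation that $|A|+|B|=|A\cup B|+|A\cap B|$ makes the shift factors cancel is exactly what is needed. Your closing parenthetical about the dual-poset route is also accurate.
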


\smallskip

\begin{proof}[Proof of \. \eqref{eq:LP-OP1} \. $\Longrightarrow$ \. \eqref{eq:Fish-prob}]
Let \ts $t\to \infty$ \ts and apply \eqref{eq:OP-asy} to each term in \eqref{eq:LP-OP1}.
\end{proof}

\smallskip

Corollary~\ref{c:LP-OP} is a direct generalization of Corollary~\ref{c:LP-RPP1},
which follows by  taking \ts $\A\gets \mu$ \ts and \ts $B\gets \nu$.
Our next result is a multivariate generalization of Corollary~\ref{c:LP-RPP1}.

\smallskip

\begin{cor}\label{c:LP-RPP2}
Let \ts $\mu$ \ts and \ts $\nu$ \ts be Young diagrams and let \ts $N\ge 0$.  Then:
	\begin{equation}\label{eq:LP-RPP2}
	\RGF_{\mu\vee \nu}(\bbz) \. \cdot \.  \RGF_{\mu\wedge \nu}(\bbz)  \
\geqslant_{\bbz} \ \RGF_{\mu}(\bbz) \. \cdot \. \RGF_{\nu}(\bbz)\,,
	\end{equation}
where \. $\bbz = (z_0,z_1,\ldots,z_N)$.
\end{cor}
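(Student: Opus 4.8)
The plan is to obtain \eqref{eq:LP-RPP2} as the special case of the multivariate $\rK$-inequality for general posets (Theorem~\ref{t:main-K}) attached to the poset $\cP_\la$, mirroring the deduction of Corollary~\ref{c:Bjo-gen} from Theorem~\ref{t:main-LE} with $\fe$ replaced by $\RGF$ throughout. Concretely, I would set $\cP:=\cP_\la$ with $\la:=\mu\vee\nu$ and reuse the ideal choice \eqref{eq:subs-Young} of that proof in the straight-shape case $\al=\be=\varnothing$, namely $A:=\varnothing$, $B:=\varnothing$, $C:=\la/\mu$, $D:=\la/\nu$. These are lower, resp.\ upper, ideals of $\cP$ (recall that the lower ideals of $\cP_\la$ are precisely the sub-Young diagrams $\tau\subseteq\la$, and the upper ideals precisely the skew shapes $\la/\tau$), and the hypotheses $A\cap C=B\cap D=\varnothing$ of Theorem~\ref{t:main-K} hold trivially.

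The same elementary set computation as in the proof of Corollary~\ref{c:Bjo-gen} then gives $X-A-C=\mu$, $X-B-D=\nu$, $X-V=\mu\wedge\nu$ and $X-W=\mu\vee\nu$, the last because $W=(\la/\mu)\cap(\la/\nu)=\la/(\mu\vee\nu)=\varnothing$. Moreover the subposet of $\cP_\la$ on any sub-Young diagram $\tau$ is exactly $\cP_\tau$, so the corresponding term $\rK_\bbz(\cP_\tau,N)$ is, by the definition recalled in $\S$\ref{ss:notation-Schur}, just $\RGF_\tau(\bbz)$. Substituting these four identifications into the conclusion of Theorem~\ref{t:main-K} produces exactly \eqref{eq:LP-RPP2}.

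Thus all the content sits in Theorem~\ref{t:main-K}, which is where the main obstacle lies: its proof, carried out in Section~\ref{s:proof-main}, goes through the multivariate AD inequality (Theorem~\ref{thm:multi-q-AD}). I would also point out that, unlike for the univariate statistics of Corollaries~\ref{c:LP-RPP1} and~\ref{c:LP-OP}, there seems to be no shortcut here through the Lam--Pylyavskyy inequality \eqref{eq:LP2}: for a straight shape $\la$ the reverse plane partition generating function $\RGF_\la$ need not even be a symmetric polynomial (already for $\la=(2,1)$), hence is not a combination of Schur functions, so a genuinely coefficient-wise multivariate inequality for posets is needed rather than a symmetric-function one.
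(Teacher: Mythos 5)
Your proof is correct, and it follows the same route as the paper: reduce the Young-diagram statement to the multivariate $\rK$-inequality for general posets via the identification $\rK_\bbz(\cP_\tau,N)=\RGF_\tau(\bbz)$ recorded at the end of $\S$\ref{ss:notation-Schur}. The only (immaterial) difference is in the instantiation: the paper deduces \eqref{eq:LP-RPP2} from Lam--Pylyavskyy's Theorem~\ref{t:LP-poset} by taking $A\gets\mu$, $B\gets\nu$ as lower ideals of $\cP_{\mu\vee\nu}$ (the $C=D=\varnothing$ case of Theorem~\ref{t:main-K}), whereas you take $A=B=\varnothing$ and $C=\la/\mu$, $D=\la/\nu$ as upper ideals, the dual specialization of Theorem~\ref{t:main-K}; both give identical conclusions and your set computations are right. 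Your closing observation that $\RGF_\la$ is not symmetric, so that no detour through \eqref{eq:LP2} is possible, is also correct: $z_0$ plays a distinguished role since RPP entries may equal $0$, and already $\RGF_{(2,1)}(z_0,z_1)=z_0^3+2z_0^2z_1+z_0z_1^2+z_1^3$ is asymmetric.
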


\smallskip

\begin{proof}[Proof of \. \eqref{eq:LP-RPP2} \. $\Longrightarrow$ \. \eqref{eq:LP-RPP1}]
Let \ts $N\gets t$, and set \ts $z_i \gets q^i$ \ts for all \ts $0\le i \le N$.
\end{proof}

\smallskip

This result is implicit in \cite{LP07} and follows from the following general theorem:

\smallskip

\begin{thm}[{\rm \defn{Lam--Pylyavskyy inequality for multivariate order polynomials} \cite[Prop.~3.7]{LP07}}{}] \label{t:LP-poset}
Let \ts $\cP=(X,\prec)$ \ts be a finite poset,  let \ts $A,B \ssu X$ \ts be lower ideals of~$P$, and let $N\geq 0$.
Then:
\begin{equation}\label{eq:LP-RPP3}
\rK_\bbz(A\cup B,N) \. \cdot \. \rK_\bbz(A\cap B,N) \ \geqslant_\bbz \
\rK_\bbz(A,N) \. \cdot \. \rK_\bbz(B,N)\ts.
\end{equation}
\end{thm}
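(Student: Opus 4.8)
The plan is to recast \eqref{eq:LP-RPP3} as a log\nobreakdash-supermodularity statement on a single finite distributive lattice and then feed it into the multivariate Ahlswede--Daykin inequality (Theorem~\ref{thm:multi-q-AD}), which I will use in the form: if $L$ is a finite distributive lattice and $\varphi_1,\varphi_2,\varphi_3,\varphi_4\colon L\to\rr_+[\bbz]$ satisfy $\varphi_1(u)\,\varphi_2(v)\leqslant_\bbz\varphi_3(u\vee v)\,\varphi_4(u\wedge v)$ for all $u,v\in L$, then $\big(\sum_{u}\varphi_1(u)\big)\big(\sum_{u}\varphi_2(u)\big)\leqslant_\bbz\big(\sum_{u}\varphi_3(u)\big)\big(\sum_{u}\varphi_4(u)\big)$.

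First I would invoke the standard dictionary between $\cP$-partitions and multichains of lower ideals. Let $\Jc=\Jc(\cP)$ be the distributive lattice of lower ideals of $\cP$ ordered by inclusion, with meet $\cap$ and join $\cup$; because $A$ is a lower ideal of $\cP$, the lower ideals of the subposet $(A,\prec)$ are exactly the members of $\Jc$ contained in $A$. For $f\in\PP(A,N)$ put $L_j:=f^{-1}(\{0,\ldots,j\})$ for $-1\le j\le N$ (so $L_{-1}=\varnothing$); then $\varnothing=L_{-1}\subseteq L_0\subseteq\cdots\subseteq L_N=A$ is a multichain in $\Jc$, the map $f\mapsto(L_0,\ldots,L_N)$ is a bijection onto all such multichains, and $m_i(f)=|L_i|-|L_{i-1}|$, so that
\[
\rK_\bbz(A,N)\ =\ \sum_{\varnothing=L_{-1}\subseteq L_0\subseteq\cdots\subseteq L_N=A}\ \prod_{i=0}^{N}z_i^{\,|L_i|-|L_{i-1}|},
\]
and analogously for $B$, for $A\cup B$, and for $A\cap B$, where $A\cup B=A\vee B$ and $A\cap B=A\wedge B$ in $\Jc$.

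Next I would place all four sums on one lattice. Let $\cM:=\{(L_0,\ldots,L_N)\in\Jc^{\,N+1}:L_0\subseteq L_1\subseteq\cdots\subseteq L_N\}$, which is a sublattice of $\Jc^{\,N+1}$, hence a finite distributive lattice with coordinatewise meet and join. With the convention $L_{-1}:=\varnothing$, set $w(L_0,\ldots,L_N):=\prod_{i=0}^{N}z_i^{\,|L_i|-|L_{i-1}|}$, a single monomial. The key point is that $w$ is log-modular on $\cM$: applying $|S\cup T|+|S\cap T|=|S|+|T|$ in each coordinate gives $w(u)\,w(v)=w(u\vee v)\,w(u\wedge v)$ for all $u,v\in\cM$. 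Now apply the multivariate AD inequality on $\cM$ to $\varphi_A:=w\cdot\mathbf{1}[L_N=A]$, $\varphi_B:=w\cdot\mathbf{1}[L_N=B]$, $\varphi_{A\cup B}:=w\cdot\mathbf{1}[L_N=A\cup B]$, $\varphi_{A\cap B}:=w\cdot\mathbf{1}[L_N=A\cap B]$, each lying in $\rr_+[\bbz]$. By the displayed formula, $\sum_{u\in\cM}\varphi_A(u)=\rK_\bbz(A,N)$, and likewise for the other three. To verify the hypothesis, fix $u,v\in\cM$: if $\varphi_A(u)\,\varphi_B(v)\ne 0$ then the last coordinate of $u$ is $A$ and that of $v$ is $B$, so the last coordinate of $u\vee v$ is $A\cup B$ and that of $u\wedge v$ is $A\cap B$, whence by log-modularity
\[
\varphi_A(u)\,\varphi_B(v)\ =\ w(u)\,w(v)\ =\ w(u\vee v)\,w(u\wedge v)\ =\ \varphi_{A\cup B}(u\vee v)\,\varphi_{A\cap B}(u\wedge v);
\]
and if $\varphi_A(u)\,\varphi_B(v)=0$ the required bound $\varphi_A(u)\,\varphi_B(v)\leqslant_\bbz\varphi_{A\cup B}(u\vee v)\,\varphi_{A\cap B}(u\wedge v)$ is immediate since the right-hand side has nonnegative coefficients. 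The multivariate AD inequality then yields $\rK_\bbz(A,N)\,\rK_\bbz(B,N)\leqslant_\bbz\rK_\bbz(A\cup B,N)\,\rK_\bbz(A\cap B,N)$, which is \eqref{eq:LP-RPP3}.

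The only substantive ingredient is the multivariate AD inequality itself (Theorem~\ref{thm:multi-q-AD}), established separately in Section~\ref{s:AD-multi}; granting it, everything above is bookkeeping. The one idea that carries the argument is to let the top ideal $L_N$ of the multichain \emph{vary freely} over $\Jc$: this makes $\rK_\bbz(A,N)$, $\rK_\bbz(B,N)$, $\rK_\bbz(A\cup B,N)$, and $\rK_\bbz(A\cap B,N)$ simultaneously visible as partial sums of one log-modular weight on one lattice, and it works precisely because $A\cup B$ and $A\cap B$ are the join and meet of $A$ and $B$ in $\Jc$, so the coordinatewise $\vee$ and $\wedge$ on $\cM$ carry the ``top${}=A$'' and ``top${}=B$'' slices to the ``top${}=A\cup B$'' and ``top${}=A\cap B$'' slices. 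Accordingly, I expect the main (and essentially the only) obstacle to be proving the multivariate AD inequality, which is not specific to this theorem; by contrast, the classical AD inequality applied to the same set-up would only give the weaker ordinary (pointwise) inequality among these generating functions.
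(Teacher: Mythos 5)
Your proof is correct, and it is essentially the paper's approach: the paper does not prove Theorem~\ref{t:LP-poset} directly but obtains it as the case $A=B=\varnothing$, $C=X-A_0$, $D=X-B_0$ of Theorem~\ref{t:main-K}, whose proof in Section~\ref{s:proof-main} applies the multivariate AD inequality (Theorem~\ref{thm:multi-q-AD}) to the lattice $\cL=\PP(\cP,N)$ with modular functions $r_i(T)=m_i(T)$ and the indicator functions of \eqref{eq:Fish-ftn}. Your lattice $\cM$ of multichains $L_0\subseteq\cdots\subseteq L_N$ in $\Jc(\cP)$ is exactly the multichain encoding of that setup (isomorphic to $\PP(\cP,N{+}1)$), your $r_i(u)=|L_i|-|L_{i-1}|$ are the same modular functions $m_i$ under that encoding, and your ``top $=A$'' indicator plays the role of the paper's ``$T\equiv N$ on $X-A$'' indicator; letting $L_N$ vary freely neatly avoids the residual monomial factor $q_N^{|C|}$ that the paper's bookkeeping has to cancel. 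One small caution: the ``form'' of the multivariate AD inequality you state at the outset, for arbitrary $\rr_+[\bbz]$-valued $\varphi_i$ with a coefficientwise hypothesis, is strictly more general than Theorem~\ref{thm:multi-q-AD} and is not what the paper proves; your application, however, is legitimately an instance of Theorem~\ref{thm:multi-q-AD} precisely because each $\varphi$ is an $\rr_+$-valued indicator times the single monomial $\bbz^{\rb(u)}$ with $\rb$ modular, and you do verify that modularity -- so you should simply invoke the theorem in that exact shape rather than via the broader (unestablished) paraphrase.
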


\smallskip

This is the most general version of the LP~inequality that we discuss in this
paper.  Note that \. \eqref{eq:LP-RPP3} \ts $\Longrightarrow$ \ts \eqref{eq:LP-RPP2} \. by taking
\ts $\A\gets \mu$ \ts and \ts $B\gets \nu$.

\smallskip

\begin{rem}\label{r:LP-RPP}
As we mention in the introduction, the ultimate Lam--Pylyavskyy generalization
uses the meet and join operations which are incompatible with those we employ in
this paper.  They are in fact, noncommutative and designed to allow the
``cell transfer'' direct injection.

Notably, \eqref{eq:LP2} does not follow from \eqref{eq:LP-RPP3}, but from
the proof of this ultimate Lam--Pylyavskyy generalization which happens to
apply to skew shapes.  We give a more streamlined derivation of \eqref{eq:LP2}
from our generalization below.
\end{rem}


\smallskip

\subsection{Main results}\label{ss:P-part-main}
We begin with the order polynomial extension of the generalized Fishburn's inequality
(Theorem~\ref{t:main-LE}) and the Lam--Pylyavskyy order polynomial inequality (Corollary~\ref{c:LP-OP}).

\smallskip

\begin{thm}\label{t:main-OP}
Let \ts $\cP=(X,\prec)$ \ts be a finite poset. Let \ts $A,B \subseteq X$ \ts be lower ideals,
and let \ts $C,D \subseteq X$ \ts be upper ideals of~$\cP$, such that \.
$A \cap C = B \cap D = \varnothing$.
Then:
\begin{equation}\label{eq:main-OP1}
\Om(X-V,t) \.\cdot\. \Om(X-W,t) \ \geq \ \Om(X-A-C,t) \.\cdot\. \Om(X-B-D,t)\ts,
\end{equation}
where \. $V:=(A \cap B) \cup (C \cup D)$ \. and \. $W:=(A \cup B) \cup (C \cap D)$.
More generally, we have:
\begin{equation}\label{eq:main-OP2}
\Om_q(X-V,t) \.\cdot\. \Om_q(X-W,t) \ \geqslant_q \ \Om_q(X-A-C,t) \.\cdot\. \Om_q(X-B-D,t)\ts.
\end{equation}
\end{thm}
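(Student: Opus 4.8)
The plan is to realize the four order polynomials in \eqref{eq:main-OP1} as cardinalities of subsets of one finite distributive lattice and then to invoke the AD inequality (Theorem~\ref{thm:AD}) with trivial weight data; the $q$-refinement \eqref{eq:main-OP2} then follows by running the identical argument with the weight $q^{|\cdot|}$ and the $q$-analogue of AD. Fix $t\ge 0$ and let $\mathcal{L}$ be the set of order-preserving maps $f\colon X\to\{0,1,\ldots,t\}$, ordered componentwise. Since the pointwise minimum and pointwise maximum of two order-preserving maps are again order-preserving, $\mathcal{L}$ is a sublattice of the product of chains $\{0,\ldots,t\}^{X}$, hence a finite distributive lattice whose meet and join are $\min$ and $\max$.

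Next I would set up a dictionary between $\PP(Q,t)$, for each subposet $Q\in\{X-A-C,\,X-B-D,\,X-V,\,X-W\}$, and a subset of $\mathcal{L}$ cut out by freezing coordinates. Observe that $X-A-C$ is $X$ with the lower ideal $A$ and the upper ideal $C$ deleted, and that because $A\cap C=\varnothing$ (one of the hypotheses) every order-preserving map on $X-A-C$ extends uniquely to an element of $\mathcal{L}$ that is $\equiv 0$ on $A$ and $\equiv t$ on $C$; order-preservation of the extension is automatic precisely because $A$ is a lower ideal, $C$ is an upper ideal, and $A\cap C=\varnothing$. The same recipe applies to the other three sets, since $A\cap B$ and $A\cup B$ are lower ideals, $C\cup D$ and $C\cap D$ are upper ideals, and $(A\cap B)\cap(C\cup D)=(A\cup B)\cap(C\cap D)=\varnothing$ follows from $A\cap C=B\cap D=\varnothing$. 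Writing $\mathcal{X},\mathcal{Y},\mathcal{Z}_{V},\mathcal{Z}_{W}\subseteq\mathcal{L}$ for the four resulting subsets --- thus $\mathcal{X}=\{f\in\mathcal{L}: f|_{A}\equiv 0,\, f|_{C}\equiv t\}$ and similarly for the rest --- we obtain $\Om(X-A-C,t)=|\mathcal{X}|$, $\Om(X-B-D,t)=|\mathcal{Y}|$, $\Om(X-V,t)=|\mathcal{Z}_{V}|$, $\Om(X-W,t)=|\mathcal{Z}_{W}|$. A short check then gives the crucial containments: if $f\in\mathcal{X}$ and $g\in\mathcal{Y}$, then $\max(f,g)$ is $\equiv 0$ on $A\cap B$ (both $f$ and $g$ are) and $\equiv t$ on $C\cup D$ ($f$ is $t$ on $C$ and $g$ is $t$ on $D$), so $\mathcal{X}\vee\mathcal{Y}\subseteq\mathcal{Z}_{V}$; dually $\mathcal{X}\wedge\mathcal{Y}\subseteq\mathcal{Z}_{W}$.

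Now I would apply the AD inequality (Theorem~\ref{thm:AD}) on $\mathcal{L}$ with all four functions equal to the constant $1$ --- its hypothesis $1\cdot 1\le 1\cdot 1$ being trivial --- to get $|\mathcal{X}|\,|\mathcal{Y}|\le|\mathcal{X}\vee\mathcal{Y}|\,|\mathcal{X}\wedge\mathcal{Y}|$, and combine with the containments above to obtain $|\mathcal{X}|\,|\mathcal{Y}|\le|\mathcal{Z}_{V}|\,|\mathcal{Z}_{W}|$, which is \eqref{eq:main-OP1}. For \eqref{eq:main-OP2} the plan is to weight $f\in\mathcal{L}$ by $q^{|f|}$, where $|f|:=\sum_{x\in X}f(x)$. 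Since $\min$ and $\max$ act coordinatewise, $|\cdot|$ is a valuation on $\mathcal{L}$, that is, $|f|+|g|=|f\vee g|+|f\wedge g|$, so $q^{|f|}q^{|g|}=q^{|f\vee g|}q^{|f\wedge g|}$ and the log-supermodularity hypothesis of the $q$-AD inequality (Christofides \cite{Chri}, or the multivariate version, Theorem~\ref{thm:multi-q-AD}) holds with equality. It yields $\big(\sum_{f\in\mathcal{X}}q^{|f|}\big)\big(\sum_{f\in\mathcal{Y}}q^{|f|}\big)\;\leqslant_{q}\;\big(\sum_{f\in\mathcal{Z}_{V}}q^{|f|}\big)\big(\sum_{f\in\mathcal{Z}_{W}}q^{|f|}\big)$, where the passage to $\mathcal{Z}_{V},\mathcal{Z}_{W}$ uses the containments together with the fact that enlarging the index set of a sum of monomials preserves $\leqslant_{q}$. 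Finally, under the freezing bijection $\sum_{f\in\mathcal{X}}q^{|f|}=q^{t|C|}\,\Om_{q}(X-A-C,t)$, and analogously for $\mathcal{Y},\mathcal{Z}_{V},\mathcal{Z}_{W}$ with exponents $t|D|$, $t|C\cup D|$, $t|C\cap D|$; since $|C\cup D|+|C\cap D|=|C|+|D|$, the monomial prefactors on the two sides coincide and cancel, which is exactly \eqref{eq:main-OP2} (and its specialization at $q=1$ recovers \eqref{eq:main-OP1}).

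The one genuinely load-bearing step is the second one: choosing the lattice $\mathcal{L}$ of $t$-bounded order-preserving maps and verifying that each of the four subposets corresponds to freezing its deleted lower ideal to the minimum value and its deleted upper ideal to the maximum value, with the meet and join of the two inner subsets $\mathcal{X},\mathcal{Y}$ landing inside the two outer subsets $\mathcal{Z}_{V},\mathcal{Z}_{W}$; this is exactly where the disjointness hypotheses $A\cap C=B\cap D=\varnothing$ are used. Once the dictionary is in place, AD (respectively $q$-AD) is applied with the most degenerate possible weight data, so no further difficulty arises beyond the elementary $q$-bookkeeping, which rests on $|C\cup D|+|C\cap D|=|C|+|D|$.
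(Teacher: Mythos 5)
Your proposal is correct and takes essentially the same route as the paper: you work on the same lattice of $t$-bounded order-preserving maps, freeze the deleted lower ideals to $0$ and the deleted upper ideals to $t$, verify exactly the same containment of joins and meets, and invoke the AD inequality (your ``constant weights on frozen subsets'' phrasing is just a repackaging of the paper's indicator functions $\alpha,\beta,\gamma,\delta$ applied with $X=Y=\cL$). For the $q$-refinement the paper defers to Christofides's $q$-AD (Remark~\ref{r:AD-q}) or to the multivariate AD via Theorem~\ref{t:main-OP-multi}, whereas you carry out the single-variable computation directly with the modular function $f\mapsto|f|$ and the cancellation $|C\cup D|+|C\cap D|=|C|+|D|$; this is a correct and slightly more self-contained rendering of the same idea.
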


\smallskip

Corollary~\ref{c:LP-OP} is a special case of the theorem when \ts $C=D= \emp$.

\smallskip

\begin{proof}[Proof of \. \eqref{eq:main-OP1} \. $\Longrightarrow$ \. \eqref{eq:main-LE}]
Let \ts $t\to \infty$ \ts and apply \eqref{eq:OP-asy} to each term in \eqref{eq:main-OP1}.
\end{proof}

\smallskip

Here is our most general result in this direction, and the ultimate multivariate
generalization of Fishburn's inequality (Theorem~\ref{t:Fish}).

\smallskip

\begin{thm}\label{t:main-OP-multi}
Let \ts $\cP=(X,\prec)$ \ts be a finite poset. Let \ts $A,B \subseteq X$ \ts be lower ideals,
and let \ts $C,D \subseteq X$ \ts be upper ideals of~$\cP$, such that \.
$A \cap C = B \cap D = \varnothing$.
Then:
\begin{equation}\label{eq:main-OP3}
\Om_\qb(X-V,t) \.\cdot\. \Om_\qb(X-W,t) \ \geqslant_\qb \ \Om_\qb(X-A-C,t) \.\cdot\. \Om_\qb(X-B-D,t)\ts,
\end{equation}
where \. $V:=(A \cap B) \cup (C \cup D)$ \. and \. $W:=(A \cup B) \cup (C \cap D)$.
\end{thm}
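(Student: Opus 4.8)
The plan is to deduce Theorem~\ref{t:main-OP-multi} from the multivariate AD~inequality (Theorem~\ref{thm:multi-q-AD}), mimicking the structure of the single-variable proof of Theorem~\ref{t:main-OP} but keeping track of the finer $\qb$-weights. The natural setting is the distributive lattice $\cL$ of order ideals of a suitable poset. Since $A, B$ are lower ideals and $C, D$ are upper ideals with $A \cap C = B \cap D = \varnothing$, the complements $X - A - C$ and $X - B - D$ are themselves "convex" (interval) subsets of $X$, and the key combinatorial identities $V = (A \cap B) \cup (C \cup D)$, $W = (A \cup B) \cup (C \cap D)$ encode exactly a meet/join pair: one checks that $(X - A - C) \cup (X - B - D) = X - V$ and $(X - A - C) \cap (X - B - D) = X - W$ (using $A \cap C = B \cap D = \varnothing$). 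So the statement has the shape $\Om_\qb(S \cup T) \cdot \Om_\qb(S \cap T) \geqslant_\qb \Om_\qb(S) \cdot \Om_\qb(T)$ for the two convex sets $S = X-A-C$ and $T = X-B-D$, where union and intersection are the lattice operations on convex subsets inherited from the ideal lattice.

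First I would set up the four functions. Fix $t$, and let $\Omega$ be the lattice (a finite distributive lattice) of all maps $f: X \to \{0,1,\ldots,t\}$ ordered coordinatewise, or more precisely restrict attention to the sublattice generated by $\PP(S,t)$, $\PP(T,t)$ and their meets and joins. For a $\cP$-partition $f$ on a convex set $U$, define its weight $\wt(f) := \prod_{x \in U} q_x^{f(x)}$. The four functions $\alpha, \beta, \gamma, \delta$ assign, to a map $f$ in the ambient lattice, the monomial weight $\wt(f)$ if $f$ (suitably restricted/extended by a canonical convention) lies in $\PP(S,t)$, $\PP(T,t)$, $\PP(S\cup T,t)$, $\PP(S\cap T,t)$ respectively, and $0$ otherwise. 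The sums of these four functions over the lattice are precisely the four $\Om_\qb$ terms in \eqref{eq:main-OP3}. The content of the AD~inequality is that $\sum\gamma \cdot \sum\delta \geqslant \sum\alpha \cdot \sum\beta$ once we verify the pointwise hypothesis $\alpha(f)\,\beta(g) \,\leqslant\, \gamma(f\vee g)\,\delta(f\wedge g)$ for all $f,g$.

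The main obstacle — and the only real work — is verifying that pointwise hypothesis, i.e. that whenever $f \in \PP(S,t)$ and $g \in \PP(T,t)$, the coordinatewise max $f \vee g$ restricts to a valid $\cP$-partition on $S \cup T$, the coordinatewise min $f \wedge g$ restricts to a valid one on $S \cap T$, and the weights multiply correctly: $\wt(f)\,\wt(g) = \wt(f\vee g)\,\wt(f\wedge g)$. The weight identity is immediate coordinatewise since $a + b = \max(a,b) + \min(a,b)$, provided one is careful about which coordinates are summed over — this is where $(S\cup T) \sqcup (S \cap T)$ must match $S \sqcup T$ as multisets of coordinates, which again follows from $A \cap C = B \cap D = \varnothing$. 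The order-preservation of $f\vee g$ on $S \cup T$ is the subtle point: one must check that for $x \prec y$ in $S \cup T$, $\max(f(x),g(x)) \le \max(f(y),g(y))$, where $f,g$ are extended off their domains by a harmless convention (e.g. extend $f$ by $0$ on $S$-lower-parts and by $t$ on $S$-upper-parts, matching the ideal structure). This uses precisely that $S$ and $T$ are convex and that their "missing" pieces $A, B$ (lower) and $C, D$ (upper) interact via union/intersection as dictated by $V$ and $W$; the compatibility of the extension conventions across the four sets is exactly what the combinatorial lattice identity for distributive lattices guarantees. Once this pointwise check is in place, the theorem is immediate from Theorem~\ref{thm:multi-q-AD}, and Theorem~\ref{t:main-OP} follows by specializing all $q_i$ to a single $q$ (and \eqref{eq:main-OP1} by further setting $q=1$, or via the asymptotic \eqref{eq:OP-asy} as already noted).
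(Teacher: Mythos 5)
Your overall plan — apply the multivariate AD inequality (Theorem~\ref{thm:multi-q-AD}) with the same four indicator-type functions used for Theorem~\ref{t:main-OP}, together with the modular functions $r_i(T) = T(x_i)$ — is exactly what the paper does, so in outline you are on the same track. However, there is one concrete misstatement in your setup that would derail the argument if taken literally, and it is worth spelling out because it affects how you define $\gamma$ and $\delta$. You write ``one checks that $(X - A - C) \cup (X - B - D) = X - V$ and $(X - A - C) \cap (X - B - D) = X - W$ (using $A \cap C = B \cap D = \varnothing$)'', treating $\cup,\cap$ as ordinary set operations. This is false. Take $X=\{a,b,c\}$ an antichain, $A=\{a\}$, $B=\{b\}$, $C=\{c\}$, $D=\varnothing$; then $S=X-A-C=\{b\}$, $T=X-B-D=\{a,c\}$, $S\cup T=X$, $S\cap T=\varnothing$, while $X-V=\{a,b\}$ and $X-W=\{c\}$. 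So if your $\gamma,\delta$ really test membership in $\PP(S\cup T,t)$ and $\PP(S\cap T,t)$ with set-theoretic $\cup,\cap$, the AD conclusion would not be the inequality~\eqref{eq:main-OP3}. The parenthetical caveat at the end of that sentence (``where union and intersection are the lattice operations on convex subsets'') is the correct reading, but then there is nothing to ``check'' and the hypothesis $A\cap C = B\cap D = \varnothing$ plays no role in that step; the confusion between the two suggests the decomposition isn't yet nailed down.

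The clean way to set this up, which is what the paper does, is to forget the lattice of convex subsets entirely and work on the single distributive lattice $\cL=\PP(\cP,t)$ of order-preserving maps $T:X\to\{0,\dots,t\}$ with coordinatewise max/min. Let $\alpha,\beta,\gamma,\delta$ be the $\{0,1\}$-valued indicators from~\eqref{eq:Fish-ftn} ($\alpha(T)=1$ iff $T\equiv 0$ on $A$ and $T\equiv t$ on $C$, etc.), whose pointwise AD hypothesis~\eqref{eq:Fish-AD-cond} was already verified in the proof of Theorem~\ref{t:main-OP}, and take $r_i(T)=T(x_i)$. Then $\alpha_{\lqr}(\cL) = \Om_\qb(X-A-C,t)\cdot\prod_{x_i\in C}q_i^t$ and similarly for the other three, and the extra monomial prefactors cancel because $\mathbf{1}_C+\mathbf{1}_D=\mathbf{1}_{C\cup D}+\mathbf{1}_{C\cap D}$. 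This is precisely the ``being careful about which coordinates are summed over'' point you flag; in the paper's formulation it becomes a one-line observation rather than a delicate extension/restriction argument, and it explains where the prefactors come from. So: right method, right tools, but replace the erroneous set identity with the indicator-function-on-$\PP(\cP,t)$ formulation, and you recover the paper's proof verbatim.
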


\smallskip

\begin{proof}[Proof of \. \eqref{eq:main-OP3} \. $\Longrightarrow$ \. \eqref{eq:main-OP2}]
Take \ts $\qb \gets (q,\ldots,q)$.
\end{proof}

\smallskip

Finally, we present another generalization of Theorem~\ref{t:main-OP}
for different choices of rank functions, and furthermore generalizes
Lam--Pylyavskyy Theorem~\ref{t:LP-poset}.  We prove both theorems
in Section~\ref{s:proof-main}.

\smallskip

\begin{thm}\label{t:main-K}
	Let \ts $\cP=(X,\prec)$ \ts be a finite poset. Let \ts $A,B \subseteq X$ \ts be lower ideals,
	and let \ts $C,D \subseteq X$ \ts be upper ideals of~$\cP$, such that \.
	$A \cap C = B \cap D = \varnothing$.
	Then:
	\begin{equation}\label{eq:main-OP-K}
		\rK_\bbz(X-V,N) \.\cdot\. \rK_\bbz(X-W,N) \ \geqslant_\bbz \ \rK_\bbz(X-A-C,N) \.\cdot\. \rK_\bbz(X-B-D,N)\ts,
	\end{equation}
	where \. $V:=(A \cap B) \cup (C \cup D)$ \. and \. $W:=(A \cup B) \cup (C \cap D)$.
\end{thm}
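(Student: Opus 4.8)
The plan is to deduce Theorem~\ref{t:main-K} from the multivariate AD~inequality (Theorem~\ref{thm:multi-q-AD}, stated in Section~\ref{s:AD-multi}), mirroring the way Theorem~\ref{t:main-OP-multi} is obtained. The governing lattice will be the distributive lattice of subsets of~$X$ lying ``between'' certain lower and upper ideals. Concretely, fix the lower ideals $A,B$ and the upper ideals $C,D$ with $A\cap C=B\cap D=\varnothing$, and work inside the Boolean lattice $2^X$ restricted to the interval determined by $A\cap B$, $A\cup B$ on the ``lower'' side and $C\cap D$, $C\cup D$ on the ``upper'' side. For a set $S$ in this interval we want to record the pair ``which elements of $A\cup B$ are included'' and ``which elements of $C\cup D$ are excluded,'' so that the four sets $X-A-C$, $X-B-D$, $X-V$, $X-W$ arise as the four corners: if we set $\alpha:=A$, $\beta:=B$ (as lower-ideal ``flags'') and $\gamma:=C$, $\delta:=D$ (as upper-ideal ``flags''), then $X-V$ corresponds to the ``meet'' corner $(A\cap B,\,C\cup D)$ and $X-W$ to the ``join'' corner $(A\cup B,\,C\cap D)$. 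The key combinatorial point, which one checks directly from the ideal hypotheses, is that removing a lower ideal and a disjoint upper ideal from $\cP$ always leaves an \emph{interval} subposet, and that the lattice operations on these flags intertwine correctly with $\cup$ and $\cap$ of ideals.

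Having set up the index lattice $\cL$, I would define the four functions on $\cL$ whose AD~inequality yields \eqref{eq:main-OP-K}. For a lattice element $\ell$ encoding a subposet $\cP_\ell := \cP - (\text{lower ideal}) - (\text{upper ideal})$ on ground set $X_\ell\subseteq X$, put
\[
\Phi(\ell) \ := \ \rK_\bbz(X_\ell, N) \ = \ \sum_{A'\in \PP(\cP_\ell,N)} \ \prod_{i=0}^{N} z_i^{\,m_i(A')}\ts,
\]
a polynomial in $\rr_+[\bbz]$. The four functions are then $f_1 := \Phi$ evaluated at the $(A,C)$-corner, $f_2 := \Phi$ at the $(B,D)$-corner, $g_1 := \Phi$ at the meet corner $X-V$, and $g_2 := \Phi$ at the join corner $X-W$ — extended to all of $\cL$ appropriately (typically by $\Phi$ itself, with the four distinguished elements playing the roles of $x$, $y$, $x\wedge y$, $x\vee y$ in the AD~setup). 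The hypothesis to verify for the multivariate AD~inequality is the pointwise product bound $f_1(x)\,f_2(y) \ \les_{\bbz}\ g_1(x\wedge y)\,g_2(x\vee y)$ for all $x,y\in\cL$; since the multivariate AD~inequality already handles ``$\les_\bbz$''-valued functions, this reduces the whole theorem to a single such inequality between products of $\rK$-polynomials on a pair of subposets and their ``meet/join'' subposets.

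The heart of the argument — and the step I expect to be the main obstacle — is establishing exactly that product inequality at the level of $P$-partitions, i.e.\ constructing a coefficient-wise domination
\[
\rK_\bbz(X-A-C,N)\ \rK_\bbz(X-B-D,N)\ \les_\bbz\ \rK_\bbz(X-V,N)\ \rK_\bbz(X-W,N)
\]
and, more generally, the version with arbitrary $x,y$ in place of the corners. For this I would produce an injection on pairs of $\cP$-partitions: given $A'\in\PP(\cP_{X-A-C},N)$ and $B'\in\PP(\cP_{X-B-D},N)$, I want to output a pair $(A'',B'')$ with $A''\in\PP(\cP_{X-V},N)$, $B''\in\PP(\cP_{X-W},N)$, $m_i(A')+m_i(B')=m_i(A'')+m_i(B'')$ for every value $i$, and injectivity. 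The natural recipe is a ``sorting/uncrossing'' map on the overlap region $X - A - B - C - D$: on elements only in one of the two ground sets nothing moves, while on the common part one takes, coordinatewise along each fiber of the poset structure, the pointwise min into the $X-V$ copy and the pointwise max into the $X-W$ copy — exactly as in the Ahlswede--Daykin/FKG lattice-of-subsets proofs and as in Fishburn's original argument. One must check that min and max of two order-preserving maps remain order-preserving on the respective (interval) subposets — this uses that $X-V$ and $X-W$ are honest interval subposets and that the flag lattice operations were chosen compatibly — and that the multiset of values is preserved, giving the monomial-weight identity; injectivity follows because the original pair can be reconstructed from $(A'',B'')$ together with the bookkeeping of where min/max ``switched.'' Once this injection is in hand, summing the monomial identity over all source pairs gives the coefficient-wise inequality, which feeds into the multivariate AD~inequality and completes the proof; the specializations to Theorems~\ref{t:main-OP-multi}, \ref{t:main-OP} and to Theorem~\ref{t:LP-poset} then follow by the substitutions already recorded in the text.
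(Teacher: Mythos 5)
Your proposal does not follow the paper's argument, and as written it contains two serious problems.

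\textbf{Wrong lattice and function codomain, leading to circularity.} You index your lattice by subposets of $\cP$ (via ``flags'' of ideals) and propose to feed the \emph{polynomial-valued} function $\Phi(\ell)=\rK_\bbz(X_\ell,N)$ into the multivariate AD~inequality. But Theorem~\ref{thm:multi-q-AD} (like the classical Theorem~\ref{thm:AD}) requires $\alpha,\beta,\gamma,\delta\colon \cL\to\rr_+$ to be \emph{real}-valued; the $\bbz$-weights enter only through the modular functions~$r_i$, not through polynomial-valued inputs. Even setting that aside, the pointwise hypothesis you would need to verify at the four distinguished corners is
\[
\rK_\bbz(X-A-C,N)\.\rK_\bbz(X-B-D,N)\ \les_\bbz\ \rK_\bbz(X-V,N)\.\rK_\bbz(X-W,N),
\]
which is precisely the statement of Theorem~\ref{t:main-K}. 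You have reduced the theorem to itself (or, if you insist on verifying the hypothesis at all pairs $x,y\in\cL$, to a strictly stronger family of instances). The AD framework is doing no work in this setup.

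\textbf{The fallback injection is the hard part, not a routine lemma.} You then pivot to a cell-transfer style injection on pairs of $P$-partitions that swaps to pointwise min/max on the common part and ``moves nothing'' elsewhere. This is essentially the Lam--Pylyavskyy strategy, but as the paper emphasizes (Introduction and Remark~\ref{r:LP-RPP}), their injection only works with \emph{noncommutative, ad hoc} meet/join operations on ideals precisely because the naive min/max patching you describe does not preserve order-preservation across the boundary between the common part and the parts private to $X-A-C$ or $X-B-D$. With the standard $\cup/\cap$ used in Theorem~\ref{t:main-K}, finding such a direct combinatorial injection is explicitly raised as an open problem in $\S$\ref{ss:finrem-SP}. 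Your sketch takes for granted exactly the step that is not known to work.

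\textbf{What the paper actually does.} The correct lattice is $\cL=\PP(\cP,N)$, the lattice of order-preserving maps $T\colon X\to\{0,\dots,N\}$ with pointwise max/min, as in \eqref{eq:lat-Pt}. The four AD functions are the $\{0,1\}$-indicators from \eqref{eq:Fish-ftn}: $\alpha(T)$ tests that $T\equiv 0$ on $A$ and $T\equiv N$ on $C$, and similarly for $\beta,\gamma,\delta$ with $(B,D)$, $(A\cap B, C\cup D)$, $(A\cup B, C\cap D)$ respectively. The pointwise AD hypothesis $\alpha(S)\beta(T)\le\gamma(S\vee T)\delta(S\wedge T)$ is then immediate from $\max\{0,0\}=0$, $\min\{N,N\}=N$, and the order-preservation of $S,T$. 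Finally, the modular functions are $r_i(T):=|\{x\in X:T(x)=i\}|$ for $i\in\{0,\dots,N\}$, which record the value multiset; with these, $\alpha_{\lqr}(\cL)=\rK_\bbz(X-A-C,N)\cdot q_0^{|A|}q_N^{|C|}$ and likewise for the other three, and Theorem~\ref{thm:multi-q-AD} gives \eqref{eq:main-OP-K} after cancelling the common monomial $q_0^{|A|+|B|}q_N^{|C|+|D|}$. The key insight you are missing is that the lattice should live at the level of individual $P$-partitions, not subposets, so that the AD hypothesis becomes a trivial pointwise check rather than a restatement of the theorem.
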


\smallskip

\begin{proof}[Proof of \. \eqref{eq:main-OP-K} \. $\Longrightarrow$ \. \eqref{eq:main-OP2}]
Take \ts $\bbz \gets (1,q,q^2,\ldots,q^N)$.
\end{proof}

\smallskip

In particular, these two theorems imply the following corollary for skew Young diagrams.

\smallskip

\begin{cor}\label{c:RPP-two}
	Let \ts $\mu/\al$ \ts and \ts $\nu/\be$ \ts be skew Young diagrams.  Then:
\begin{align}\label{eq:main-RPP-Om}
		\Om_{\qb}(\mu/\al \. \vee \. \nu/\beta,t) \. \cdot \.  \Om_{\qb}(\mu/\al \. \wedge \. \nu/\be,t)  \
	&\geqslant_{\qb} \ \Om_{\qb}(\mu/\al,t) \. \cdot \. \Om_{\qb}(\nu/\be,t)\,,
\end{align}
and
\begin{align}	\label{eq:main-RPP-F}
	\RGF_{\mu/\al \. \vee \. \nu/\beta}(\bbz) \. \cdot \.  \RGF_{\mu/\al \. \wedge \. \nu/\be}(\bbz)  \
	&\geqslant_{\bbz} \ \RGF_{\mu/\al}(\bbz) \. \cdot \. \RGF_{\nu/\be}(\bbz)\,,
\end{align}
where \. $\bbz = (z_0,z_1,\ldots,z_N)$.
\end{cor}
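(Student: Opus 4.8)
The plan is to obtain both displays \eqref{eq:main-RPP-Om} and \eqref{eq:main-RPP-F} as instances of Theorems~\ref{t:main-OP-multi} and~\ref{t:main-K} respectively, using exactly the dictionary between skew shapes and poset ideals that already appeared in the passage from Theorem~\ref{t:main-LE} to Corollary~\ref{c:Bjo-gen}. Concretely, I would take \ts $\cP:=\cP_\la$ \ts with \ts $\la:=\mu\vee\nu$, so that \ts $X=\la$, and set
\[
A:=\al, \qquad B:=\be, \qquad C:=\la/\mu, \qquad D:=\la/\nu\ts,
\]
where $A,B$ are lower ideals and $C,D$ are upper ideals of $\cP_\la$. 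Since \ts $\al\ssu\mu$ \ts and \ts $\be\ssu\nu$, the cells of $\al$ are disjoint from $\la/\mu$ and the cells of $\be$ are disjoint from $\la/\nu$, so $A\cap C=B\cap D=\emp$ and the hypotheses of both theorems hold.

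Next I would record the elementary set identities, which are the same ones used in the proof of Corollary~\ref{c:Bjo-gen}: from $C\cup D=\la/(\mu\wedge\nu)$, \ts $C\cap D=\la/(\mu\vee\nu)=\emp$, \ts $A\cap B=\al\wedge\be$, \ts $A\cup B=\al\vee\be$ \ts one obtains
\[
X-A-C=\mu/\al, \quad X-B-D=\nu/\be, \quad X-V=(\mu\wedge\nu)/(\al\wedge\be), \quad X-W=(\mu\vee\nu)/(\al\vee\be)\ts,
\]
where $\al\wedge\be\ssu\mu\wedge\nu$ and $\al\vee\be\ssu\mu\vee\nu=\la$ guarantee all four skew shapes are well defined. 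Since $\cP_{\la/\tau}$ is by construction the subposet of $\cP_\la$ induced on the cell set $\la/\tau$, each of the four terms in \eqref{eq:main-OP3} and in \eqref{eq:main-OP-K} coincides with the corresponding object attached to one of these four skew shapes.

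For \eqref{eq:main-RPP-F} this finishes the argument: by the identity $\RGF_{\la/\tau}(z_0,\ldots,z_N)=\rK_\bbz(\cP_{\la/\tau},N)$ recorded at the end of Section~\ref{ss:notation-Schur}, and since all four terms of \eqref{eq:main-OP-K} are polynomials in the \emph{same} variables $z_0,\ldots,z_N$ (indexed by attained values, not by cells), Theorem~\ref{t:main-K} applied to the data above is literally \eqref{eq:main-RPP-F}. For \eqref{eq:main-RPP-Om} I would first fix a natural labeling of $\cP_\la$ and attach a variable $q_c$ to each cell $c$ of $\la$; this is the convention tacitly intended when writing $\Om_\qb$ for a skew shape sitting inside a fixed larger diagram. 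Under it, $\Om_\qb$ of each of the four skew subposets is the multivariate order polynomial in the cell-variables supported on that subposet, and the relation $\geqslant_\qb$ of \eqref{eq:main-OP3}, which lives in $\rr[q_c:c\in\la]$, is precisely \eqref{eq:main-RPP-Om}.

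The substance here is purely the translation, so there is no genuine analytic obstacle beyond the two theorems already granted; the closest thing to a ``hard part'' is the bookkeeping of conventions. Specifically, one must check the four set identities above (which mirror the excerpt) and make explicit that in the skew-shape form of $\Om_\qb$ the variables are pinned to cell positions rather than reindexed per subposet — otherwise the coefficientwise relations $\geqslant_\qb$ and $\geqslant_\bbz$ would not transfer verbatim from \eqref{eq:main-OP3} and \eqref{eq:main-OP-K} to the four skew shapes $\mu/\al$, $\nu/\be$, $\mu/\al\wedge\nu/\be$, $\mu/\al\vee\nu/\be$.
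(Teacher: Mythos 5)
Your proposal is correct and follows exactly the same route as the paper: take $\cP := \cP_\la$ with $\la := \mu \vee \nu$ and set $A := \al$, $B := \be$, $C := \la/\mu$, $D := \la/\nu$ (the substitution \eqref{eq:subs-Young}), then specialize Theorem~\ref{t:main-OP-multi} to get \eqref{eq:main-RPP-Om} and Theorem~\ref{t:main-K} to get \eqref{eq:main-RPP-F}, using the same set identities already used in the implication from Theorem~\ref{t:main-LE} to Corollary~\ref{c:Bjo-gen}. Your explicit remark that the $\qb$-variables must be pinned to cell positions of $\la$ (rather than reindexed per subshape) for the coefficientwise relation $\geqslant_\qb$ to transfer verbatim is a useful observation that the paper leaves tacit, but it does not change the substance of the argument.
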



\begin{proof}
	Let \. $\cP,A,B,C,D$ \. be as in \eqref{eq:subs-Young}. By applying the same argument as in
the proof of the \. $[$Theorem~\ref{t:main-LE} \, $\Longrightarrow$ \, Corollary~\ref{c:Bjo-gen}$]$ \.
implication, the inequality \eqref{eq:main-RPP-Om} now follows from \eqref{eq:main-OP3},
 while the inequality \eqref{eq:main-RPP-F} follows from \eqref{eq:main-OP-K}.
\end{proof}

\smallskip

\begin{rem}\label{r:RPP-LP}
Although the inequalities \eqref{eq:main-RPP-F} and \eqref{eq:main-RPP-Om}
do not appear in \cite{LP07}, they follow from the approach in that paper.
\end{rem}

\medskip

%
%

\section{The Ahlswede–-Daykin inequality}\label{s:AD}

In this section, we prove the first part of Theorem~\ref{t:main-OP} by using the
Ahlswede--Daykin (AD) inequality.  Our approach is based on the proof
in \cite{Fish1}.
For every \. $\rho: Z\to \Rb_{+}$ \. and every \. $X\subseteq Z$, denote
\begin{equation}\label{eq:def-rho-sum}
 \rho(X) \ := \ \sum_{x \in X} \. \rho(x)\ts.
\end{equation}

\smallskip

\begin{thm}[{\rm \defn{Ahlswede--Daykin inequality}~\cite{AD}}{}]\label{thm:AD}
	Let \ts $\LL=(\cL,\vee,\wedge)$ \ts be a  finite distributive lattice, and let \. $\alpha,\beta,\gamma,\delta: \cL \to \Rb_{+}$ \. be nonnegative functions on $\cL$.
	Suppose we have:
\begin{equation}\label{eq:AD-def}
 \alpha(x) \.\cdot \. \beta(y) \ \leq \ \gamma(x \vee y) \.\cdot \. \delta(x \wedge y) \quad \text{ for every } \ \ x, \ts y \in \cL\ts.
 \end{equation}
	Then:
\begin{equation}\label{eq:AD}
 \alpha(X) \.\cdot \. \beta(Y) \ \leq \ \gamma(X \vee Y) \.\cdot \. \delta(X \wedge Y) \quad \text{ for every } \ \ X, \ts Y \subseteq \cL\ts.
\end{equation}
\end{thm}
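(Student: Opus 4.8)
The plan is to prove Theorem~\ref{thm:AD} by the classical inductive argument for the four functions theorem (cf.\ \cite[\S6.1]{AS}): first reduce to a Boolean lattice, then reduce from arbitrary subsets $X,Y$ to the full lattice, and finally induct on the dimension of the cube, the only non-routine ingredient being an elementary inequality among eight nonnegative reals.

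\textbf{Reductions.} By Birkhoff's representation theorem (see e.g.\ \cite[Ch.~3]{EC}), the finite distributive lattice $\cL$ embeds as a sublattice of the Boolean lattice $2^{[n]}$ of all subsets of $[n]$ under inclusion, with $\vee$ and $\wedge$ realized by $\cup$ and $\cap$. Extend $\alpha,\beta,\gamma,\delta$ to $2^{[n]}$ by setting them equal to $0$ on $2^{[n]}\setminus\cL$; this preserves \eqref{eq:AD-def} (it is unchanged when $x,y\in\cL$, and otherwise its left-hand side is $0$ while its right-hand side is $\ge 0$), and it changes neither $X$ nor $X\vee Y$ nor $X\wedge Y$ for $X,Y\subseteq\cL$. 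So we may assume $\cL=2^{[n]}$. Next, it suffices to treat the case $X=Y=\cL$: given arbitrary $X,Y$, let $\widehat\alpha$ agree with $\alpha$ on $X$ and vanish off $X$, let $\widehat\beta$ agree with $\beta$ on $Y$ and vanish off $Y$, let $\widehat\gamma$ agree with $\gamma$ on $X\vee Y$ and vanish off it, and let $\widehat\delta$ agree with $\delta$ on $X\wedge Y$ and vanish off it. Since $x\in X$ and $y\in Y$ force $x\vee y\in X\vee Y$ and $x\wedge y\in X\wedge Y$, the quadruple $\widehat\alpha,\widehat\beta,\widehat\gamma,\widehat\delta$ again satisfies \eqref{eq:AD-def}, and the full-lattice case applied to it reads exactly $\alpha(X)\,\beta(Y)\le\gamma(X\vee Y)\,\delta(X\wedge Y)$.

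\textbf{Induction on $n$.} For $n=0$ the lattice is a single point and the claim is \eqref{eq:AD-def} itself. For the inductive step, identify $2^{[n]}$ with $2^{[n-1]}\times\{0,1\}$, the second coordinate recording whether $n$ belongs to the set, and for $i,j\in\{0,1\}$ define functions on $2^{[n-1]}$ by $\alpha_i(z):=\alpha(z,i)$, $\beta_j(z):=\beta(z,j)$, $\gamma_i(z):=\gamma(z,i)$, $\delta_j(z):=\delta(z,j)$. Since joins and meets in $2^{[n]}$ act coordinatewise, \eqref{eq:AD-def} for $\alpha,\beta,\gamma,\delta$ amounts to $\alpha_i(z_1)\,\beta_j(z_2)\le\gamma_{i\vee j}(z_1\vee z_2)\,\delta_{i\wedge j}(z_1\wedge z_2)$ for all $z_1,z_2\in 2^{[n-1]}$ and all $i,j$. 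Applying the inductive hypothesis on $2^{[n-1]}$ to the four quadruples $(\alpha_0,\beta_0,\gamma_0,\delta_0)$, $(\alpha_0,\beta_1,\gamma_1,\delta_0)$, $(\alpha_1,\beta_0,\gamma_1,\delta_0)$, $(\alpha_1,\beta_1,\gamma_1,\delta_1)$ gives $A_iB_j\le C_{i\vee j}\,D_{i\wedge j}$, where $A_i:=\alpha_i(2^{[n-1]})$ and $B_j,C_k,D_\ell$ are defined in the same way by summing over the sublattice. Since $\alpha(2^{[n]})=A_0+A_1$ and likewise for $\beta,\gamma,\delta$, we are reduced to the elementary lemma: for nonnegative reals with $A_0B_0\le C_0D_0$, $A_0B_1\le C_1D_0$, $A_1B_0\le C_1D_0$, $A_1B_1\le C_1D_1$, one has $(A_0+A_1)(B_0+B_1)\le(C_0+C_1)(D_0+D_1)$. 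Expanding, the terms $A_0B_0\le C_0D_0$ and $A_1B_1\le C_1D_1$ are given; for the cross terms put $s:=C_1D_0$, $u:=A_0B_1$, $v:=A_1B_0$, note $u,v\le s$ and $uv=(A_0B_0)(A_1B_1)\le(C_0D_0)(C_1D_1)=(C_0D_1)\,s$, and conclude $u+v\le s+\tfrac{uv}{s}\le s+C_0D_1=C_1D_0+C_0D_1$ from $s+\tfrac{uv}{s}-u-v=\tfrac{(s-u)(s-v)}{s}\ge 0$ (when $s=0$ the bounds force $u=v=0$, so the inequality is trivial).

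\textbf{Main obstacle.} Almost everything above is bookkeeping; the single point requiring an idea is the elementary lemma ending the induction, where the factorization $s+\tfrac{uv}{s}-u-v=\tfrac{(s-u)(s-v)}{s}$ is exactly what converts the four ``rectangular'' bounds into the product bound. The remaining care is in the degenerate cases --- here $s=0$, and in the Birkhoff step the point that one really needs the sublattice operations to coincide with $\cup$ and $\cap$, not merely to respect the order.
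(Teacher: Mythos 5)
The paper states the Ahlswede--Daykin inequality (Theorem~\ref{thm:AD}) without proof, citing \cite{AD} for the original and \cite[$\S$6.1]{AS} for the ``elementary albeit somewhat involved proof by induction.'' Your proof is a correct and complete rendition of exactly that standard argument: the Birkhoff embedding into $2^{[n]}$, the truncation trick reducing arbitrary $X,Y$ to the full lattice, the coordinatewise decomposition $2^{[n]}\simeq 2^{[n-1]}\times\{0,1\}$, and the eight-reals lemma with the factorization $(s-u)(s-v)/s\ge 0$ handling the cross terms (including the $s=0$ degenerate case). All the bookkeeping checks out --- in particular, your choice of the four quadruples $(\alpha_0,\beta_0,\gamma_0,\delta_0)$, $(\alpha_0,\beta_1,\gamma_1,\delta_0)$, $(\alpha_1,\beta_0,\gamma_1,\delta_0)$, $(\alpha_1,\beta_1,\gamma_1,\delta_1)$ is the correct decomposition of the hypothesis $\alpha_i(z_1)\beta_j(z_2)\le\gamma_{i\vee j}(z_1\vee z_2)\delta_{i\wedge j}(z_1\wedge z_2)$, and the cancellations $uv=(A_0B_0)(A_1B_1)$ and $(C_0D_0)(C_1D_1)=(C_0D_1)s$ are exactly what makes the cross-term estimate go through. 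Since the paper's ``own proof'' is the reference to \cite[$\S$6.1]{AS}, your argument is the same approach, just written out.
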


\smallskip

\subsection*{Proof of the first part of Theorem~\ref{t:main-OP}}
Let \ts $\cP=(X,\prec)$ \ts be a poset, and let $t\ge 0$.
We denote by \. $\LL(\cP,t)=(\cL,\vee,\wedge)$ \. the distributive lattice on the set \. $\cL \subseteq \{0,\ldots,t\}^{X}$ \. given by
\begin{equation}\label{eq:lat-Pt}
 \cL \ := \ \PP(\cP,t) \ = \ \big\{  T:X \to \{0,\ldots,t\} \ : \  T(x) \leq T(y) \ \ \text{ for all } \ \. x,y \in X \. \ \text{s.t.} \ \. x \prec y \big\},
\end{equation}
with the join and meet operation given by
\begin{align*}
	[S \vee T](x) \ = \ \max\{S(x), T(x)\} \quad \text{and}
\quad [S \wedge  T](x) \ = \    \min\{S(x), T(x)\} \quad \text{for every} \ x \in X.
\end{align*}
Recall that \. $\Om(\cP,t) = |\cL|$.
%
%
%
%
%
	Let \. $\alpha,\beta,\gamma, \delta: \cL \to \Rb_{+}$ \. be given by
	\begin{equation}\label{eq:Fish-ftn}
	\begin{split}
		\alpha(T) \ &:= \ \mathbf{1}\{ \. T(x) =0 \ \text{ for all } \  x \in A\., \ T(y)= t \ \text{ for all } \ y \in C \},\\
		\beta(T) \ &:= \ \mathbf{1}\{ \. T(x) =0 \ \text{ for all } \  x \in B\., \ T(y)= t \ \text{ for all } \ y \in D \},\\
		\gamma(T) \ &:= \ \mathbf{1}\{ \. T(x) =0 \ \text{ for all } \  x \in A \cap B\., \ T(y)= t \ \text{ for all } \ y \in C \cup D \},\\
		\delta(T) \ &:= \ \mathbf{1}\{ \. T(x) =0 \ \text{ for all } \  x \in A \cup B\., \ T(y)= t \ \text{ for all } \ y \in C \cap D \}.
	\end{split}
	\end{equation}
Note that
\begin{alignat*}{2}
	\alpha(\cL) \ &= \  \Omega(X-A-C,t), \qquad && \beta(\cL)  \ = \ \Omega(X-B-D,t),\\
	\gamma(\cL) \ &= \  \Omega(X-V,t), \qquad   &&
	\delta(\cL) \ = \  \Omega(X-W,t).
\end{alignat*}
By the AD inequality~\eqref{eq:AD}, it thus suffices to verify \eqref{eq:AD-def}, which in this case
states:
\begin{equation}\label{eq:Fish-AD-cond}
 \alpha(S) \.\cdot \. \beta(T) \ \leq \ \gamma(S \vee T) \.\cdot \. \delta(S \wedge T) \quad \text{ for every } \ S,T \in \cL.
\end{equation}
Let \ts $S,T \in \cL$ \ts be such that 	 \. $\alpha(S)=\beta(T)=1$.
Then:
\begin{align*}
 S(x) =0 \ \text{ for } x \in A,   \quad S(y) =t \ \text{ for } y \in C, \quad T(x) =0 \ \text{ for } x \in B,   \quad T(y) =t \ \text{ for } y \in D.
\end{align*}
This gives:
\begin{align*}
	& \max \{S(x), T(x)\} \ = \ 0 \ \text{ for } \ x \in A \cap B, \quad
	& \max \{S(y), T(y)\} \  = \ t \ \text{ for } \ x \in C \cup D, \\
		& \min \{S(x), T(x)\} \ = \ 0 \ \ts \text{ for } \ x \in A \cup B, \quad
	& \min \{S(y), T(y)\} \ = \ t \ \text{ for } \ x \in C \cap D.
\end{align*}
The first equation implies \. $\gamma(S \vee T)=1$\., while the second equation
implies \. $\delta(S\wedge T)=1$. This implies \eqref{eq:Fish-AD-cond} and
completes the proof of \eqref{eq:main-OP1}.
\qed

\smallskip

\begin{rem} \label{r:AD-q}
For the second (more general) part of Theorem~\ref{t:main-OP}, one can use the
same approach with the  AD~inequality in Theorem~\ref{thm:AD} replaced with
$q$-AD~inequality by Christofides~\cite{Chri}.  Our proof of
Theorem~\ref{t:main-OP-multi} given below, extends Theorem~\ref{t:main-OP}
using the multivariate $\qb$-AD~inequality.
\end{rem}

\bigskip


\medskip

\section{Multivariate AD inequality}\label{s:AD-multi}

\subsection{The statement} \label{ss:AD-multi-thm}
Let \. $\LL:=(\cL,\wedge,\vee)$ \. be a finite distributive lattice.
Throughout this section, fix variables \ts $q_1,\ldots,q_\ell$\ts, and \defn{modular functions}
\.  $r_1,\ldots, r_\ell:\cL \to \nn$ \. defined to satisfy
\[ r_i(x)+r_i(y) \ = \  r_i(x \vee y) + r_i(x \wedge  y) \quad \text{ for all } \ \ x,y \in \cL \ \ \text{and} \ \, \ 1\le i \le \ell. \]
Write \. $\qb:=(q_1,\ldots,q_\ell)$ \. and \. $\rb:=(r_1,\ldots, r_\ell)$.
For $x \in \cL$, write
\[ \rb(x) \ := \ \big(r_1(x), \ldots, r_\ell(x)\big)  \quad \text{and} \quad \qb^{\rb(x)} \ := \ q_1^{r_1(x)} \cdots \. q_\ell^{r_\ell(x)}.  \]
For a function \. $\rho: \cL \to \Rb_{+}$ \. and subset \. $X \subseteq \cL$, define
\begin{equation}\label{eq:def-rho-sum-q}
\rho_{\langle\qb,\rb \rangle}(X) \ := \  \sum_{x\in X} \. \rho(x) \. \qb^{\rb(x)} \ \in  \Rb_+[q_1,\ldots,q_\ell].
\end{equation}
Note that \eqref{eq:def-rho-sum-q} is a multivariate $\qb$-analogue of \eqref{eq:def-rho-sum}.
We can now state the multivariate $\qb$-analogue of the Ahlswede--Daykin inequality (Theorem~\ref{thm:AD}).

%
%
%
%
%

\smallskip

\begin{thm}[{\rm \defn{multivariate AD inequality}}{}]\label{thm:multi-q-AD}
Let \. $\LL=(\cL,\wedge,\vee)$ \ts be a  finite distributive lattice, and let \. $\alpha,\beta,\gamma,\delta: \cL \to \Rb_{+}$ \. be nonnegative functions on $\cL$.
Suppose we have
\begin{equation}\label{eq:AD-condition}
 \alpha(x) \.\cdot \. \beta(y) \ \leq \ \gamma(x \vee y) \. \cdot \.\delta(x \wedge y) \quad \text{ for every } \ \ x,y \in \cL.
\end{equation}
Then:
\begin{equation}\label{eq:AD-multi}
 \alpha_{\lqr}(X) \. \cdot \.\beta_{\lqr}(Y) \ \leqs_{\qb} \ \gamma_{\lqr}(X \vee Y) \.\cdot \. \delta_{\lqr}(X \wedge Y) \quad \text{ for every } \ \ X,Y \subseteq \cL.
\end{equation}
\end{thm}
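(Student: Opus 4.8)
The plan is to reduce the multivariate statement to the classical AD inequality (Theorem~\ref{thm:AD}) by a ``level decomposition'' of the lattice according to the values of the modular functions $\rb$. The key observation is that, since each $r_i$ is modular, the map $\rb:\cL\to\nn^\ell$ takes each pair $x,y$ to a pair whose coordinatewise sum equals $\rb(x\vee y)+\rb(x\wedge y)$; in particular, comparing coefficients of a fixed monomial $\qb^{\bv}$ on the left-hand side of \eqref{eq:AD-multi} involves only pairs $(x,y)$ with $\rb(x)+\rb(y)=\bv$, and for each such pair $x\vee y$ and $x\wedge y$ contribute exactly to the coefficient of $\qb^{\bv}$ on the right-hand side as well. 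So the inequality \eqref{eq:AD-multi} is equivalent, monomial by monomial, to a family of scalar inequalities, each of which I want to obtain from Theorem~\ref{thm:AD}.

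Concretely, fix $\bv\in\nn^\ell$ and let $\cL_{\bw}:=\{x\in\cL : \rb(x)=\bw\}$ for $\bw\in\nn^\ell$. First I would extract the coefficient of $\qb^{\bv}$ from both sides: on the left it is $\sum_{\bw+\bw'=\bv}\,\alpha(X\cap\cL_{\bw})\cdot\beta(Y\cap\cL_{\bw'})$ (abbreviating $\rho(S)=\sum_{x\in S}\rho(x)$), and on the right it is $\gamma\big((X\vee Y)\cap\cL_{\bv}\big)\cdot 1 + \cdots$ — more care is needed here because $(X\vee Y)\cap\cL_{\bv}$ and $(X\wedge Y)\cap\cL_{\bw}$ do not split as cleanly. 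The cleaner route, which I would actually pursue, is to introduce a single auxiliary variable: set $q_i=q^{c_i}$ for generic weights $c_i$ so that the $\ell$-variable inequality with ``$\leqs_\qb$'' follows from the one-variable inequality with ``$\leqs_q$'' applied to the modular function $r:=\sum_i c_i r_i$; but since we need the polynomial (not numerical) inequality for all integer $c_i\ge 0$ simultaneously, and a polynomial in $\qb$ with the property that every specialization $q_i\mapsto q^{c_i}$ has nonnegative coefficients is itself coefficientwise nonnegative, this reduces Theorem~\ref{thm:multi-q-AD} to the single-variable case $\ell=1$.

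For the single-variable case, I would run the argument of Fishburn/Christofides directly. Decompose $\cL=\bigsqcup_{k\ge 0}\cL_k$ with $\cL_k=\{x:r(x)=k\}$. The coefficient of $q^m$ in $\gamma_{\<q,r\>}(X\vee Y)\cdot\delta_{\<q,r\>}(X\wedge Y)$ is $\sum_{a+b=m}\gamma((X\vee Y)\cap\cL_a)\,\delta((X\wedge Y)\cap\cL_b)$. Now for a pair $x\in X\cap\cL_i$, $y\in Y\cap\cL_j$ with $i+j=m$, modularity gives $r(x\vee y)+r(x\wedge y)=m$, so $(x\vee y,x\wedge y)$ contributes to exactly one term $a+b=m$ above; hence I want to apply Theorem~\ref{thm:AD} not to $\cL$ directly but in a way that respects these levels. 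The clean device: for each fixed $m$, define on $\cL$ the modified functions $\alpha_m(x):=\alpha(x)$ if $2r(x)\le m$... — actually the standard trick is to apply Theorem~\ref{thm:AD} to the functions $\alpha,\beta,\gamma,\delta$ but with $X\cap\cL_i$ and $Y\cap\cL_j$ summed appropriately; I would verify that the hypothesis \eqref{eq:AD-def} is inherited and that Theorem~\ref{thm:AD} applied to the sublattice generated by the relevant levels yields $\sum_{i+j=m}\alpha(X\cap\cL_i)\beta(Y\cap\cL_j)\le\sum_{a+b=m}\gamma((X\vee Y)\cap\cL_a)\delta((X\wedge Y)\cap\cL_b)$, which is exactly the coefficient comparison we need.

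The main obstacle I anticipate is precisely the bookkeeping in this last step: the sets $(X\vee Y)\cap\cL_a$ are \emph{not} of the form $X'\vee Y'$ for subsets $X',Y'$ of a single level, so one cannot blindly quote Theorem~\ref{thm:AD} level by level. The resolution is to apply the classical AD inequality once to the whole lattice $\cL$ with the four given functions — obtaining $\alpha(X)\beta(Y)\le\gamma(X\vee Y)\delta(X\wedge Y)$ — and then to \emph{refine} this by intersecting with level sets, using that $r$ is modular to guarantee that the ``degree'' is additive under $\vee,\wedge$; equivalently, replace $\cL$ by $\cL':=\cL\times\{0,1,\dots,M\}$ with a suitable product order encoding the level, so that the level-graded inequality becomes an ordinary AD inequality on $\cL'$. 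Checking that $\cL'$ is distributive and that the lifted $\alpha,\beta,\gamma,\delta$ satisfy \eqref{eq:AD-condition} is routine, and then \eqref{eq:AD-multi} drops out by reading off coefficients. I expect the write-up to be short once this lifting is set up correctly.
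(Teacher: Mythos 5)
Your reduction of the multivariate statement to the single‑variable case is correct and clean: for a fixed multivariate polynomial $P(\qb)$ with finite support $S$, choosing exponents $c_1,\dots,c_\ell\in\nn$ so that $\bba\mapsto\sum_i c_i a_i$ is injective on $S$ makes the specialization $q_i\mapsto q^{c_i}$ preserve coefficients, and $r:=\sum_i c_i r_i$ is again an $\nn$-valued modular function; hence coefficientwise nonnegativity for every such single‑variable specialization forces coefficientwise nonnegativity of $P$. This reduction does not appear in the paper (which treats all $\ell$ variables at once), and it is a nice observation.

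The gap is in the single‑variable case, which is the actual content of the theorem. You correctly diagnose the obstacle — that $(X\vee Y)\cap\cL_a$ is not of the form $X'\vee Y'$ for subsets of a single level, so the classical AD~inequality cannot be invoked level by level — but the proposed resolution does not work. ``Refining'' a single scalar inequality $\alpha(X)\beta(Y)\le\gamma(X\vee Y)\delta(X\wedge Y)$ by intersecting with level sets is not a legitimate operation: one inequality cannot be split into a family of graded inequalities after the fact. And the lift to $\cL':=\cL\times\{0,\dots,M\}$ does not set up cleanly either: the natural candidate map $x\mapsto\bigl(x,r(x)\bigr)$ is \emph{not} a lattice embedding, because modularity of $r$ gives only $r(x\vee y)+r(x\wedge y)=r(x)+r(y)$ and in no way forces $r(x\vee y)=\max\{r(x),r(y)\}$ or $r(x\wedge y)=\min\{r(x),r(y)\}$ (the theorem does not even assume $r$ is monotone). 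Consequently there is no evident way to define lifted $\alpha',\beta',\gamma',\delta'$ on $\cL'$ whose AD~hypothesis on $\cL'$ is ``inherited'' from \eqref{eq:AD-condition}, and moreover a single AD~application on $\cL'$ would again produce one scalar inequality rather than the family of coefficient inequalities you need.

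The mechanism the paper uses at this point is genuinely different and is the crux of the argument: after reducing to $X=Y=\cL$, the coefficient $\Phi_{\db}$ of $\qb^{\db}$ in $\alpha_{\lqr}(\cL)\beta_{\lqr}(\cL)-\gamma_{\lqr}(\cL)\delta_{\lqr}(\cL)$ is regrouped, via modularity, according to the pair $\bigl(x\wedge y,\,x\vee y\bigr)$. For each interval $[u,v]$ with $u\ne v$ one passes to the Boolean sublattice of complemented elements, forms the twisted functions $\alpha'(x)=\alpha(x)\beta(x^c)$, $\beta'(x)=\alpha(x^c)\beta(x)$, $\gamma'(x)=\gamma(x)\delta(x^c)$, $\delta'(x)=\gamma(x^c)\delta(x)$, checks that they satisfy \eqref{eq:AD-def}, and applies the classical AD~inequality together with the symmetry $\alpha'(\cL)=\beta'(\cL)$, $\gamma'(\cL)=\delta'(\cL)$ to conclude $\alpha'(\cL)\le\gamma'(\cL)$, i.e.\ the grouped contribution is $\le 0$; the diagonal pairs $(u,u)$ are handled directly from \eqref{eq:AD-condition}. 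None of this is in your sketch, so as written the proof of the single‑variable step is not established. If instead you intended simply to cite Christofides' $q$-AD~inequality as a black box after the reduction to $\ell=1$, that would close the argument — but then you should say so explicitly rather than attempt to rederive it, since the rederivation you outline is the part that fails.
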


\smallskip

Our proof is strongly inspired by those of Bj\"orner~\cite{Bjo11} and Christofides~\cite{Chri}.
We closely follow the presentation from the former while incorporating some ideas from the
latter paper.

\subsection{The proof} \label{ss:AD-multi-proof}
We start by proving the following special case of Theorem~\ref{thm:multi-q-AD},
which we use to obtain the theorem in the full generality.

\smallskip

\begin{prop}\label{prop:multi-q-AD-spec}
	Let \ts $\LL=(\cL,\wedge,\vee)$, $\alpha$, $\beta$, $\gamma$, $\delta$ \ts be as in Theorem~\ref{thm:multi-q-AD}.
	Then:
\begin{equation}\label{eq:prop-AD-multi}
\alpha_{\lqr}(\cL) \. \beta_{\lqr}(\cL) \ \leqs_{\qb} \ \gamma_{\lqr}(\cL) \. \delta_{\lqr}(\cL).
\end{equation}	
\end{prop}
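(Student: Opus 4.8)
The plan is to prove Proposition~\ref{prop:multi-q-AD-spec} by induction on $|\cL|$, following the Ahlswede--Daykin strategy adapted to the multivariate weighted setting, in the spirit of Bj\"orner~\cite{Bjo11} and Christofides~\cite{Chri}. The base case $|\cL|=1$ is immediate from \eqref{eq:AD-condition}. For the inductive step, I would pick a coatom (or atom) and split $\cL$ along a prime quotient: since $\LL$ is a finite distributive lattice, there is a principal filter $\cF$ and principal ideal $\cI$ with $\cL = \cF \sqcup \cI$, $\cF\wedge\cF\subseteq\cF$, $\cI\vee\cI\subseteq\cI$, $x\vee y\in\cF$ whenever $x\in\cF$, and $x\wedge y\in\cI$ whenever $y\in\cI$ (this is the standard "sublattice decomposition" used in the proof of the classical AD inequality, see \cite[$\S$6.1]{AS}). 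The aim is to reduce the inequality on $\cL$ to the corresponding inequalities on $\cF$ and on $\cI$, each of strictly smaller size.

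The key computation is then the following. Writing $\alpha_\cF, \alpha_\cI$ for the restrictions and setting $a_1 := \alpha_{\lqr}(\cF)$, $a_2 := \alpha_{\lqr}(\cI)$, and similarly $b_i, c_i, d_i$ for $\beta,\gamma,\delta$, one has $\alpha_{\lqr}(\cL) = a_1 + a_2$ etc., so the goal \eqref{eq:prop-AD-multi} becomes
\begin{equation*}
(a_1+a_2)(b_1+b_2) \ \leqs_{\qb} \ (c_1+c_2)(d_1+d_2).
\end{equation*}
By the inductive hypothesis applied to $\cF$ (with the four functions $\alpha_\cF,\beta_\cF,\gamma_\cF,\delta_\cF$, noting the hypothesis \eqref{eq:AD-condition} is inherited since $\cF$ is a sublattice) we get $a_1 b_1 \leqs_\qb c_1 d_1$; applied to $\cI$ we get $a_2 b_2 \leqs_\qb c_2 d_2$. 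The remaining cross terms $a_1 b_2 + a_2 b_1 \leqs_\qb c_1 d_2 + c_2 d_1$ require a separate argument: one applies the inductive hypothesis to the pair of sublattices with cleverly chosen "mixed" functions — roughly, using $\gamma'(x) = \sum_{y\in\cI}\delta_{\lqr}$-type majorants built so that the hypothesis $\gamma'(x\vee y)\delta'(x\wedge y)\ges_\qb \alpha(x)\beta(y)$ still holds by \eqref{eq:AD-condition} applied across the $\cF/\cI$ boundary. Here the modularity of each $r_i$ is exactly what makes $\qb^{\rb(x\vee y)}\qb^{\rb(x\wedge y)} = \qb^{\rb(x)}\qb^{\rb(y)}$, so the weight bookkeeping is multiplicative and compatible with the splitting.

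The main obstacle I anticipate is the cross-term inequality $a_1 b_2 + a_2 b_1 \leqs_\qb c_1 d_2 + c_2 d_1$, and specifically doing it while preserving coefficientwise ($\leqs_\qb$) domination rather than just pointwise nonnegativity. In the classical (scalar) AD proof this step is a short but nontrivial algebraic lemma about four nonnegative reals satisfying certain products inequalities; in the $q$-analogue one needs its polynomial-coefficient version, which is where Christofides's contribution lies. The natural approach is to define, on the smaller lattice, auxiliary functions whose $\lqr$-sums are precisely $a_1,a_2,b_1,b_2$ on one side and appropriate majorants on the other, verify the product hypothesis \eqref{eq:AD-condition} for these auxiliary functions using \eqref{eq:AD-condition} for the originals together with the modularity identity for $\rb$, and invoke the inductive hypothesis once more. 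I would carefully track that all the intermediate quantities lie in $\Rb_+[q_1,\ldots,q_\ell]$ and that each inequality used is a genuine $\leqs_\qb$, since this is the delicate point distinguishing the statement from its weaker real-valued counterpart. Once the special case (Proposition~\ref{prop:multi-q-AD-spec}) is established, the full Theorem~\ref{thm:multi-q-AD} follows by the standard trick of replacing $\alpha$ with $\alpha\cdot\mathbf{1}_X$, $\beta$ with $\beta\cdot\mathbf{1}_Y$, and taking $\gamma,\delta$ supported on $X\vee Y$ and $X\wedge Y$ respectively — a reduction I would carry out after the proposition.
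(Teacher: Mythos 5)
The paper's proof does not proceed by induction on $|\cL|$ at all, so your route is genuinely different; and, as sketched, it has a real gap precisely at the spot you flag. Your plan to split $\cL = \cF \sqcup \cI$ and reduce the cross term $a_1 b_2 + a_2 b_1 \leqs_\qb c_1 d_2 + c_2 d_1$ to ``the inductive hypothesis with cleverly chosen mixed functions'' is not an argument but a placeholder, and I do not see how to fill it. In the classical real-valued AD induction, the analogous step (the four-number lemma for the two-element lattice) is proved by dividing through by $c_1 d_0$ and factoring the resulting numerator as a product of two nonnegative reals; that argument is not available here, since you cannot divide in $\Rb_+[q_1,\ldots,q_\ell]$ and the product of two polynomials that dominate zero coefficient-wise need not dominate zero coefficient-wise in the way the lemma requires. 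You would need an entirely new device to close that step, and the proposal as written does not supply one.

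The paper avoids this obstacle by a different mechanism: rather than inducting on the lattice, it fixes a multidegree $\db$ and extracts the coefficient $\Phi_\db$ of $\qb^\db$ in $\alpha_{\lqr}(\cL)\beta_{\lqr}(\cL) - \gamma_{\lqr}(\cL)\delta_{\lqr}(\cL)$, which is a finite sum of real numbers $\phi(x,y) = \alpha(x)\beta(y) - \gamma(x)\delta(y)$ over pairs with $\rb(x)+\rb(y)=\db$. By the modularity of the $r_i$, such pairs can be regrouped according to $(u,v) = (x\wedge y,\, x\vee y)$, so $\Phi_\db$ is a sum of quantities $\psi(u,v)$ over comparable pairs with $\rb(u)+\rb(v)=\db$, plus diagonal terms $\phi(u,u) \le 0$. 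Each $\psi(u,v)$ is then shown to be $\le 0$ by passing to the sublattice of complemented elements in $[u,v]$ (which is Boolean), forming the auxiliary functions $\alpha'(x)=\alpha(x)\beta(x^c)$, etc., and invoking the ordinary real-valued AD inequality. In short, the paper reduces coefficient-by-coefficient to a real-number problem and applies the classical theorem there, entirely sidestepping the need for a polynomial-level four-functions lemma. This is the idea, going back to Bj\"orner and Christofides, that your proposal is missing; I would recommend abandoning the direct induction and adopting the coefficient-extraction-plus-modularity regrouping instead. (Your final reduction from the proposition to the full theorem, via indicator functions $\mathbf{1}_X,\mathbf{1}_Y,\mathbf{1}_{X\vee Y},\mathbf{1}_{X\wedge Y}$, matches the paper and is fine.)
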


\smallskip

\begin{proof}[Proof of \, Proposition~\ref{prop:multi-q-AD-spec} \ $\Longrightarrow$ \ts  Theorem~\ref{thm:multi-q-AD}]
	Let \. $\alpha',\beta',\gamma',\delta':\cL \to \Rb_+$ \. be functions given by
	\[ \alpha' \ := \  \alpha \circ \mathbf{1}_{X}\,, \qquad \beta' \ := \  \beta \circ \mathbf{1}_{Y}\,, \quad \gamma' \ := \  \gamma \circ \mathbf{1}_{X \vee Y}\,, \quad \delta \ := \  \delta' \circ \mathbf{1}_{X \wedge Y}\,.  \]
	Note that
\begin{equation}\label{eq:four-functions-prime}
	\alpha'(x) \.\cdot \. \beta'(y) \ \leq \ \gamma'(x \vee y) \.\cdot \. \delta'(x \wedge y) \quad \text{ for every } \ x,y \in \cL.
\end{equation}
	Indeed, the LHS of \eqref{eq:four-functions-prime} is equal to $0$ if  \. $x \notin A$ \. or \. $y \notin B$\.,
	so suppose that \. $x \in A, \. y \in B$\..
	Then the inequality reduces to \eqref{eq:AD-condition}, which is part of the assumption.
	The inequality \eqref{eq:AD-multi} then follows from \eqref{eq:prop-AD-multi} by noting that
\begin{alignat*}{2}
 \alpha'_{\lqr}(\cL) \  &= \ \alpha_{\lqr}(X), \qquad &&\beta'_{\lqr}(\cL) \  = \ \beta_{\lqr}(Y), \\
  \gamma'_{\lqr}(\cL) \  &= \ \gamma_{\lqr}(X \vee Y), \qquad &&\delta'_{\lqr}(\cL) \  = \ \delta_{\lqr}(X \wedge Y),
\end{alignat*}
	as desired.
\end{proof}

\smallskip

\begin{proof}[Proof of Proposition~\ref{prop:multi-q-AD-spec}]
	Let
\[ \Phi(\qb,\rb) \ := \  \alpha_{\lqr}(\cL)  \.\cdot \.  \beta_{\lqr}(\cL) \ - \ \gamma_{\lqr}(\cL)  \.\cdot \.  \delta_{\lqr}(\cL).\]
For $x,y \in \cL$, we also define
\begin{align*}
	\phi(x,y) \ &:= \  \alpha(x) \.\cdot \. \beta(y) \, - \, \gamma(x) \.\cdot \.  \delta(y).
\end{align*}
A simple computation shows that
\[ \Phi(\qb,\rb) \ = \ \sum_{(x,y) \ts \in \ts \cL^2}   \phi(x,y) \.  \qb^{\rb(x)+ \rb(y)}.\]
Let \. $\db:=(d_1,\ldots,d_\ell) \in \nn^{\ell}$ \. be an arbitrary integer vector.
Denote by
\[  \Phi_{\db} \ := \  \big[q_1^{d_1} \. \cdots \, q_\ell^{d_\ell}\big] \, \Phi(q)
\]
the coefficient of the monomial \. $\qb^{\db}$ \. in   \. $\Phi(q)$.  We then have:
\[  \Phi_{\db} \ = \   \sum_{\substack{(x,y) \ts \in \ts \cL^2,\\ \rb(x)+ \rb(y)\ts =\ts \db}} \. \phi(x,y).\]

We now consider another, slightly coarser, grouping of terms.
For \ts $u,v \in \cL$ \ts satisfying \ts $u \prec^\di v$, so in particular \. $u \neq v$,
let \ts $C(u,v)$ \ts denote the set of (ordered) pairs \ts $(x,y)$ \ts in the interval \ts $[u,v]$ \ts
such that  \. $x \wedge y= v$ \. and \. $x \vee y = u$.
Let
\[ \psi(u,v) \ := \  \sum_{(x,y)  \ts \in \ts C(u,v)} \. \phi(x,y). \]
It follows from the modularity of \. $r_1,\ldots, r_\ell$ \.  that
\[ \Phi_{\db} \ = \  \sum_{\substack{u \ts \prec^\di \ts v, \\ \. \rb(u)+\rb(v) \ts = \ts\db}} \  \psi(u,v)  \ + \   \sum_{\substack{u  \ts \in \ts  \cL,\\ \. 2 \ts \rb(u) \ts = \ts \db}} \. \phi(u,u).\]
Since \. $\phi(u,u) = \alpha(u) \beta(u) \. - \. \gamma(u)\delta(u) \. \leq \. 0 $ \.  by \eqref{eq:AD-condition},
the proposition follows from Claim~\ref{claim:q-FKG} below. \end{proof}

\smallskip

\begin{claim}\label{claim:q-FKG}
In notation above, for every \ts $u,v\in \cL$ \ts such that \ts $u \prec^\di v$, we have \. $\psi(u,v) \ts \leq  \ts 0$.
\end{claim}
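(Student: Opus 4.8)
The plan is to analyze the set $C(u,v)$ of ordered pairs $(x,y)$ with $x \wedge y = v$ and $x \vee y = u$ inside the interval $[u,v]$ (where $u \prec^\di v$ means $u$ is covered by $v$ in the lattice, so $v \prec u$ in the order-theoretic sense used here — note the paper's meet/join convention is swapped). The key structural fact is that, because $\LL$ is distributive and $u \prec^\di v$, the interval $[u,v]$ is a Boolean lattice: its atoms over $v$ are finitely many complementary pairs, and every element of $[v,u]$ is a join of some subset of the "upper" part and the complementary meet structure. More usefully, one shows that each pair $(x,y) \in C(u,v)$ is determined by a subset $S$ of a fixed index set $I$ (the set of atoms of the interval), with $y = v \vee (\text{the elements indexed by } S)$ (in the appropriate direction) and $x$ the complementary element; conversely every subset gives such a pair. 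So $C(u,v)$ is in bijection with pairs $(S, I \setminus S)$, i.e.\ with the power set $2^I$.

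With that indexing in hand, I would rewrite $\psi(u,v) = \sum_{S \subseteq I} \phi(x_S, y_S)$ where $\phi(x,y) = \alpha(x)\beta(y) - \gamma(x)\delta(y)$, and $x_S \vee y_S = u$, $x_S \wedge y_S = v$ for all $S$. Using the hypothesis \eqref{eq:AD-condition}, for every such pair we have $\alpha(x_S)\beta(y_S) \le \gamma(u)\delta(v)$ and similarly $\gamma(x_S)\delta(y_S) \ge$ nothing directly — so the naive termwise bound fails, which is exactly why this is the crux. The standard route (following the AD/FKG induction as in \cite{AS} and Bj\"orner \cite{Bjo11}) is to induct on $|I|$. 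When $|I| = 0$ the pair is $(v \text{ or } u, \ldots)$ degenerate and $\psi(u,v)$ reduces to a single $\phi(u,u)$-type term handled by \eqref{eq:AD-condition}; when $|I| = 1$ there are two pairs and one checks $\psi \le 0$ by a direct $2\times 2$ computation using that $\alpha\beta \le \gamma\delta$ on the two cross pairs. For the inductive step, split $I = I' \sqcup \{j\}$ and group the $2^{|I|}$ pairs according to whether $j \in S$; this expresses $\psi(u,v)$ in terms of four sums over $2^{I'}$ which can be arranged as an instance of the four-function inequality on the smaller Boolean lattice, to which the inductive hypothesis (or the already-established classical AD inequality, Theorem~\ref{thm:AD}) applies.

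Concretely, I expect the cleanest approach is: define on the Boolean lattice $2^I$ the four functions $\widetilde\alpha(S) := \alpha(x_S)$, $\widetilde\beta(S) := \beta(y_S)$, $\widetilde\gamma(S) := \gamma(x_{S})$ composed appropriately with the lattice operations of $2^I$, check that the covering-pair hypothesis \eqref{eq:AD-condition} descends to the four-function hypothesis on $2^I$ (this uses that $x_{S \cap T} = x_S \wedge x_T$ and $x_{S \cup T} = x_S \vee x_T$ within the interval, i.e.\ the maps $S \mapsto x_S$, $S \mapsto y_S$ are lattice (anti)homomorphisms onto $[v,u]$ — a consequence of distributivity), and then apply the ordinary AD inequality (Theorem~\ref{thm:AD}) with $X = Y = 2^I$ to conclude $\sum_S \widetilde\alpha(S) \cdot \sum_S \widetilde\beta(S) \le \sum_S \widetilde\gamma(\cdot) \cdot \sum_S \widetilde\delta(\cdot)$. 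That inequality, after re-indexing, is precisely $\psi(u,v) \le 0$ provided one notes $\sum_{S} \alpha(x_S)\beta(y_S)$ versus $\sum_S \gamma(x_S)\delta(y_S)$ — but here one must be careful, because $\psi$ sums $\phi(x,y) = \alpha(x)\beta(y) - \gamma(x)\delta(y)$ over the \emph{same} pair, not a product of sums; so the reduction is rather to observe that $\sum_{(x,y)\in C(u,v)} \alpha(x)\beta(y) \le \sum_{(x,y)\in C(u,v)} \gamma(x)\delta(y)$ directly, and THIS is the four-function statement on $2^I$ once we recognize the left side as $\sum_{S} \widetilde\alpha(S)\widetilde\beta(I\setminus S)$ and match it to a product via the self-duality $S \leftrightarrow I \setminus S$.

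The main obstacle, and where the real work lies, is verifying that the correspondence $S \mapsto (x_S, y_S)$ is a genuine bijection $2^I \to C(u,v)$ compatible with the lattice operations — i.e.\ that distributivity of $\LL$ forces the interval between a covering pair (after collapsing the "diagonal") to be Boolean with the pairs in $C(u,v)$ indexed exactly by complementary subsets, and that $\alpha,\gamma$ pull back to functions on $2^I$ satisfying the four-function hypothesis. Once that structural lemma is in place, the inequality $\psi(u,v) \le 0$ is just the classical Ahlswede--Daykin inequality (Theorem~\ref{thm:AD}) applied on the Boolean lattice $2^I$, so no new induction is strictly needed — though one could alternatively reprove it by the same induction on $|I|$ that underlies Theorem~\ref{thm:AD} itself. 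I would present the structural lemma first, then deduce the claim in one line.
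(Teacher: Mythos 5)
Your proposal lands on the right overall strategy---restrict to the interval and to its Boolean sublattice of complemented elements, then invoke the classical AD inequality---and you correctly diagnose the central difficulty: $\psi(u,v)$ is a \emph{diagonal} sum $\sum \alpha(x)\beta(x^c) - \gamma(x)\delta(x^c)$ over complementary pairs, whereas the AD conclusion controls a \emph{product} of two sums. But the phrase ``match it to a product via the self-duality $S\leftrightarrow I\setminus S$'' does not actually close that gap, and this is precisely where the real work lies. What is needed, and what the paper supplies, is a composite-function construction: set
\[
\alpha'(x):=\alpha(x)\,\beta(x^c),\quad \beta'(x):=\alpha(x^c)\,\beta(x),\quad \gamma'(x):=\gamma(x)\,\delta(x^c),\quad \delta'(x):=\gamma(x^c)\,\delta(x).
\]
Then $\psi(\widehat0,\widehat1)=\alpha'(\cL)-\gamma'(\cL)$, and one verifies (using the original hypothesis \eqref{eq:AD-condition} twice together with the De Morgan identities $(x\vee y)^c=x^c\wedge y^c$, $(x\wedge y)^c=x^c\vee y^c$) that $\alpha',\beta',\gamma',\delta'$ themselves satisfy the four-function hypothesis. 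Applying the ordinary AD inequality gives $\alpha'(\cL)\beta'(\cL)\le\gamma'(\cL)\delta'(\cL)$; then, since $\beta'(\cL)=\alpha'(\cL)$ and $\delta'(\cL)=\gamma'(\cL)$ by construction (this is the genuine use of self-duality), one gets $\alpha'(\cL)^2\le\gamma'(\cL)^2$ and hence $\alpha'(\cL)\le\gamma'(\cL)$. Your sketch gestures at this but never defines the primed functions, never verifies they satisfy \eqref{eq:AD-condition}, and never exploits $\alpha'(\cL)=\beta'(\cL)$; as written, the claim that the diagonal sum ``is the four-function statement on $2^I$'' is false.

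Two smaller issues. First, $\prec^\di$ here means strict order $<$, not the covering relation: if $u$ were covered by $v$ the interval would be $\{u,v\}$ and the whole decomposition of $\Phi_{\db}$ over pairs would miss most of $\cL^2$. Second, the interval $[u,v]$ itself need not be Boolean; it is only the sublattice of \emph{complemented} elements of $[u,v]$ that is Boolean. Since $\psi(u,v)$ is supported on complemented elements, one is entitled to restrict to that sublattice, but the restriction step should be made explicit rather than asserting that the interval is already a Boolean lattice.
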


\smallskip

\begin{proof}[Proof of Claim~\ref{claim:q-FKG}]
	Note that \ts $\psi(u,v)$ \ts depends only on elements in the poset interval \ts $[u,v]$\ts,
	so by restricting to \ts $[u,v]$ \ts if necessary,
	we can without loss of generality assume that \ts $u=\widehat{0}$ \ts is the unique minimal element of $\cL$,  and \ts $v=\widehat{1}$ \ts is the unique maximal element of $\cL$.
	
	For \ts $x \in \cL$\ts,  a \defn{complement} of \ts $x$ \ts is an element \ts $y \in \cL$  \ts such that \. $x\wedge y=\widehat{0}$ \. and \. $x \vee y = \widehat{1}$\..
	Note that in a finite distributive lattice every element has at most one complement (see e.g.\ \cite[Thm~10,~p.~12]{Bir}), and we denote this element by $x^c$ if it exists.
	Note that \ts $\psi(\widehat{0},\widehat{1})$ \ts depends only on elements that have a complement in~$\cL$, and that the set of complemented elements in a finite distributive lattice form a sublattice of $\cL$ (see e.g.\ \cite[p.~18]{Bir}).
	By restricting to this sublattice if necessary, without loss of generality we can assume that  every element \ts $x \in \cL$ \ts has a unique complement \ts $x^c$ \ts (i.e., when \ts $\cL$ \ts is a Boolean lattice).
	
	Define four new functions \. $\alpha',\beta',\gamma',\delta' \. : \. \cL \to \Rb_+$ \. as follows:
	\begin{align*}
		\alpha'(x) \ := \ \alpha(x) \. \beta(x^c), \quad
		\beta'(x) \ := \ \alpha(x^c) \. \beta(x), \quad
		\gamma'(x) \ := \ \gamma(x) \. \delta(x^c), \quad
		\delta'(x) \ := \ \gamma(x^c) \. \delta(x).	
	\end{align*}
	Note that
	\[  \psi(\widehat{0},\widehat{1}) \ = \ \sum_{x \in \cL} \phi(x,x^c) \ = \  \sum_{x \in \cL} \alpha(x)  \beta(x^c) \ -  \ \gamma(x)\delta(x^c) \ = \ \alpha'(\cL) \ - \ \gamma'(\cL). \]
	It thus suffices to show that \. $\alpha'(\cL) \leq \gamma'(\cL)$.
Now observe that, for any \ts $x,y \in \cL$, we have:
\begin{align*}
	\alpha'(x) \beta'(y) \ &= \  \big(\alpha(x) \beta(y) \big) \. \big( \alpha(y^c) \beta (x^c) \big) \
	\leq_{\eqref{eq:AD-condition}} \ \big( \gamma(x \vee y) \. \delta(x \wedge y ) \big) \.
	\big( \gamma(y^c \vee x^c) \. \delta(y^c \wedge x^c ) \big) \\
	&\leq \ \gamma(x \vee y) \. \delta((y\vee x)^c) \. \gamma((y \wedge x)^c) \. \delta(x \wedge y) \ = \ \gamma'(x \vee y) \. \delta'(x \wedge y).
\end{align*}
	It then follows from the (usual) AD~inequality \eqref{eq:AD}, that
\begin{equation}\label{eq:abgd}
  \alpha'(\cL) \. \beta'(\cL) \ \leq \ \gamma'(\cL) \. \delta'(\cL).
\end{equation}
On the other hand, note that \. $\beta'(\cL) = \alpha'(\cL) $ \. and \. $\gamma'(\cL) = \delta'(\cL)$ \. by definition of the functions.
%
%
Since the functions are nonnegative, \eqref{eq:abgd} gives \. $\alpha'(\cL) \leq \gamma'(\cL)$.
This completes the proof.
\end{proof}	

\bigskip

\section{Proof of main results}\label{s:proof-main}

%
%
%
%


\subsection{Proof of Theorem~\ref{t:main-OP-multi}}
Let \. $\alpha,\beta,\gamma, \delta: \cL \to \Rb_{+}$ \. be as in \eqref{eq:Fish-ftn}.
Note that these functions satisfy the assumption \eqref{eq:Fish-AD-cond} of the
multivariate AD inequality.

Let $\qb:=(q_1,\ldots,q_n)$ be variables, with $n=|X|$.
	For any $i \in [n]$,
	let \. $r_i:\cL \to \Rb_+$ \.
	be the modular function given by \. $r_i(T) \. := \. T(x_i)$.	
For a subset \ts $Y\subseteq X$, denote
$$\qb^{\mathbf{n}(Y)} \, := \, \prod_{x_i \ts \in \ts Y} \. (q_i)^t\,.
$$
Then:
	\begin{alignat*}{2}
		\alpha_{\lqr}(\cL) \ &= \    \Omega_{\qb}(X-A-C,t) \. \cdot \. \qb^{\mathbf{n}(C)}, \\
		\beta_{\lqr}(\cL)  \ &= \   \Omega_{\qb}(X-B-D,t) \. \cdot \. \qb^{\mathbf{n}(D)},\\
		\gamma_{\lqr}(\cL) \ &= \   \Omega_{\qb}(X-V ,t) \. \cdot \. \qb^{\mathbf{n}(C\cup D)}, \\
		\delta_{\lqr}(\cL) \ &= \    \Omega_{\qb}(X-W,t) \. \cdot \. \qb^{\mathbf{n}(C\cap D)}.
	\end{alignat*}
	The theorem now follows from the multivariate AD inequality \eqref{eq:AD-multi}.
\qed

\smallskip


\smallskip

%

\subsection{Proof of Theorem~\ref{t:main-K}}
		Let \. $\alpha,\beta,\gamma, \delta: \cL \to \Rb_{+}$ \. be as in \eqref{eq:Fish-ftn}, with $t \gets N$.
	Note that these functions satisfy the assumption of the multivariate AD inequality (see~\eqref{eq:Fish-AD-cond}).
	Let $\qb:=(q_0,\ldots,q_N)$ be variables.
	For any $i \in \{0,\ldots,N\}$,
	let \. $r_i:\cL \to \Rb_+$ \.
	be the modular function where \. $r_i(T) \. := \. |\{ x \in X \. : \. T(x)=i \}|$ \. is the number of  $i$'s  in $T$.	
	Then
		\begin{alignat*}{2}
		\alpha_{\lqr}(\cL) \ &= \     \rK_{\bbz}(X-A-C,N) \. \cdot \.  q_0^{|A|} \. q_N^{|C|}\., \\
		\beta_{\lqr}(\cL)  \ &= \    \rK_{\bbz}(X-B-D,M) \. \cdot \. q_0^{|B|} \. q_N^{|D|}\.,\\
		\gamma_{\lqr}(\cL) \ &= \    \rK_{\bbz}(X-V ,N) \. \cdot \. q_0^{|A \cap B|} \. q_N^{|C \cup D|}\., \\
		\delta_{\lqr}(\cL) \ &= \     \rK_{\bbz}(X-W,M) \. \cdot \. q_0^{|A \cup B|} \. q_N^{|C \cap D|}\..
	\end{alignat*}
	The theorem now follows from the multivariate AD inequality \eqref{eq:AD-multi}.
\qed

\medskip

\section{Back to Schur polynomials}\label{s:Schur}

In this section we give a new proof of the Lam--Pylyavskyy inequality \eqref{eq:LP2}
for Schur polynomials via the  multivariate AD inequality.

 \smallskip

\subsection*{Proof of Theorem~\ref{t:P-Schur}}
%
%
	Let \ts $\cP:= \cP_\la$ be the poset of the Young diagram of shape $\lambda$, where \ts $\la:= \mu \vee \nu$.
 	Let \ts $\LL:=(\cL',\wedge',\vee')$ \ts be the distributive lattice given by \. $\cL':=\RPP(\la,N)$\.,  with the $\vee'$ and $\wedge'$ operation given by
 	\[ (S \vee' T)(i,j) \ := \  \max\{S(i,j), T(i,j) \}, \qquad (S \wedge' T)(i,j) \ := \  \min\{S(i,j), T(i,j) \}. \]

For a skew Young diagram \ts $\pi/\tau$ \ts
such that $\pi \subset \lambda$,
let \. $\phi^{\pi/\tau}:\cL' \to \Rb_+$ \.  be the characteristic function of the reverse plane partition $T\in \text{RPP}(\lambda,N)$ satisfying all these properties:
	\begin{align*}
				& T(i.j) \geq 1 \quad \text{ for } (i,j)  \in \la, \\
		&T(i,j) =  1 \quad \text{ for } (i,j) \in \tau \qquad \text{ and } \qquad
		T(i,j) =  N \quad \text{ for } (i,j) \in {\la/\pi},\\
	 		&T(i,j) \ < \ T(i+1,j) \quad \text{if } \ (i,j), \. (i+1,j) \in {\pi/\tau}.
	\end{align*}
Note that these reverse plane partitions are in bijection with semistandard Young tableau of $\pi/\tau$ in $\SSYT(\pi/\tau,N)$.


	We define functions \. $\zeta,\eta,\xi,\rho \ts : \ts \cL \to \Rb_+$ \. as follows:
\begin{align*}
	\ze \ := \ \phi^{\mu/\alpha}, \qquad  \eta \ := \ \phi^{\nu/\beta},
	\qquad \xi \ := \ \phi^{\mu/\al \ts \wedge  \ts \nu/\be}, \qquad
	\rho \ := \ \phi^{\mu/\al \ts \vee \ts \nu/\be}.
\end{align*}
We  now show that these functions satisfy the assumption of the multivariate AD inequality, i.e. for any $S,T \in \cL$:
\begin{align*}
	\ze(S) \.\cdot\. \eta(T) \ \leq \ \xi(S \vee T) \.\cdot\. \rho(S \wedge T),
\end{align*}
The equation is vacuously true if \. $\ze(S)=0$ \. or \. $\eta(T)=0$, so assume \. $\ze(S)=\eta(T)=1$\..
We show only the proof that \. $\xi(S \vee T)=1$\.,
as the proof of  \. $\rho(S \wedge T)=1$ \. is similar.
First, for \ts $(i,j) \in \la$, we have:
\[  [S\vee T](i,j) \ = \  \max\{S(i,j), T(i,j) \} \ \geq  \ 1. \]
Second, for  $(i,j) \in {\alpha \wedge \beta}$,
\[ [S\vee T](i,j) \ = \  \max\{S(i,j), T(i,j) \} \ = \ 1, \]
Third, for $(i,j) \in {\lambda/(\mu \wedge  \nu)}$,
\[ [S\vee T](i,j) \ =  \  \max\{S(i,j), T(i,j) \} \ = \ N. \]
Fourth, let  \. $(i,j), \. (i+1,j) \in {(\mu \wedge \nu)/(\alpha \wedge \beta)}$.
We will need to show that
\begin{equation}\label{eq:Sch-1}
 [S \vee T](i,j) \, < \, [S \vee T](i+1,j).
\end{equation}
Note that we must have either \. $(i,j) \in (\mu\wedge \nu)/\alpha$ \. or
\. $(i,j) \in (\mu \wedge\nu)/\be$\..
Without loss of generality, we assume the former holds.
Then it follows that $(i+1,j) \in (\mu \wedge \nu)/\alpha$.
Since \ts $\ze(S)=1$, this implies that
\[   S(i,j) \ < \  S(i+1,j)  \ \leq \ \max\{S(i+1,j), T(i+1,j)\} \ = \ [S \vee T](i+1,j). \]
Thus \eqref{eq:Sch-1} follows if \. $T(i,j) \leq S(i,j)$\., so suppose instead that \. $T(i,j) > S(i,j)$\..
This then implies \. $T(i,j) > 1$\..
Since \. $\eta(T)=1$\., this  implies that \. $(i,j) \in (\mu \wedge\nu)/\beta $, which in turn implies that \. $(i+1,j) \in (\mu \wedge\nu)/\beta $.
Thus we have:
\[ [S\vee T](i,j) \ = \  T(i,j) \ < \ T(i+1,j) \ \leq \ [S \vee T](i+1,j), \]
which completes the proof of \eqref{eq:Sch-1}.

%
%
%
%

Let \. $\bbz:=(z_1,\ldots, z_N)$ \. be variables, and let
\. $r_i:\cL \to \nn$, \. $i \in [N]$, \. be the modular function defined  as follows:
\. $r_i(T):=m_i(T)$ \ts is the number of $i$'s in $T$.  It then follows that
\begin{alignat*}{2}
	A_{\langle \bbz,\rb\rangle} \ &= \  s_{\mu/\al} \. \cdot \. q_1^{|\alpha|} q_N^{|\lambda|-|\mu|}\., \qquad
	&& B_{\langle \bbz,\rb\rangle} \ = \    s_{\nu/\be} \. \cdot \. q_1^{|\beta|} q_N^{|\lambda|-|\nu|}\.,  \\
		C_{\langle \bbz,\rb\rangle} \ &= \   s_{\mu/\al \ts \wedge \nu/\beta} \. \cdot \. q_1^{|\alpha \wedge \beta|} q_N^{|\lambda|-|\mu\wedge \nu|}\., \qquad
	&& D_{\langle \bbz,\rb\rangle} \ = \ s_{\mu/\al \ts \wedge \nu/\beta} \. \cdot \.  q_1^{|\alpha \vee \beta|} q_N^{|\lambda|-|\mu\vee \nu|}\..
\end{alignat*}
The theorem now follows from the multivariate AD inequality \eqref{eq:AD-multi}.
\qed

\smallskip

\begin{rem}\label{r:LP-gen}
By the arguments analogous to the proofs in this and previous section, specifically
the proof of \eqref{eq:Sch-1} to account for strict comparisons, the multivariate AD inequality can be used
to prove results analogous to Theorem~\ref{t:main-OP} and Theorem~\ref{t:main-K}
for both strict and non-strict \ts $(\cP,\omega)$-partitions (see definitions
in \cite[$\S$3.15.1]{EC}).  Similarly, we can extend out results to the more
general $\mathbb{T}$-labelled $(\cP,O)$ tableaux defined in~\cite{LP07}.
We omit the details for brevity.
\end{rem}


\medskip

\section{Multivariate Daykin--Daykin--Paterson inequality} \label{s:DDP}

\subsection{The DDP inequality} \label{ss:DDP-old}
Let \ts $\cP=(X,\prec)$ \ts be a \defnb{partially ordered set} \ts on \ts
$|X|=n$ \ts elements.  Fix \ts $t\ge 0$ \ts and an element \ts $z \in X$.
For  integer $0\le k \le t$, denote by \. $\PP(\cP,t \ts ;z,k)$ \.
the set of $\cP$-partitions \ts $A\in \PP(\cP,t)$ \ts such that $A(z)=k$.
Let  \. $\Omega(\cP,t \ts ; z,k):= \bigl|\PP(\cP,t \ts ;z,k)\bigr|$ \. be the
number of such $\cP$-partitions.
The following inequality was conjectured by Graham~\cite{Gra}
and proved by Daykin--Daykin--Paterson \cite{DDP}.

\smallskip

\begin{thm}[{\rm \defn{Daykin--Daykin--Paterson inequality}}{}]\label{t:DDP}
	Let \ts $\cP=(X,\prec)$ \ts be a finite poset, let $t\in \nn$, and let \ts $z \in X$.
	Then, for every \ts $0 \le k \le t$, we have:
	\begin{equation}\label{eq:DDP}
        \Omega(\cP,t \ts ;z,k)^2 \ \geq \ \Omega(\cP,t \ts ; z,k-1) \. \cdot \. \Omega(\cP,t \ts ; z,k+1).
	\end{equation}
More generally, for every positive integers \ts $a,b \geq 1$,
\begin{equation}\label{eq:DDP-strong}
	\Omega(\cP,t \ts ;z,k+a) \. \cdot \Omega(\cP,t \ts ;z,k+b) \ \geq \ \Omega(\cP,t \ts ; z,k) \. \cdot \. \Omega(\cP,t \ts ; z,k+a+b).
\end{equation}
\end{thm}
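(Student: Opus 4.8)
The plan is to deduce the stronger inequality~\eqref{eq:DDP-strong} directly from the Ahlswede--Daykin inequality (Theorem~\ref{thm:AD}); the log-concavity~\eqref{eq:DDP} is then the case $a=b=1$ of~\eqref{eq:DDP-strong} after replacing $k$ by $k-1$. By the symmetry of~\eqref{eq:DDP-strong} in $a$ and $b$ we may assume $1\le a\le b$, and set $m:=a+b$. If $k+m>t$ the right-hand side of~\eqref{eq:DDP-strong} vanishes and there is nothing to prove, so we may assume $0\le k$ and $k+m\le t$ (whence all four indices lie in $\{0,\dots,t\}$).

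The device I would use is to enlarge the value range: let
\[
\cL \ := \ \bigl\{\, A:X\to\{-b,-b+1,\dots,t\} \ : \ A(x)\le A(y)\ \text{ whenever }\ x\prec y \,\bigr\},
\]
a finite distributive lattice under pointwise $\max$ and $\min$ (it is isomorphic to $\PP(\cP,t+b)$ by shifting all values up by $b$). Inside $\cL$ I single out the four subsets
\[
\mathcal U:=\{A\in\cL:\,A\ge0,\ A(z)=k\},\qquad \mathcal D:=\{A\in\cL:\,A\le t-b,\ A(z)=k+a\},
\]
\[
\mathcal M_1:=\{A\in\cL:\,A\ge0,\ A(z)=k+a\},\qquad \mathcal M_2:=\{A\in\cL:\,A\le t-b,\ A(z)=k\},
\]
where $A\ge0$ and $A\le t-b$ are meant pointwise, and let $\alpha,\beta,\gamma,\delta$ be their indicator functions. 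Since $A\mapsto A+b$ is a bijection from $\{A\in\cL:A\le t-b\}$ onto $\PP(\cP,t)$ carrying the $z$-value $v$ to $v+b$, one reads off $\alpha(\cL)=\Omega(\cP,t;z,k)$, $\beta(\cL)=\Omega(\cP,t;z,k+m)$, $\gamma(\cL)=\Omega(\cP,t;z,k+a)$, and $\delta(\cL)=\Omega(\cP,t;z,k+b)$.

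The crux is to verify hypothesis~\eqref{eq:AD-def}: $\alpha(x)\,\beta(y)\le\gamma(x\vee y)\,\delta(x\wedge y)$ for all $x,y\in\cL$. This is trivial unless $x\in\mathcal U$ and $y\in\mathcal D$, so take such $x,y$. Then $x\vee y=\max(x,y)$ satisfies $x\vee y\ge x\ge 0$ and $x\vee y\le\max(t,t-b)=t$ pointwise, while $(x\vee y)(z)=\max(k,k+a)=k+a$, so $x\vee y\in\mathcal M_1$; dually $x\wedge y=\min(x,y)\le y\le t-b$ and $x\wedge y\ge -b$ pointwise, while $(x\wedge y)(z)=\min(k,k+a)=k$, so $x\wedge y\in\mathcal M_2$. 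Hence $\gamma(x\vee y)=\delta(x\wedge y)=1$ and~\eqref{eq:AD-def} holds. Applying~\eqref{eq:AD} with both subsets equal to $\cL$ then gives $\alpha(\cL)\,\beta(\cL)\le\gamma(\cL)\,\delta(\cL)$, which is exactly~\eqref{eq:DDP-strong}.

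The only genuine obstacle is locating this setup. Applying AD naively to the $z$-slices $\{A\in\PP(\cP,t):A(z)=v\}$ of $\PP(\cP,t)$ itself fails: there $(x\vee y)(z)$ is the \emph{maximum} of the two $z$-values and so never lands on an intermediate level $k+a$ or $k+b$. The fix is to shift one of the two extreme configurations down by $b$ — which is precisely what forces the enlarged range $\{-b,\dots,t\}$ — after which the join and meet redistribute the $z$-values $k$ and $k+m$ to $k+a$ and $k+b$. I expect this to be the content of Graham's suggested proof recalled in Remark~\ref{r:Graham}; once the lattice and the four functions are written down, every remaining check is routine. The same construction, run with the multivariate AD inequality (Theorem~\ref{thm:multi-q-AD}) in place of Theorem~\ref{thm:AD}, should also yield the weighted refinement taken up later in the section.
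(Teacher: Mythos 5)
Your proof is correct and is essentially identical to the paper's: the same enlarged lattice $\cL$ of order-preserving maps into $\{-b,\ldots,t\}$, the same four indicator functions (your $\mathcal{U},\mathcal{D},\mathcal{M}_1,\mathcal{M}_2$ are exactly the paper's $\alpha,\beta,\gamma,\delta$), the same shift $A\mapsto A+b$ to identify $\beta(\cL)=\Omega(\cP,t;z,k+a+b)$ and $\delta(\cL)=\Omega(\cP,t;z,k+b)$, and the same routine check of~\eqref{eq:AD-def}. Your guess that this realizes Graham's envisioned AD-based argument is confirmed by Remark~\ref{r:Graham}, and the paper does carry out the multivariate refinement in $\S$\ref{ss:DDP-multi} exactly as you anticipate.
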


\smallskip

We give a new proof of Theorem~\ref{t:DDP} as an application of the AD~inequality \eqref{eq:AD}.
The proof below sets the stage for the multivariate generalization of the theorem.

\smallskip

\begin{proof}[Proof of Theorem~\ref{t:DDP}]
	We denote by \. $\LL=(\cL,\vee,\wedge)$ \. the distributive lattice on the set \. $\cL$ \. given by
	\[  \cL \ := \ \big\{T: X \to \{-b,-b+1,\ldots,t\} \ \, :  \ \, T(x) \leq T(y) \ \ \forall \. x,y\in X \ \
    \text{s.t.} \, \ x\prec y\big\},
    \]
	the set of order-preserving functions such that \. $-b \leq T(x) \leq t$ \. for every $x \in X$.
	The join and meet operation  are given by
	\begin{align*}
		[S\vee T](x) \ := \ \max\{S(x), T(x)\} \quad \text{and}
		\quad [S \wedge T](x) \ := \    \min\{S(x), T(x)\},
	\end{align*}
for every \ts $x \in X$. It is straightforward to verify that $\LL$ is a distributive lattice.

	Let \. $\alpha,\beta,\gamma, \delta: \cL \to \Rb_{+}$ \. be characteristic function of subsets of \. $\cL$ \. defined as follows:
	\begin{align*}
		\alpha \ &:= \ \mathbf{1}\big\{ T(z)=k \ \text{ and } \ T(x) \geq 0, \ \text{ for all } \ x \in X \big\},\\
		\beta \ &:= \ \mathbf{1}\big\{ T(z)=k+a \ \text{ and } \ T(x) \leq  t-b, \ \text{ for all } \ x \in X \big\},\\
		\gamma \ &:= \ \mathbf{1}\big\{ T(z)=k+a \ \text{ and } \ T(x) \geq 0, \ \text{ for all } \ x \in X \big\},\\
		\delta \ &:= \ \mathbf{1}\big\{ T(z)=k \ \text{ and } \ T(x) \leq  t-b, \ \text{ for all } \ x \in X \big\}.
	\end{align*}
	
	We will now verify the assumption of AD inequality, i.e.\ for every \. $S,T \in \cL$, we have:
	\begin{equation}\label{eq:AD-cond}
	 \alpha(S) \.\cdot \. \beta(T) \ \leq \ \gamma(S \vee T) \.\cdot \. \delta(S \wedge T).
	\end{equation}
	Without loss of generality we can assume that \. $\alpha(S)=\beta(T)=1$.
	Note that
	\[  [S\vee T](z) \ = \ \max\{S(z),T(z)\} \ = \  \max\{k,k+a\} \ = \ k+a.\]
	Also note that, for every \ts $x \in X$,
	\[ [S\vee T](x) \ = \ \max\{S(x),T(x)\} \  \geq \ S(x) \ \geq \ 0. \]
	This shows that \. $\gamma(S \vee T)=1$.   Similarly, note that
	\[  [S\wedge T](z) \ = \ \min\{S(z),T(z)\} \ = \  \min\{k,k+a\} \ = \ k.\]
	Also note that, for every \ts $x \in X$,
	\[ [S\wedge T](x) \ = \ \min\{S(x),T(x)\} \  \leq \ T(x) \ \leq \ t-b. \]
	This shows that \. $\delta(S \wedge T)=1$, and completes the proof of \eqref{eq:AD-cond}.
	
\smallskip

	Now  note that
	\begin{align*}
		\alpha(\cL) \ &= \ \big|\{T\in \cL \ : \   T(z)=k \, \text{ and } \, 0 \leq T(x) \leq t \ \ \forall  \. x \in X \}\big| \ = \  \Omega(\cP,t \ts ;z,k), \quad \text{and}\\
		\gamma(\cL) \ &= \ \big|\{T\in \cL \ : \   T(z)=k+a \ \text{ and } \ 0 \leq T(x) \leq t \ \ \forall  \. x \in X \}\big| \ = \  \Omega(\cP,t \ts ;z,k+a).
	\end{align*}
	Also note that
	\begin{equation}\label{eq:Gra-beta}
		\begin{split}
			 \beta(\cL) \ &= \ \big|\{T\in \cL \ : \  T(z)=k+a \ \text{ and } \ -b \leq  T(x) \leq t-b \ \ \forall  \. x \in X \}\big| \\
			&  = \ \big|\{T'\in \cL \ : \   T'(z)=k+a+b \ \text{ and } \ 0 \leq  T'(x) \leq t \ \ \forall  \. x \in X \}\big| \\
                            & = \  \Omega(\cP,t \ts ;z,k+a+b),
		\end{split}
	\end{equation}
	where the second equality is obtained through the substitution \. $T'(x)\gets T(x)+b$.  Similarly, by the
same substitution we have:
	\begin{equation}\label{eq:Gra-delta}
		\begin{split}
			 \delta(\cL) \ &= \ \big|\{T\in \cL \ : \   T(z)=k \ \text{ and } \ -b \leq  T(x) \leq t-b \ \ \forall  \. x \in X \}\big| \\
			& = \ \big|\{T'\in \cL \ : \   T'(z)=k+b \ \text{ and } \ 0 \leq  T'(x) \leq t \ \ \forall  \. x \in X \}\big| \\
& = \  \Omega(\cP,t \ts ;z,k+b).
		\end{split}
	\end{equation}
Now \eqref{eq:DDP-strong} follows from the AD inequality \eqref{eq:AD}.
\end{proof}

\smallskip

\begin{rem}\label{r:Graham}
The original proof of the DDP inequality was through an explicit injection \cite{DDP}.
Curiously, Graham believed that there should exist a proof based on the FKG or AD~inequalities.
He lamented: ``such a proof has up to now successfully eluded all attempts to find it''
\cite[p.~15]{Gra}.  The proof above validates Graham's supposition.

We should also mention that if the order-preserving functions are replaced with
linear extensions, the DPP inequality \eqref{eq:DDP} becomes \defng{Stanley's inequality}
\cite{Sta}, a major result in the area for which finding a direct combinatorial proof
remans a challenging open problem.  We refer to \cite[$\S$6.3]{Pak-what} for an
extensive discussion and further references.
\end{rem}

\smallskip

\subsection{Multivariate DDP inequality}\label{ss:DDP-multi}
Let \. $\qb :=(q_1,\ldots, q_n)$ \.  be variables, and  fix a natural labeling \.
$X=\{x_1,\ldots, x_n\}$.  Define
 \[  \Omega_{\qb}(\cP,t \ts ;z,k) \ := \ \sum_{A\ts \in \ts \PP(\cP, \ts t \ts ;\ts z,\ts k)} q_1^{A(x_1)}  \cdots \. q_n^{A(x_n)}\..
 \]
We now present the multivariate version of DDP inequality~\eqref{eq:DDP},
proved by the multivariate AD inequality~\eqref{eq:AD-multi}.

\smallskip

\begin{thm}[{\rm \defn{multivariate DDP inequality}}{}]\label{t:DDP-multi}
	Let \ts $\cP=(X,\prec)$ \ts be a finite poset, let $t\in \nn$, and let \ts $z \in X$.
	Then, for every \ts $0 \le k \le t$, we have:
	\begin{equation}\label{eq:DDP-multi}
        \Omega_\qb(\cP,t \ts ;z,k)^2 \ \geqslant_\qb \ \Omega_\qb(\cP,t \ts ; z,k-1) \. \cdot \. \Omega_\qb(\cP,t \ts ; z,k+1).
	\end{equation}
More generally, for every integer \ts $a,b \ge 1$\ts, we have:
	\begin{equation}\label{eq:DDP-multi-gen}
        \Omega_\qb(\cP,t \ts ;z,k+a) \. \cdot \. \Omega_\qb(\cP,t \ts ;z,k+b) \ \geqslant_\qb \ \Omega_\qb(\cP,t \ts ; z,k) \. \cdot \. \Omega_\qb(\cP,t \ts ; z,k+a+b).
	\end{equation}
\end{thm}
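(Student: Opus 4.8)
The plan is to reuse, essentially verbatim, the distributive lattice $\LL=(\cL,\vee,\wedge)$ and the four characteristic functions $\alpha,\beta,\gamma,\delta\colon\cL\to\Rb_+$ from the proof of Theorem~\ref{t:DDP} (with $a,b\ge 1$ fixed as there), and to feed them into the \emph{multivariate} AD inequality (Theorem~\ref{thm:multi-q-AD}) with $X=Y=\cL$, rather than into the ordinary AD inequality. Since $\cL\vee\cL=\cL\wedge\cL=\cL$, the conclusion~\eqref{eq:AD-multi} will then compare products of weighted sums over $\cL$. The four-functions hypothesis~\eqref{eq:AD-condition} needed by Theorem~\ref{thm:multi-q-AD} is exactly~\eqref{eq:AD-cond}, which is already verified in the proof of Theorem~\ref{t:DDP}, so that step requires nothing new.

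The only genuine choice is that of the modular functions $r_1,\ldots,r_n$. The naive guess $r_i(T):=T(x_i)$ is inadmissible, because a modular function must take values in $\nn$ on all of $\cL$, whereas $\cL$ consists of maps $T\colon X\to\{-b,\ldots,t\}$ that may be negative. I would instead take $r_i(T):=T(x_i)+b$ for $i\in[n]$; this is still modular, since adding a constant respects the identity $\max\{S(x_i),T(x_i)\}+\min\{S(x_i),T(x_i)\}=S(x_i)+T(x_i)$, and it is nonnegative on $\cL$ because $T(x_i)\ge -b$. Writing $\qb=(q_1,\ldots,q_n)$, $\rb=(r_1,\ldots,r_n)$ and $\qb^{b\mathbf 1}:=q_1^b\cdots q_n^b$, one has $\qb^{\rb(T)}=\qb^{b\mathbf 1}\cdot q_1^{T(x_1)}\cdots q_n^{T(x_n)}$ for every $T\in\cL$.

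The next step is to identify the four quantities $\alpha_{\lqr}(\cL),\ldots,\delta_{\lqr}(\cL)$. On the support of $\alpha$ (resp.\ $\gamma$), the map $T$ is already a $\cP$-partition with values in $\{0,\ldots,t\}$ and $T(z)=k$ (resp.\ $T(z)=k+a$), so
\[ \alpha_{\lqr}(\cL) \,=\, \qb^{b\mathbf 1}\,\Omega_\qb(\cP,t\ts;z,k), \qquad \gamma_{\lqr}(\cL) \,=\, \qb^{b\mathbf 1}\,\Omega_\qb(\cP,t\ts;z,k+a). \]
On the support of $\beta$ (resp.\ $\delta$), the substitution $A(x):=T(x)+b$ of~\eqref{eq:Gra-beta}--\eqref{eq:Gra-delta} turns $T$ into a $\cP$-partition with values in $\{0,\ldots,t\}$ and $A(z)=k+a+b$ (resp.\ $A(z)=k+b$), and at the same time $\qb^{\rb(T)}=q_1^{A(x_1)}\cdots q_n^{A(x_n)}$, so the shift absorbs the prefactor and
\[ \beta_{\lqr}(\cL) \,=\, \Omega_\qb(\cP,t\ts;z,k+a+b), \qquad \delta_{\lqr}(\cL) \,=\, \Omega_\qb(\cP,t\ts;z,k+b). \]

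Finally, plugging these into~\eqref{eq:AD-multi} and cancelling the monomial $\qb^{b\mathbf 1}$ that occurs on both sides (cancellation of a monomial preserves the coefficientwise relation $\geqslant_\qb$) gives exactly~\eqref{eq:DDP-multi-gen}; the inequality~\eqref{eq:DDP-multi} is the special case $a=b=1$ applied with $k$ replaced by $k-1$ (the boundary value $k=0$ being trivial, as the right-hand side then vanishes). I do not anticipate a real obstacle: the argument runs parallel to the proof of Theorem~\ref{t:DDP}, with Theorem~\ref{thm:multi-q-AD} replacing Theorem~\ref{thm:AD}. The only points demanding care are the inadmissibility of $r_i(T)=T(x_i)$, which forces the shift by $b$, and the accompanying bookkeeping of the prefactor $\qb^{b\mathbf 1}$, which luckily enters symmetrically and cancels.
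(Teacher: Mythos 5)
Your proposal is correct and matches the paper's proof essentially verbatim: the same lattice $\cL$ and indicator functions $\alpha,\beta,\gamma,\delta$ from the proof of Theorem~\ref{t:DDP} are fed into Theorem~\ref{thm:multi-q-AD} via the modular functions $r_i(T)=T(x_i)$, with the shift by $b$ entering as a common factor that cancels. Your refinement of taking $r_i(T)=T(x_i)+b$ is a genuine (if minor) improvement in rigor: the paper simply uses $r_i(T)=T(x_i)$, which is negative on part of $\cL$ and hence technically not $\nn$-valued as Theorem~\ref{thm:multi-q-AD} requires, and silently works with Laurent monomials (writing $(q_1\cdots q_n)^{-b}$ as a prefactor on $\beta_{\lqr}$ and $\delta_{\lqr}$); your shifted version keeps everything inside $\nn$ and arrives at the same cancellation.
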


\smallskip

Note that in contrast with the DPP inequality~\eqref{eq:DDP}, the generalized
log-concavity \eqref{eq:DDP-multi-gen} does not follow from the (usual)
log-concavity~\eqref{eq:DDP-multi} via telescoping.

\smallskip

\begin{proof}
	Let \. $\cL, \alpha,\beta,\gamma, \delta$ \. be as in the proof of Theorem~\ref{t:DDP}.
	Note that these functions satisfy the assumption \eqref{eq:AD-condition} of the
	multivariate AD inequality~\eqref{eq:AD-multi}.
	%
	For all \ts $1\le i \le n$,
	let \. $r_i:\cL \to \Rb_+$ \.
	be the modular function given by \. $r_i(A) \. := \. A(x_i)$, where \ts $A\in \cL$. 	
	Then:
	\begin{alignat*}{2}
		\alpha_{\lqr}(\cL) \ &= \    \Omega_{\qb}(\cP,t \ts ;z,k), \qquad
	&&	\beta_{\lqr}(\cL)  \  = \   \Omega_{\qb}(\cP,t \ts ;z,k+a+b) \. \cdot \. (q_1\cdots q_n)^{-b},\\
		\gamma_{\lqr}(\cL) \ &= \   \Omega_{\qb}(\cP ,t \ts ;z,k+a), \qquad
	&&	\delta_{\lqr}(\cL) \ = \    \Omega_{\qb}(\cP,t \ts ;z,k+b) \. \cdot \. (q_1\cdots q_n)^{-b}.
	\end{alignat*}
	The second part of the theorem now follows from the multivariate AD inequality \eqref{eq:AD-multi}, and thus also the first part (which is a special case).
%
\end{proof}

\smallskip

\begin{rem}\label{r:DDP-multi}  In the context of Remark~\ref{r:LP-gen},
Theorem~\ref{t:DDP-multi} holds  by the same argument
if the order-preserving functions are replaced
with the strict order-preserving functions.  This approach can be extended
to	general $\mathbb{T}$-labelled $(\cP,O)$ tableaux.
However, the analogue of \eqref{eq:DDP-multi} does not hold if \ts
$\Omega_{\qb}$ \ts is replaced with \ts $\rK_{\bbz}$.
This is because the weight functions for $\rK_{\bbz}$ is not invariant
under the translation transformation used in the equations \eqref{eq:Gra-beta}
and \eqref{eq:Gra-delta} in the proof of Theorem~\ref{t:DDP}.
\end{rem}

\smallskip

\subsection{Log-concavity of the multivariate order polynomial}\label{ss:DPP-OP}
The following corollary follows immediately from Theorem~\ref{t:DDP-multi},
and can be viewed as a multivariate generalization of \cite[Thm~4.7]{CPP},
and a poset generalization of the first formula in the proof of Lemma~6.13
in \cite[p.~550]{LPR}.

\smallskip

\begin{cor}\label{c:DPP-multi}
	Let \ts $\cP=(X,\prec)$ \ts be a finite poset, and let \ts $t\in \nn_{\ge 1}$ \ts be a positive integer.
	Then:
	\[ \Omega_{\qb}(\cP,t)^2 \ \geqslant_\qb \ \Omega_{\qb}(\cP,t-1) \. \cdot \. \Omega_{\qb}(\cP,t+1). \]
More generally, for every integers \ts $a, b \ge 1$, we have:
	\[ \Omega_{\qb}(\cP,t+a) \. \cdot \.  \Omega_{\qb}(\cP,t+b)  \ \geqslant_\qb \ \Omega_{\qb}(\cP,t) \. \cdot \. \Omega_{\qb}(\cP,t+a+b). \]
\end{cor}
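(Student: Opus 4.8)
The plan is to derive Corollary~\ref{c:DPP-multi} directly from the multivariate DDP inequality (Theorem~\ref{t:DDP-multi}) by the elementary device of recording a capped value as the value of an adjoined maximal element. Given a finite poset $\cP=(X,\prec)$ with natural labeling $X=\{x_1,\ldots,x_n\}$, I would form the augmented poset $\cP':=\cP\oplus \cC_1$, obtained by adjoining one new element $z$ lying above every element of $X$, and equip it with the variables $\qb':=(q_1,\ldots,q_n,q_z)$. For $0\le k\le T$, any $\cP'$-partition $B:X\cup\{z\}\to\{0,\ldots,T\}$ with $B(z)=k$ automatically satisfies $0\le B(x)\le k$ for all $x\in X$, because $x\prec z$ in $\cP'$; hence $B\mapsto B|_X$ is a bijection from $\PP(\cP',T\ts;z,k)$ onto $\PP(\cP,k)$. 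Tracking the weights through this bijection (the factor $q_z^{B(z)}=q_z^{k}$ is constant) gives
\[
\Omega_{\qb'}(\cP',T\ts;z,k)\big|_{q_z=1}\ =\ \Omega_{\qb}(\cP,k)\qquad\text{for every}\ \ 0\le k\le T.
\]

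Next I would apply Theorem~\ref{t:DDP-multi} to the poset $\cP'$, the element $z$, the cap $T:=t+a+b$, and the shift $k:=t$; this is legitimate since $0\le t\le T$ and $t+a,\,t+b,\,t+a+b$ are all at most $T$. Inequality \eqref{eq:DDP-multi-gen} then asserts a relation $\geqslant_{\qb'}$ among the four polynomials $\Omega_{\qb'}(\cP',T\ts;z,\cdot\,)$. Specializing $q_z\gets 1$ sends nonnegative-coefficient polynomials to nonnegative-coefficient polynomials, so it preserves $\geqslant$; combining this with the displayed identity yields
\[
\Omega_{\qb}(\cP,t+a)\. \cdot\. \Omega_{\qb}(\cP,t+b)\ \geqslant_{\qb}\ \Omega_{\qb}(\cP,t)\. \cdot\. \Omega_{\qb}(\cP,t+a+b),
\]
which is the general assertion of the corollary. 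The plain log-concavity statement $\Omega_{\qb}(\cP,t)^2\geqslant_{\qb}\Omega_{\qb}(\cP,t-1)\cdot\Omega_{\qb}(\cP,t+1)$ is then the special case $a=b=1$ with $t$ replaced by $t-1$, which is permissible because $t\ge 1$.

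There is no genuine obstacle here: essentially all the content is already in Theorem~\ref{t:DDP-multi}, and what remains is the one-line verification of the bijection $\PP(\cP',T\ts;z,k)\cong\PP(\cP,k)$, the observation that it is weight-preserving once $q_z\gets 1$, and the fact that $q_z\gets 1$ is a legal specialization for $\geqslant$. For completeness I would also emphasize, in line with Remark~\ref{r:DDP-multi}, that the same reduction does \emph{not} go through with $\rK_{\bbz}$ in place of $\Omega_{\qb}$, because the level-cardinality statistics weighting $\rK_{\bbz}$ are not invariant under the translation of values that is implicit in passing from a cap to a maximal-element value (and explicit in equations~\eqref{eq:Gra-beta} and~\eqref{eq:Gra-delta} in the proof of Theorem~\ref{t:DDP}).
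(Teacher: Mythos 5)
Your proof is correct and follows essentially the same route as the paper: adjoin a new maximal element $z$ to form $\cP'=\cP\oplus z$, observe that fixing $B(z)=k$ caps the remaining values so that $\Omega_{\qb'}(\cP',T\ts;z,k)=\Omega_{\qb}(\cP,k)\cdot q_z^{k}$, and then apply Theorem~\ref{t:DDP-multi} to $\cP'$. The only cosmetic difference is that you remove the extraneous variable by the specialization $q_z\gets 1$, whereas in the paper's version the $q_{n+1}^{\ell}$ factors cancel outright because $(k+a)+(k+b)=k+(k+a+b)$; either way the argument is sound.
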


\smallskip

\begin{proof}
	Let \. $n:=|X|$.  Let \. $\cP':=\cP \oplus z$ \. be the linear sum of \ts $\cP$ \ts and an extra element~$z$,
which is the unique maximal element in~$\cP'$. Since we use natural labeling, element \ts $z$ corresponds to
the variable \ts  $q_{n+1}$\ts.

Note that for every \ts $\ell,t \in \nn$\ts, we have:
	\begin{align}\label{eq:subs}
		\Omega_{\qb}(\cP',t \ts ;z,\ell) \ &= \ \Omega_{\qb}(\cP,\ell) \. \cdot \. q_{n+1}^\ell.
	\end{align}
On the other hand, it follows from applying Theorem~\ref{t:DDP-multi} to \ts $\cP'$ \ts that
$$
        \Omega_\qb(\cP',t \ts ;z,k+a) \. \cdot \. \Omega_\qb(\cP',t \ts ;z,k+b) \ \geqslant_\qb \ \Omega_\qb(\cP',t \ts ; z,k) \. \cdot \. \Omega_\qb(\cP',t \ts ; z,k+a+b).
$$
The corollary now follows by applying \eqref{eq:subs} to the equation above.
\end{proof}

\smallskip

\begin{rem}\label{r:OP-gen}
Our proof of the \ts $\qb=\textbf{1}$ \ts version in \cite[Thm~4.7]{CPP} goes along similar
lines, but uses the FKG rather than the AD~inequality.  Note that our \cite[Thm~4.8]{CPP}
gives a strict log-concavity for order polynomials, with a substantially more involved proof.
\end{rem}

\medskip

\section{Cross--product inequality for $\cP$-partitions}\label{s:CPC}

\subsection{The statement} \label{ss:CPC-thm}
Let \ts $\cP=(X,\prec)$ \ts be a poset \ts on \ts
$|X|=n$ \ts elements.  Fix \ts $t\ge 0$ \ts and distinct elements \ts $x,y,z \in X$.
For  integers \ts $k,\ell \geq 0$\ts, denote by
$$\SPP(\cP,t \ts ;x,y,z \ts ;k,\ell) \ := \ \big\{A\in \PP(\cP,t) \ : \ A(y)-A(x)=k \ \ \text{and} \ \ A(z)-A(y)=\ell\big\}.
$$
Denote
\[  \Lambda_q(k,\ell) \, := \, \sum_{A\ts \in \ts \SPP(\cP, \ts t \ts ;\ts x,y,z \ts ;\ts k,\ell)}  q^{|A|}\., \qquad
\Lambda_{\qb}(k,\ell) \, := \, \sum_{A\ts \in \ts \SPP(\cP, \ts t \ts ;\ts x,y,z \ts ;\ts k,\ell)} q_1^{A(x_1)}  \cdots \. q_n^{A(x_n)}\.,
\]
and let \. $\aF(k,\ell) \. :=\. \Lambda_1(k,\ell) \. = \. \big|\SPP(\cP,t \ts ;x,y,z \ts ;k,\ell)\big|$.

\smallskip

\begin{thm}[{\rm \defn{Cross-product inequality for $\cP$-partitions}}{}]
\label{thm:CPC-OP}
	Let \ts $\cP=(X,\prec)$ \ts be a finite poset, let $x,y,z \in \cP$, and let \ts $t\in \nn_{\ge 1}$ \ts be a positive integer.
Then, for every \ts $k,\ell \geq 0$\ts, we have:
\begin{equation}\label{eq:CPC-OP-1}
 \aFr(k,\ell+1) \. \cdot \.  \aFr(k+1,\ell) \ \geq \  \aFr(k,\ell) \. \cdot \.  \aFr(k+1,\ell+1).
\end{equation}
More generally:
\begin{equation}\label{eq:CPC-OP-q}
 \Lambda_{q}(k,\ell+1) \. \cdot \. \Lambda_{q}(k+1,\ell) \ \geqslant_q \ \Lambda_{q}(k,\ell) \. \cdot \. \Lambda_{q}(k+1,\ell+1).
\end{equation}
Even more generally:
\begin{equation}\label{eq:CPC-OP}
 \Lambda_{\qb}(k,\ell+1) \. \cdot \. \Lambda_{\qb}(k+1,\ell) \ \geqslant_\qb \ \Lambda_{\qb}(k,\ell) \. \cdot \. \Lambda_{\qb}(k+1,\ell+1).
\end{equation}
\end{thm}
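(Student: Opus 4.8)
I would prove the most general statement \eqref{eq:CPC-OP}, from which \eqref{eq:CPC-OP-q} follows by the substitution $\qb\gets(q,\ldots,q)$ and \eqref{eq:CPC-OP-1} by $\qb\gets\mathbf{1}$, exactly as in $\S$\ref{ss:P-part-main}. A preliminary remark: since $k,\ell\ge 0$, every $A$ in any of the sets $\SPP(\cP,t\ts;x,y,z\ts;\cdot,\cdot)$ occurring in the theorem satisfies $A(x)\le A(y)\le A(z)$, so these sets — and their generating functions $\Lambda_\qb$ — are unchanged if we replace $\cP$ by the transitive closure of $\cP$ together with the relations $x\prec y\prec z$. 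Thus we may assume $x\prec y\prec z$ in $\cP$. The plan is then to deduce \eqref{eq:CPC-OP} from the multivariate AD inequality (Theorem~\ref{thm:multi-q-AD}), following the template of the proof of the DDP inequality (Theorem~\ref{t:DDP}).

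Concretely, one takes the distributive lattice $\cL$ of order-preserving maps $T: X\to\{-1,0,\ldots,t\}$ (the value range of $\cP$-partitions extended downward so as to make room for a shift, exactly as in the proof of Theorem~\ref{t:DDP}), with coordinatewise $\vee$ and $\wedge$, and modular functions $r_i(T):=T(x_i)$, so that $\qb^{\rb(T)}=q_1^{T(x_1)}\cdots q_n^{T(x_n)}$ matches the weight in $\Lambda_\qb$. One then chooses four characteristic functions $\alpha,\beta,\gamma,\delta:\cL\to\Rb_+$, each selecting the maps with a prescribed pair of differences $\big(A(y)-A(x),\,A(z)-A(y)\big)$ together with a one-sided range restriction: $\alpha$ is attached to the configuration $(k,\ell)$ and carries the restriction $T\ge 0$; $\gamma$ to $(k,\ell+1)$ with $T\ge 0$; $\beta$ to $(k+1,\ell+1)$ with $T\le t-1$; $\delta$ to $(k+1,\ell)$ with $T\le t-1$. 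The relevant stability facts are that the one-sided difference conditions $A(y)-A(x)\ge k$ and $A(z)-A(y)\ge\ell$ (and the restriction $T\ge 0$) are preserved under $\vee$, while $A(y)-A(x)\le k+1$, $A(z)-A(y)\le\ell+1$ (and $T\le t-1$) are preserved under $\wedge$; in particular $\{A:\,A(y)-A(x)\in\{k,k+1\},\ A(z)-A(y)\in\{\ell,\ell+1\}\}$ is a sublattice of the $\cP$-partition lattice. Granting that the four functions are calibrated so as to satisfy the hypothesis \eqref{eq:AD-condition}, the remaining step is the bookkeeping computation of the four totals: translating the supports of $\beta$ and $\delta$ by $T'(w)\gets T(w)+1$ moves them into the range $\{0,\ldots,t\}$ at the cost of a common monomial $(q_1\cdots q_n)^{-1}$, and one reads off $\alpha_{\lqr}(\cL)\propto\Lambda_\qb(k,\ell)$, $\gamma_{\lqr}(\cL)\propto\Lambda_\qb(k,\ell+1)$, $\beta_{\lqr}(\cL)\propto\Lambda_\qb(k+1,\ell+1)$, $\delta_{\lqr}(\cL)\propto\Lambda_\qb(k+1,\ell)$, with the monomials on the $\beta$/$\delta$ side cancelling; applying \eqref{eq:AD-multi} then gives \eqref{eq:CPC-OP}. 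This calculation parallels \eqref{eq:Gra-beta}--\eqref{eq:Gra-delta} and is routine.

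The hard part is verifying the hypothesis \eqref{eq:AD-condition}, namely that $\alpha(S)=\beta(T)=1$ forces $\gamma(S\vee T)=1$ and $\delta(S\wedge T)=1$, and here the construction must be more delicate than in the DDP case. In the proof of Theorem~\ref{t:DDP} the selected quantity is a single coordinate $T(z)$, so $[S\vee T](z)=\max\{S(z),T(z)\}$ is automatically the larger of the two prescribed values and $[S\wedge T](z)$ the smaller; here the two selected quantities are the \emph{differences} $A(y)-A(x)$ and $A(z)-A(y)$, and the difference of two coordinatewise maxima need not equal the maximum of the two differences. Moreover, in the grid of configurations the pair $\{(k,\ell),(k+1,\ell+1)\}$ is a chain while $\{(k,\ell+1),(k+1,\ell)\}$ is an antichain, so ``configuration'' is not a lattice homomorphism; consequently the four functions \emph{cannot} simply be the characteristic functions of the four sets $\SPP(\cP,t\ts;x,y,z\ts;\cdot,\cdot)$ inside the plain $\cP$-partition lattice — with that choice $S\vee T$ may land in configuration $(k,\ell)$ or $(k+1,\ell+1)$ and \eqref{eq:AD-condition} fails. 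The purpose of the downward-extended range (and the accompanying shift) is precisely to supply the slack that forces, for $S,T$ in the respective supports, the join into configuration $(k,\ell+1)$ and the meet into $(k+1,\ell)$ rather than back into $(k,\ell)$ and $(k+1,\ell+1)$; simultaneously one checks that $S\vee T$ and $S\wedge T$ remain order-preserving and within range, where the assumption $x\prec y\prec z$ is used. Calibrating the four functions and the range extension so that this forcing genuinely happens is the technical core of the argument, and I expect essentially all of the difficulty to lie in this step — which is also why the inequality is new already in the unweighted case \eqref{eq:CPC-OP-1}.
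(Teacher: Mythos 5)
Your overall skeleton is right — reduce to the multivariate case, apply the multivariate AD inequality with $\alpha,\beta,\gamma,\delta$ the characteristic functions of configurations $(k,\ell),(k+1,\ell+1),(k,\ell+1),(k+1,\ell)$, and take modular functions $r_i(T)=T(x_i)$ — and you correctly diagnose exactly where the DDP template breaks: the selected quantity here is a \emph{difference} $T(y)-T(x)$, and coordinatewise $\max$/$\min$ does not commute with taking differences, so with the plain $\cP$-partition lattice the join of an $\alpha$-point and a $\beta$-point can land in configuration $(k,\ell)$ or $(k+1,\ell+1)$ rather than $(k,\ell+1)$, and \eqref{eq:AD-condition} fails.

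The gap is in the proposed fix. You suggest extending the value range to $\{-1,\ldots,t\}$, keeping coordinatewise $\vee,\wedge$, and attaching one-sided range restrictions ($T\ge 0$ to $\alpha,\gamma$; $T\le t-1$ to $\beta,\delta$) with a shift $T\mapsto T+1$ in the final bookkeeping, hoping this ``supplies the slack'' to force the join/meet into the off-diagonal configurations. This cannot work: translating a function by a constant leaves all differences $T(y)-T(x)$, $T(z)-T(y)$ unchanged, and the one-sided range conditions $T\ge 0$, $T\le t-1$ are independent of those differences, so they impose no constraint whatsoever on which configuration $S\vee T$ or $S\wedge T$ lands in. A small example on the chain $x\prec y\prec z$ with $k=\ell=0$, $t=3$, $S=(0,0,0)$, $T=(0,1,2)$ already defeats the scheme: $S$ is in $\alpha$'s support and $T$ in $\beta$'s, but $S\vee T=T$ has configuration $(1,1)$ and $S\wedge T=S$ has configuration $(0,0)$. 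The range extension is the right tool for Theorem~\ref{t:DDP}, where the constrained quantity is a single coordinate $T(z)$ and the shift interacts with the absolute bounds; here it is inert.

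What actually makes the argument work in the paper is a different choice of \emph{lattice structure}, not a different choice of functions or range. One keeps $\cL$ as the plain set of order-preserving maps $X\to\{0,\ldots,t\}$, but equips it with Shepp's lattice operations from his proof of the XYZ inequality:
\begin{align*}
	[S\vee T](w) \ &:= \ \max\big\{S(w)-S(y), \, T(w)-T(y)\big\} \ + \ \min\big\{S(y), T(y)\big\}, \\
	[S\wedge T](w) \ &:= \ \min\big\{S(w)-S(y), \, T(w)-T(y)\big\} \ + \ \max\big\{S(y), T(y)\big\}.
\end{align*}
This is distributive (Shepp, \cite[Eq.~2.4, 2.5]{She}) and is designed precisely so that the differences-from-$y$ behave like single coordinates: $[S\vee T](z)-[S\vee T](y)=\max\{S(z)-S(y),T(z)-T(y)\}$ and $[S\vee T](x)-[S\vee T](y)=\max\{S(x)-S(y),T(x)-T(y)\}$, and dually for $\wedge$. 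With your four characteristic functions (no range extension, no shift) the verification of \eqref{eq:AD-condition} is then a one-line computation: $\max\{-k,-k-1\}=-k$, $\max\{\ell,\ell+1\}=\ell+1$, $\min\{-k,-k-1\}=-k-1$, $\min\{\ell,\ell+1\}=\ell$. One also checks $r_i(S\vee T)+r_i(S\wedge T)=S(x_i)+T(x_i)$, so $r_i$ remains modular for this lattice, and the four totals read off as $\Lambda_\qb(k,\ell)$, $\Lambda_\qb(k+1,\ell+1)$, $\Lambda_\qb(k,\ell+1)$, $\Lambda_\qb(k+1,\ell)$ directly. The reduction to $x\prec y\prec z$ and the range extension are both unnecessary. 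In short: the missing ingredient is Shepp's lattice, which was the same device that made the XYZ inequality accessible to FKG-type arguments in the first place.
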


\smallskip

\begin{rem}\label{r:CPC}
Note that already the unweighted inequality \eqref{eq:CPC-OP-1} appears to be new.
Note also that if the order-preserving functions are replaced with linear extensions,
then a version of \eqref{eq:CPC-OP-1} is known as the
\defng{cross--product conjecture}~\cite[Conj~3.1]{BFT}, a major open problem
in the area.  We refer to \cite{CPP22a} for an extensive discussion and
further references.
\end{rem}

\smallskip

\subsection{Proof of Theorem~\ref{thm:CPC-OP}} \label{ss:CPC-proof}
We denote by \. $\LL=(\cL,\vee,\wedge)$ \. the distributive lattice on the set
of order-preserving functions from $X$ to $\{0,1,\ldots,t\}$:
\[  \cL \ := \ \big\{T: X \to \{0,1,\ldots,t\} \ \, :  \ \, T(v) \leq T(w) \ \ \forall \. v,w\in X \ \
\text{s.t.} \, \ v\prec w\big\}.
\]
The join and meet operation  are given by
\begin{align*}
	[S\vee T](w) \ &:= \ \max\big\{S(w)-S(y), \, T(w)-T(y)\big\} \ + \ \min\big\{S(y), T(y)\big\}, \\
	[S\wedge T](w) \ &:= \ \min\big\{S(w)-S(y), \, T(w)-T(y)\big\} \ + \ \max\big\{S(y), T(y)\big\},
\end{align*}
for every \ts $w \in X$.
This lattice was proved distributive by Shepp \cite[Eq.~2.4, 2.5]{She}, in his proof of the \defng{$XYZ$
inequality} (see also \cite[$\S$6.4]{AS}).

Let \. $\alpha,\beta,\gamma, \delta: \cL \to \Rb_{+}$ \. be characteristic function of subsets of \. $\cL$ \. defined as follows:
 \begin{align*}
 	\alpha \ &:= \ \mathbf{1}\big\{ T(y)-T(x)=k \ \text{ and } \ T(z)-T(y) =\ell \big\},\\
 	\beta \ &:= \ \mathbf{1}\big\{ T(y)-T(x)=k+1 \ \text{ and } \ T(z)-T(y) =\ell+1 \big\},\\
 	\gamma \ &:= \ \mathbf{1}\big\{ T(y)-T(x)=k \ \text{ and } \ T(z)-T(y) =\ell+1 \big\},\\
 	\delta \ &:= \ \mathbf{1}\big\{ T(y)-T(x)=k+1 \ \text{ and } \ T(z)-T(y) =\ell \big\}.
 \end{align*}

	We will now verify the assumption \eqref{eq:AD-condition} of the multivariate AD inequality:
\begin{equation}\label{eq:AD-cond-CPC-proof}
	\alpha(S) \.\cdot \. \beta(T) \ \leq \ \gamma(S \vee T) \.\cdot \. \delta(S \wedge T),
\end{equation}
for every \. $S,T \in \cL$. Without loss of generality we can assume that \. $\alpha(S)=\beta(T)=1$.
We have:
\begin{align*}
	[S\vee T](x) \, - \, [S\vee T](y) \ &= \  \max\{S(x)-S(y), \, T(x)-T(y)\} \ = \ \max\{-k, -k-1\} \ = \  -k, \\
	[S\vee T](z) \, - \, [S\vee T](y) \ &= \  \max\{S(z)-S(y), \, T(z)-T(y)\} \ = \ \max\{\ell, \ell+1\} \ = \  \ell+1,\\
	[S\wedge T](x) \, - \, [S\wedge T](y) \ &= \  \min\{S(x)-S(y), \, T(x)-T(y)\} \ =  \ \min\{-k, -k-1\} \ = \  -k-1, \\
	[S\wedge T](z) \, - \, [S\wedge T](y) \ &= \  \min\{S(z)-S(y), \, T(z)-T(y)\} \ = \ \min\{\ell, \ell+1\} \ = \  \ell.
\end{align*}
This shows that \. $\gamma(S \vee T) = \delta(S \wedge T)=1$ and proves \eqref{eq:AD-cond-CPC-proof}.

\smallskip

Finally, consider modular functions \. $r_i:\cL \to \Rb_+$\ts, for all \ts $1\le i \le n$,
given by \. $r_i(T) \. := \. T(x_i)$.  Then we have:
\begin{alignat*}{2}
	\alpha_{\lqr}(\cL) \ &= \    \Lambda_{\qb}(k,\ell), \qquad
	&& \beta_{\lqr}(\cL)  \  = \  \Lambda_{\qb}(k+1,\ell+1),\\
	\gamma_{\lqr}(\cL) \ &= \   \Lambda_{\qb}(k,\ell+1) \qquad
	&& \delta_{\lqr}(\cL) \ = \    \Lambda_{\qb}(k+1,\ell).
\end{alignat*}
The  theorem now follows from the multivariate AD inequality \eqref{eq:AD-multi}.
\qed

\smallskip

\begin{rem}
Let us also mention that the proof in \cite[$\S$3.1]{CPP22a} shows that Theorem~\ref{thm:CPC-OP}
implies a (multivariate) $\cP$-partition version of the \defng{Kahn--Saks inequality} \cite[Thm~2.5]{KS}.
On the other hand, while the KS~inequality easily implies Stanley's inequality discussed earlier in
Remark~\ref{r:Graham} (see e.g.\ \cite[$\S$1.2]{CPP21}),
the multivariate DPP~inequality (Theorem~\ref{t:DDP-multi}) does not similarly follow from
cross--product inequality for $\cP$-partitions (Theorem~\ref{thm:CPC-OP}).  This is also
demonstrated by the fact that different lattices are used in the proofs of the two theorems.
\end{rem}

\medskip

\section{Final remarks and open problems} \label{s:finrem}

\subsection{} \label{ss:finrem-origin}
This paper grew out of \cite[$\S$4.1]{CP3} where we obtained
superficially similar correlation inequalities which appear to
have a very different nature and whose only known proof uses
the combinatorial atlas technology.  Our investigation was also
partly motivated by the desire to bridge the gap between the
two areas of combinatorics.  Notably, we would like to emphasize
the importance of the AD~inequality to algebraic combinatorics,
and the multivariate weighting to poset theory.

Note that there is a weighted version of \ts $e(\cP)$ \ts introduced
in \cite[$\S$1.16]{CP1}.  While the results in \cite{CP3}
translate verbatim to the weighted setting, these weights seem
incompatible with $\qb$-weights in this paper.
Similarly, the $q$-weight on \ts $e(\cP)$ \ts in \cite{CPP22a}
is also of different nature.  On the other hand, the $q$-weighted
order polynomial in \cite{CPP} is exactly \ts $\Om_q(\cP,t)$.

\subsection{} \label{ss:finrem-SP}
One distinguishing feature of poset inequalities is the difficulty
of getting the equality conditions, see e.g.\ \cite[$\S$9.9]{CPP}
for an overview.  We are not aware of any equality conditions for
the inequalities in this paper, proved or conjectured.

Another difficulty is finding a combinatorial interpretation for
the difference of two sides.  This was a major motivation for
our investigation in \cite{CP1}.  We show in \cite[$\S$7.4]{IP}
that the AD inequality \eqref{eq:AD} does not have a combinatorial
interpretation in full generality, in a sense of being in~$\SP$.
Of course, the Lam--Postnikov--Pylyavskyy deep algebraic approach in \cite{LPP07}
(see Remark~\ref{r:LPP}) is even less likely to give a combinatorial
interpretation. We refer to \cite[$\S$6]{Pak-what} for an extensive survey.

Now, the Lam--Pylyavskyy's injective approach in \cite{LP07} shows
that the difference of coefficients on both sides in \eqref{eq:LP-RPP3}
has a combinatorial interpretation.  By contrast, the limit
arguments we use throughout this paper do not give a combinatorial
interpretation for Fishburn's inequality~\eqref{eq:Fish}.  It would
be interesting to see if \eqref{eq:Fish} and the generalized
Fishburn inequality \eqref{eq:main-LE} can be proved by a direct
combinatorial argument giving a combinatorial interpretation.

\vskip.4cm

\subsection*{Acknowledgements}
We are grateful to Thomas Lam, Greta Panova, Pasha Pylyavskyy and
Yair Shenfeld for helpful discussions and remarks on the subject.
The first author was partially supported by the Simons Foundation.
The second author was partially supported by the~NSF.

\vskip.6cm



\begin{thebibliography}{abcdefghk}

\bibitem[AB08]{AB}
Rudolf~Ahlswede and Vladimir~Blinovsky,
\emph{Lectures on advances in combinatorics},
Springer, Berlin, 2008, 314~pp.

\bibitem[AD78]{AD}
Rudolf~Ahlswede and David~E.~Daykin,
An inequality for the weights of two families of sets, their unions and intersections,
\emph{Z.~Wahrsch.\ Verw.\ Gebiete}~\textbf{43} (1978), 183--185.

\bibitem[AS16]{AS}
Noga~Alon and Joel~H.~Spencer, \emph{The probabilistic method} (Fourth~ed.),
John Wiley, Hoboken, NJ, 2016.

\bibitem[Bir67]{Bir}
Garrett~Birkhoff,
\emph{Lattice theory} (third ed.), AMS, Providence, RI, 1967, 418~pp.

\bibitem[Bj\"o11]{Bjo11}
Anders~Bj\"{o}rner, A $q$-analogue of the FKG inequality and some applications,
\emph{Combinatorica}~\textbf{31} (2011), 151--164.

\bibitem[Br\"a15]{Bra}
Petter~Br\"and\'en, Unimodality, log-concavity, real-rootedness and beyond,
in \emph{Handbook of enumerative combinatorics}, CRC Press, Boca Raton,
FL, 2015, 437--483.
i
\bibitem[Bre89]{Bre}
Francesco~Brenti, Unimodal, log-concave and P\'{o}lya frequency sequences in combinatorics,
\emph{Mem.\ AMS}~\textbf{81} (1989), no.~413, 106~pp.

\bibitem[Bre94]{Bre2}
Francesco~Brenti,
Log-concave and unimodal sequences in algebra, combinatorics, and geometry: an update,
in \emph{Jerusalem combinatorics}, AMS, Providence, RI, 1994, 71--89.

\bibitem[BFT95]{BFT}
Graham~R.~Brightwell, Stefan~Felsner and William~T.~Trotter,
Balancing pairs and the cross product conjecture,
\emph{Order}~\textbf{12} (1995), 327--349.

\bibitem[CP21]{CP1}
Swee~Hong~Chan and Igor~Pak, Log-concave poset inequalities, preprint (2021), 71 pp;
{\tt arXiv:2110.} {\tt 10740}.


\bibitem[CP22]{CP3}
Swee~Hong~Chan and Igor~Pak, Correlation inequalities for linear extensions, preprint (2022), 23~pp.;
\ts {\tt arXiv:2211.16637}.

\bibitem[CPP21]{CPP21}
Swee~Hong~Chan, Igor~Pak and Greta~Panova,
Extensions of the Kahn--Saks inequality for posets of width two, to
appear in \emph{Combinatorial Theory}, 25~pp.; \ts  {\tt arXiv:2106.07133}.

\bibitem[CPP22a]{CPP22a}
Swee~Hong~Chan, Igor~Pak and Greta~Panova,
The cross--product conjecture for width two posets,
\emph{Trans.\ AMS}~\textbf{375} (2022), 5923--5961.

\bibitem[CPP22b]{CPP}
Swee~Hong~Chan, Igor~Pak, and Greta Panova, Effective poset inequalities,
preprint (2022), 36~pp; \ts
{\tt arXiv:2205.02798}.

\bibitem[Chr09]{Chri}
Demetres~Christofides,
A $q$-analogue of the four functions theorem, preprint (2009), 6~pp.; \ts
{\tt arXiv:} {\tt 0909.5137}.


\bibitem[CGS11]{CGS}
Allison~Cuttler, Curtis~Greene and Mark~Skandera,
Inequalities for symmetric means,
\emph{European J.\ Combin.}~\textbf{32} (2011), 745--761.

\bibitem[DDP84]{DDP}
David~E.~Daykin,  Jacqueline~W.~Daykin,  and Michael~S.~Paterson,
On log concavity for order-preserving maps of partial orders,
\emph{Discrete Math.}~\textbf{50} (1984), 221--226.

\bibitem[Fis84]{Fish1}
Peter~C.~Fishburn, A correlational inequality for linear
extensions of a poset,
\emph{Order}~\textbf{1} (1984), 127--137.

\bibitem[Fis92]{Fish}
Peter~C.~Fishburn, Correlation in partially ordered sets,
\emph{Discrete Appl.\ Math.}~\textbf{39} (1992), 173--191.

\bibitem[FS98]{FS}
Peter~C.~Fishburn and Lawrence~A.~Shepp,
The Ahlswede--Daykin theorem, in \emph{Numbers, information and complexity},
Kluwer, Boston, MA, 2000, 501--516.

\bibitem[FKG71]{FKG}
Cornelius~M.~Fortuin, Pieter~W.~Kasteleyn and Jean~Ginibre,
Correlation inequalities on some partially ordered sets,
\emph{Comm.\ Math.\ Phys.}~\textbf{22} (1971), 89--103.

\bibitem[Gra83]{Gra}
Ronald~L.~Graham,
Applications of the FKG inequality and its relatives, in
\emph{Mathematical programming: the state of the art},
Springer, Berlin, 1983, 115--131.

\bibitem[Huh18]{Huh}
June~Huh,
Combinatorial applications of the Hodge--Riemann relations, in
\emph{Proc.\ ICM Rio de Janeiro}, vol.~IV, World Sci.,
Hackensack, NJ, 2018, 3093--3111.


\bibitem[HMMS22]{HMMS}
June~Huh, Jacob~P.~Matherne, Karola~M\'esz\'aros and Avery~St.~Dizier,
Logarithmic concavity of Schur and related polynomials,
\emph{Trans.\ AMS}~\textbf{375} (2022), 4411--4427.

\bibitem[HSW22]{HSW}
June~Huh, Benjamin~Schr\"oter and Botong~Wang,
Correlation bounds for fields and matroids,
\emph{Jour.\  Eur.\ Math.\ Soc.}~\textbf{24} (2022), 1335--1351.

\bibitem[IP22]{IP}
Christian~Ikenmeyer and Igor~Pak,
What is in~$\SP$ and what is not?, preprint (2022),
82~pp.; extended abstract in \emph{Proc.\ 63rd FOCS} (2022);
\ts {\tt arXiv:2204.13149}.

\bibitem[KS84]{KS}
Jeff~Kahn and Michael~Saks,
Balancing poset extensions,
\emph{Order}~\textbf{1} (1984), 113--126.


\bibitem[LPP07]{LPP07}
Thomas~Lam, Alexander~Postnikov and Pavlo~Pylyavskyy,
Schur positivity and Schur log-concavity,
\emph{Amer.~J.\ Math.}~\textbf{129} (2007), 1611--1622.

\bibitem[LP07]{LP07}
Thomas~Lam and Pavlo~Pylyavskyy, Cell transfer and monomial positivity,
\emph{J.~Algebraic Combin.}~\textbf{26} (2007), 209--224.

\bibitem[LP08]{LP08}
Thomas~Lam and Pavlo~Pylyavskyy, $P$-partition products and fundamental
quasi-symmetric function positivity,
\emph{Adv.\ in Appl.\ Math.}~\textbf{40} (2008), 271--294.

\bibitem[LPR18]{LPR}
Thomas~Lam, Pavlo~Pylyavskyy, and Reiho~Sakamoto,
Rigged configurations and cylindric loop Schur functions,
\emph{Ann.\ Inst.\ Henri Poincar\'e~D}~\textbf{5} (2018), 513--555.

\bibitem[Mac95]{Mac}
Ian~G.~Macdonald, \emph{Symmetric functions and Hall polynomials}
(Second ed.), Oxford Univ.\ Press, New York, 1995, 475~pp.

\bibitem[Pak22]{Pak-what}
Igor~Pak,
What is a combinatorial interpretation?, preprint (2022), 58~pp.;
in \emph{Open Problems in Algebraic Combinatorics}, to appear; \ts {\tt arXiv:2209.06142}.

\bibitem[Ric10]{Rich}
Donald~St.~P.~Richards,
Log-convexity properties of Schur functions and generalized hypergeometric functions
of matrix argument, \emph{Ramanujan~J.}~\textbf{23} (2010), 397--407.

\bibitem[Sag01]{Sag01}
Bruce~E.~Sagan,
\emph{The symmetric group}, Springer, New York, 2001, 238~pp.


\bibitem[She80]{She}
Lawrence~A.~Shepp,
The FKG inequality and some monotonicity properties of partial orders,
\emph{SIAM J. Algebraic Discrete Methods}~\textbf{1} (1980), 295--299.


\bibitem[Sra16]{Sra}
Suvrit~Sra,
On inequalities for normalized Schur functions,
\emph{European J.\ Combin.}~\textbf{51} (2016), 492--494.

\bibitem[Sta72]{Sta-PP}
Richard~P.~Stanley,
Ordered structures and partitions, \emph{Memoirs of the AMS},
AMS, Providence, RI, 1972, 104~pp.

\bibitem[Sta81]{Sta}
Richard~P.~Stanley,
Two combinatorial applications of the Aleksandrov--Fenchel inequalities,
\emph{J.~Combin.\ Theory, Ser.~A} \textbf{31}  (1981), 56--65.

\bibitem[Sta89]{Sta2}
Richard~P.~Stanley,
Log-concave and unimodal sequences in algebra, combinatorics, and geometry,
in  \emph{Graph theory and its applications}, New York Acad.\ Sci.,
New York, 1989, 500--535.

\bibitem[Sta99]{EC}
Richard~P.~Stanley, {\em Enumerative Combinatorics}, vol.~1 (second edition)
and vol.~2, Cambridge Univ.~Press, 2012 and~1999.

\bibitem[Tro95]{Tro}
Wiliam~T.~Trotter, Partially ordered sets, in \emph{Handbook of combinatorics}, vol.~1, Elsevier,
Amsterdam, 1995, 433--480.

\bibitem[Win86]{Win}
Peter~M.~Winkler, Correlation and order, in \emph{Combinatorics and ordered sets},
AMS, Providence, RI, 1986, 151--174.

\end{thebibliography}
\end{document}